\documentclass[12pt]{article}


\setlength{\textheight}{22cm}
\setlength{\textwidth}{15cm}
\setlength{\oddsidemargin}{-0.4cm}
\setlength{\evensidemargin}{-0.4cm}
\setlength{\topmargin}{-1cm}
\hfuzz=20pt

\newlength{\stefan}
\setlength{\stefan}{\linewidth}
\addtolength{\stefan}{1.0cm}


\usepackage{amsmath, amsthm, amsfonts, makeidx}

\usepackage[usenames,dvipsnames]{color}

\DeclareMathSymbol{\subsetneq}{\mathord}{AMSb}{"26}

\newtheorem{lemma}{Lemma}[section]

\newtheorem{theorem}[lemma]{Theorem}

\newtheorem{proposition}[lemma]{Proposition}
\newtheorem{corollary}[lemma]{Corollary}
\theoremstyle{definition}

\newtheorem{definition}[lemma]{Definition}
\newtheorem{example}[lemma]{Example}

\newtheorem{remark}[lemma]{Remark}

\newtheorem{question}[lemma]{Question}

\newcommand{\lp}{\longrightarrow}

\newcommand{\mb}{\mathbb}

\newcommand{\F}{\mb{F}}
\newcommand{\B}{\textup{B}}

\newcommand{\D}{\mathcal{D}}

\newcommand{\Z}{\mb{Z}}
\newcommand{\N}{\mb{N}}
\newcommand{\Q}{\mb{Q}}

\newcommand{\desda}{\Longleftrightarrow}
\newcommand{\Aff}{\mathit{Aff}}

\renewcommand{\ker}{\operatorname{ker}}

\renewcommand{\deg}{\operatorname{deg}}

\newcommand{\im}{\textup{Im}}

\newcommand{\GA}{\operatorname{GA}}

\renewcommand{\Aff}{\operatorname{Aff}}

\newcommand{\kar}{\textup{char}}
\newcommand{\lcm}{\operatorname{lcm}}
\newcommand{\ord}{\operatorname{ord}}
\newcommand{\inv}{\operatorname{inv}}
\newcommand{\BA}{\operatorname{BAs}}
\newcommand{\lt}{\operatorname{lt}}
\newcommand{\degg}{{\operatorname{deg_L}}}
\newcommand{\BAA}{\operatorname{BA}}
\newcommand{\dlex}{\operatorname{dlex}}

\title{Invariants and conjugacy classes of triangular polynomial maps}
\author{
Stefan Maubach\\ \small Jacobs University Bremen\\
\small
Bremen, Germany \\ \small s.maubach@jacobs-university.de\\}

\begin{document}

\maketitle

\begin{abstract} In this article, we classify invariants and conjugacy classes of triangular polynomial maps. 
We make these classifications in dimension 2 over domains containing $\Q$, dimension 2 over fields of characteristic $p$, and dimension 3 over fields of characteristic zero.
We discuss the generic characteristic 0 case. We determine the invariants and conjugacy classes of strictly triangular maps of maximal order in all dimensions over fields of characteristic $p$. They turn out to be equivalent to a map of the form
$(x_1+f_1,\ldots,x_n+f_n)$ where $f_i\in x_n^{p-1}k[x_{i+1}^p,\ldots,x_n^p]$ if $1\leq i\leq n-1$ and $f_n\in k^*$. 
\end{abstract}

AMS classification:

\section{Introduction}

\label{Sec1}

\subsection{Background}

(For notations and some definitions, please read the next section.)
Triangular polynomial maps are an important class of maps: they are the first nonlinear (nonaffine) polynomial automorphisms one comes up to, and they are a basic building block of many polynomial automorphisms. For one, in dimension two, all automorphisms are compositions of affine and triangular ones. Second, almost all basic examples (like Nagata's automorphism, exponents of locally nilpotent derivatons) are ``almost triangular'' (they are triangular over their invariant ring, or an exponent of a locally nilpotent derivation which is equivalent to a triangular derivation). 

Due to polynomial automorphisms and endomorphisms in general being quite difficult, triangular polynomial maps are often considered trivial. (For example - it's completely trivial to prove the Jacobian Conjecture for triangular polynomial endomorphisms\ldots)
This is deceptive, however: if it is trivial to see that a polynomial is an automorphism, doesn't make it easier to, for example, iterate it, or to find its invariants, or to find its conjugacy class. For all these last questions, there are some reasonably satisfactory answers one can give over fields of characteristic zero, or even rings or domains containing $\Q$ (see section 2). Over fields of characteristic $p$ this becomes much harder already.
It's exactly this characteristic $p$ case, especially the finite field case, which has gained more of an interest, also outside of the field of affine algebraic geometry \cite{Ostafe, Shparlinski, Mau12}.

The overview of this paper is as follows: In section \ref{Sec1} we give background, introduction, definitions etc.
In section \ref{Sec2} we elaborate on the characteristic $0$ and rings-containing-$\Q$ case. We give a link between locally nilpotent derivations, which is reasonably well-known for the invariant case but not that well-known for the image case. 
We determine conjugacy classes in dimension 2 over general rings and in dimension 3 over fields. 
In section \ref{Sec3} we do in all dimensions the equivalent  of the ``locally nilpotent dervation having a slice'' -case for characteristic $p$. 
Since there is no locally nilpotent derivation (or its characteristic $p$ version, a locally iterative higher derivation), this case is truly different, and has a nontrivial answer (whereas the characteristic $0$ case yields ``equivalent to an affine map''). 
We provide a reasonable description of invariants, image and conjugacy classes for this case. This is perhaps the strongest new result of this paper.
In section \ref{Sec4} we determine the dimension 2 case over fields of characteristic $p$. 
In section \ref{Sec5} we briefly discuss automorphisms of finite order, and in section \ref{Sec6} we give further research and acknowledgements.

\tableofcontents
\setcounter{tocdepth}{3}

\subsection{Some notations and basic definitions}

If $R$ is a ring, we will denote $R[x_1,\ldots, x_n]$ as $R^{[n]}$. All rings in this article will be commutative with 1, and most of the time will be domains. We will reserve $k$ for a field.
We define $\GA_n(R)$ as the set of polynomial automorphisms of $R^{[n]}$, and elements 
$F\in \GA_n(R)$ as $F=(F_1,\ldots, F_n)$ where $F_i\in k^{[n]}$. 
$\BAA_n(R)$ is the set of triangular polynomial automorphisms , i.e. where $F_i\in k[x_i,x_{i+1},\ldots, x_n]$. (BA stands for Borel Automorphisms, see \cite{hanoi}.)
It follows that $F_i=a_ix_i+f_i$ where $f_i\in k[x_{i+1},\ldots, x_n]$. 
The group $\BA_n(R)$ is the set of strictly upper triangular polynomial maps, i.e. maps of the form $F=(x_1+f_1,\ldots, x_n+f_n)$ where
$f_i\in k[x_{i+1},\ldots, x_n]$. $\Aff_n(R)$ is the set of affine maps, i.e. compositions of linear maps and translations.

\subsection{Unipotent and triangular maps}
\label{Sec2}

\begin{definition} Let $F\in \GA_n(R)$. Then $F$ is called {\em locally finite} (short LF) if  there exist $d\in \N$ and $a_i\in R$ such that $F^d=\sum_{i=0}^{d-1} a_i F^i$. It follows that $\deg(F^m)$ is bounded. 
In case the polynomial $T^d-\sum_{i=0}^{d-1} a_iT^i =(T-1)^d$, then we say that $F$ is unipotent. 
\end{definition}

(Note: in some articles, LF is called ``algebraic'', see for example \cite{StKr}.)

\begin{example} All elements in $\BAA_n(R)$ are locally finite. The elements in $\BA_n(R)$ are unipotent.
\end{example}

It will be convenient to abbreviate elements in $\BA_n(k)$ which have many identity components, for example
\[ (x_1,\ldots, x_{i-1}, x_i+f_i, x_{i+1},\ldots, x_n)=(x_i+f_i)\]
\[ (x_1,\ldots, x_{i-1}, x_i+f_i, x_{i+1},\ldots,x_{j-1}, x_j+f_j, x_{j+1},\ldots,  x_n)=(x_i+f_i, x_j+f_j)\]
etc. 

If $A\subseteq R^{[n]}$ then we denote $A^F=\{a\in A ~|~F(a)=a\}$. In case $A$ is clear (mostly, meaning $A=R^{[n]}$) then we write $\inv(F)=A^F$. 

In this article, our goal is to understand elements in $\BA_n(k)$ for any field $k$. In particular, we want to understand the following:
\begin{itemize}
\item What are the invariants of some $F\in \BA_n(k)$?
\item What are the conjugacy classes of $\BA_n(k)$ in $\BA_n(k)$?
\item What are the conjugacy classes of $\BA_n(k)$ in $\BAA_n(k)$?
\item What are the conjugacy classes of $\BAA_n(k)$ in $\BAA_n(k)$?
\end{itemize}
These questions will be a bit too ambitious to solve in general - in fact, one can say that even in characteristic zero, the invariants are quite complicated. (See example \ref{Roberts}.)
For completeness sake, we will first discuss the characteristic zero case, after which we will discuss the characteristic $p$ case, which will be more involved. 
We will also consider the above questions over rings (domains), as these sometimes can help us answer the question over fields in one variable higher.

\section{Characteristic zero}

Let $R$ be a domain of characteristic zero (i.e. $\Q\subseteq R$). 

\begin{definition} Let $D:R^{[n]}\lp R^{[n]}$ be an $R$-linear map. Then $D$ is called a derivation if $D(fg)=fD(g)+D(f)g$ for all $f,g\in R^{[n]}$. $D$ is called locally nilpotent if for every $f\in R^{[n]}$, there exists $d\in \N$ such that $D^d(f)=0$. 
$D$ is called triangular if $D(x_i)\in R[x_{i+1},\ldots, x_n]$. 
A slice of $D$ is an element $s\in R^{[n]}$ such that $D(s)=1$. 
\end{definition}

\begin{lemma} \label{lnd} Let $R$ be a domain containing $\Q$. Then $F\in \GA_n(R)$ being unipotent is equivalent to $F=\exp(D)$ for some locally nilpotent derivation $D$. 
\end{lemma}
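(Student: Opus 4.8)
The plan is to prove both implications separately. For the easy direction, suppose $D$ is a locally nilpotent derivation on $R^{[n]}$; I would show $\exp(D) := \sum_{i\geq 0} \frac{1}{i!} D^i$ is a well-defined element of $\GA_n(R)$ (the sum is finite on each generator $x_j$ since $D$ is locally nilpotent, and division by $i!$ is legitimate because $\Q\subseteq R$), that it is a ring endomorphism (this is the standard formal computation $\exp(D)(fg) = \exp(D)(f)\exp(D)(g)$, using the Leibniz rule and the binomial theorem term by term), and that it is invertible with inverse $\exp(-D)$. To see that $\exp(D)$ is unipotent, I would note that $D$ being locally nilpotent forces $D(x_j)$ to lie in a subring where a suitable filtration makes $D$ strictly lower the filtration degree; equivalently, one shows $\deg(\exp(D)^m) = \deg(\exp(mD))$ is bounded, and that $\exp(D)$ satisfies $(\,\exp(D) - \mathrm{id}\,)^d = 0$ for $d$ large enough when restricted to each generator — concretely, $\exp(D)^m = \exp(mD)$, so the orbit $\{\exp(D)^m(x_j)\}_m$ spans a finite-dimensional space on which $\exp(D)$ acts unipotently, giving a characteristic polynomial $(T-1)^d$.

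For the converse, suppose $F\in\GA_n(R)$ is unipotent, say $F^d = \sum_{i=0}^{d-1} a_i F^i$ with $T^d - \sum a_i T^i = (T-1)^d$. The idea is to define $D := \log(F) = -\sum_{j\geq 1} \frac{1}{j}(\mathrm{id}-F)^j$ and show this is a well-defined locally nilpotent derivation with $\exp(D) = F$. Well-definedness: since $F$ is locally finite and unipotent, for each generator $x_\ell$ the $R$-module $M_\ell$ spanned by $\{F^i(x_\ell)\}_{i\geq 0}$ is finitely generated and $F$ acts on it (and on the finitely generated algebra $M$ they generate together with their products, after enlarging appropriately) as a unipotent operator, so $(\mathrm{id}-F)$ is nilpotent on $M_\ell$ and the series for $\log(F)$ terminates. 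The delicate point is that $\log$ of a unipotent operator, applied to an $R$-algebra automorphism, is actually a derivation — this follows from the formal identity relating $\log$ and the fact that $F$ is an algebra homomorphism: one checks $D(fg) = D(f)g + fD(g)$ by expanding, using $(\mathrm{id}-F)(fg) = (\mathrm{id}-F)(f)\cdot g + F(f)\cdot(\mathrm{id}-F)(g)$ and the formal power series identity for $\log$ on a group-like element (this is the standard "$\log$ of grouplike is primitive" argument in the completed universal enveloping / power series setting). Local nilpotence of $D$ is immediate since $D$ is a polynomial in $(\mathrm{id}-F)$ which is nilpotent on each finitely generated invariant piece. Finally $\exp(\log(F)) = F$ by the usual formal inverse-function identity for $\exp$ and $\log$, valid because both are applied within the nilpotent (hence convergent) setting.

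The main obstacle, and the step deserving the most care, is the converse: verifying that $\log(F)$ is genuinely a \emph{derivation} and not merely an $R$-linear endomorphism. The finiteness needed to make the logarithm series terminate rests on $F$ being locally finite (which unipotent implies), so I would first isolate a lemma that for a locally finite $F$, every $f\in R^{[n]}$ lies in a finitely generated $F$-stable $R$-submodule; then the unipotence hypothesis gives nilpotence of $\mathrm{id}-F$ there. Granting that, the derivation property is a formal manipulation, but one must be careful that all the rearrangements of infinite sums are justified — which they are, because on any fixed pair $f,g$ only finitely many terms are nonzero. A secondary point to address is that $\exp(D)\in\GA_n(R)$, i.e. that $\exp(D)$ genuinely maps $R^{[n]}$ to itself and is bijective; this also follows from local nilpotence but should be stated. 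I would also remark that this lemma is essentially classical (going back to the correspondence between $\mathbb{G}_a$-actions and locally nilpotent derivations, cf. standard references on locally nilpotent derivations) and the proof above is the standard one adapted to the ring setting $\Q\subseteq R$.
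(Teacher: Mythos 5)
Your proposal is correct in outline, but it takes a genuinely different route from the paper. You prove both implications from scratch: the forward direction by checking that $\exp(D)$ of a locally nilpotent derivation is an automorphism acting unipotently, and the converse by setting $D=\log(F)$ and invoking the standard ``$\log$ of a grouplike element is primitive'' argument to see that $D$ is a derivation, all series terminating because $\mathrm{id}-F$ is locally nilpotent. The paper instead quotes the field case (the cited Furter--Maubach lemma) applied to the quotient field $k$ of $R$, and the entire content of its proof is a descent statement: the locally nilpotent derivation $D$ with $\exp(D)=F$ produced over $k$ already has coefficients in $R$, because the vector of values $F^m(x_i)=\exp(mD)(x_i)\in R^{[n]}$, $0\le m\le d-1$, is related to $(x_i,D(x_i),\ldots,D^{d-1}(x_i))$ by a Vandermonde-type matrix whose inverse has entries in $\Q\subseteq R$. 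Your approach buys self-containedness and makes both directions explicit; the paper's buys brevity and isolates the one point that is new over a ring, namely integrality of the coefficients of $D$. Two spots in your write-up deserve tightening. First, the paper's unipotence relation $(T-1)^d$ is a relation on the tuple $(F_1,\ldots,F_n)$, i.e.\ on the generators $x_j$, so nilpotence of $\mathrm{id}-F$ on a finitely generated stable submodule containing an arbitrary $f$ is not immediate from the definition; you need the identity $(\mathrm{id}-F)(uv)=(\mathrm{id}-F)(u)\,v+F(u)\,(\mathrm{id}-F)(v)$ to propagate nilpotence to products (this is the content hidden in your ``after enlarging appropriately''). Second, in the forward direction, over a domain it is cleaner to verify $(\exp(D)-\mathrm{id})^d(x_j)=0$ directly from $D^d(x_j)=0$, writing $\exp(D)-\mathrm{id}$ as $D$ times a power series in $D$, rather than arguing via a characteristic polynomial of the $R$-module spanned by the orbit, since that module need not be free and eigenvalue arguments are awkward outside the field case.
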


\begin{proof} In \cite{Fu-Mau} lemma 2.3, the above theorem is proven for the case $R=k$, a field. 
If we let $k$ be the quotient field of $R$, we thus can find some l.n.d. $D$ having coefficents in $k$. 
We will show that $D$ actually has coefficients in $R$. 
We know that for each $m\in \N$, we get $F^m(x_i)=\exp(mD)(mx_i)\in R^{[n]}$. Let $d\in \N$ such that 
 $D^d(x_i)=0$, and let $V=\sum_{m\in \N} R \exp(D)(mx_i) $.
We claim that $D(x_i)\in V$. 
Indeed:
\[ 
\begin{pmatrix}
\exp(0D)(x_i)\\
\exp(D)(x_i)\\
\vdots\\
\exp((d-1)D)(x_i)
\end{pmatrix}
=
M\begin{pmatrix} x_i\\ D(x_i)\\ \vdots \\D^{d-1}(x_i)
\end{pmatrix}\]
where $M$ is some Vandermonde matrix. So indeed $D^j(x_i)\in V$ for each $j$. Since $V\subseteq R^{[n]}$ 
we are done.
\end{proof}

This fact makes the characteristic zero case so understandable. Before we state our main theorem, let us elaborate a bit. 
We have the following well-known theorem:

\begin{lemma} \label{lnd2} Let $D$ be a locally nilpotent derivation on a ring $A$ having a slice $s$. Then 
$A^D=\ker(D)=k[a_1,\ldots, a_n]$ for some $a_i\in A$, and $A=A^D[s]$. \\
Furthermore, $\im(D)=A$ if and only if $D$ has a slice.
\end{lemma}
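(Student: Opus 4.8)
The plan is to prove Lemma~\ref{lnd2} in two halves: first the structural statement about $A^D$ and $A=A^D[s]$ assuming a slice exists, and then the equivalence $\im(D)=A \iff D$ has a slice. For the first half, given a slice $s$ with $D(s)=1$, I would introduce the Dixmier (Bass--Connell--Wright) map $\pi\colon A\to A$ defined by $\pi(a)=\sum_{i\geq 0}\frac{(-1)^i}{i!}D^i(a)\,s^i$, which makes sense because $D$ is locally nilpotent. The key routine verifications are: $\pi$ is a ring homomorphism onto $A^D$, it restricts to the identity on $A^D$, and $D(\pi(a))=0$ for all $a$. Then one shows $a=\sum_{j\geq 0}\pi\!\big(\tfrac{1}{j!}D^j(a)\big)s^j$ (a finite sum), which simultaneously gives $A=A^D[s]$ and that $s$ is transcendental over $A^D$, i.e. $A=A^D[s]$ is a genuine polynomial ring in $s$ over $A^D$. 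For the finite generation $A^D=k[a_1,\ldots,a_n]$: here $A$ is a polynomial ring $R^{[n]}$ (or more generally a finitely generated algebra), so $A^D[s]=A$ is finitely generated, and one invokes the standard fact that if $B[s]$ is a finitely generated $k$-algebra then so is $B$ (this follows from an argument of Eakin/Nagata-type, or directly: pick generators of $A$, write each in terms of a finite set of elements of $A^D$ and powers of $s$, and check those finitely many elements of $A^D$ generate). I should be slightly careful to state the ambient ring precisely, since ``$k[a_1,\ldots,a_n]$'' presupposes $A$ is a $k$-algebra of the expected type; I would phrase it as $A^D$ being a finitely generated $k$-algebra.

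For the second half, the implication ``$D$ has a slice $\Rightarrow \im(D)=A$'' is immediate once the first half is in hand: writing any $a\in A$ as $a=\sum_j b_j s^j$ with $b_j\in A^D$, one has $a=D\!\big(\sum_j \tfrac{b_j}{j+1}s^{j+1}\big)$ since $D$ kills the $b_j$ and $D(s^{j+1})=(j+1)s^j$ (this is where characteristic zero, i.e. $\Q\subseteq R$, is essential — we need to divide by $j+1$). Conversely, suppose $\im(D)=A$. Then in particular $1\in\im(D)$, so there exists $s\in A$ with $D(s)=1$, which is exactly a slice. This direction is essentially trivial.

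The main obstacle — or at least the only place requiring genuine care rather than bookkeeping — is the finite generation of $A^D$. The Dixmier-map computations are standard and I would present them compactly, citing van~den~Essen's book for the slice theorem if a reference is preferred over a self-contained argument. I would organize the write-up as: (1) define $\pi$, list its properties with one-line justifications; (2) derive the expansion $a=\sum \pi(\tfrac{1}{j!}D^j a)s^j$ and conclude $A=A^D[s]$ polynomially; (3) deduce finite generation of $A^D$ from finite generation of $A$; (4) dispatch the $\im(D)=A \iff$ slice equivalence as above. One subtlety worth flagging: the statement as written says ``$A^D=\ker(D)$'', which is a tautology for a derivation on a domain of characteristic zero (indeed $A^D$ \emph{is defined} as $\ker D$ here), so I would not dwell on it; the content is the polynomial-ring structure and finite generation.
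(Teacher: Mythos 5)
Your proof is correct, but note that the paper does not actually prove Lemma \ref{lnd2}: it is quoted as a well-known result (the slice theorem for locally nilpotent derivations), so there is no argument in the paper to compare against. What you give is precisely the standard proof via the Dixmier map $\pi(a)=\sum_{i\geq 0}\frac{(-1)^i}{i!}D^i(a)s^i$, and the steps check out: $\pi$ is a $k$-algebra homomorphism onto $A^D$ restricting to the identity there, the expansion $a=\sum_j \pi\bigl(\tfrac{1}{j!}D^j(a)\bigr)s^j$ gives $A=A^D[s]$, and the equivalence $\im(D)=A$ iff $D$ has a slice is handled correctly (the integration step $a=D\bigl(\sum_j\tfrac{b_j}{j+1}s^{j+1}\bigr)$ uses $\Q\subseteq A$, and $1\in\im(D)$ gives the converse). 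Two small streamlinings: finite generation follows most quickly from $A^D=\pi(A)=k[\pi(x_1),\ldots,\pi(x_n)]$, since $\pi$ is an algebra map and $A$ is finitely generated, so no Eakin--Nagata-type input or coefficient bookkeeping is needed; and the transcendence of $s$ over $A^D$ is not an automatic byproduct of the expansion but follows from the one-line observation that applying $D^m$ to a relation $\sum_{j\leq m}b_js^j=0$ with $b_j\in A^D$, $b_m\neq 0$, yields $m!\,b_m=0$, impossible in a characteristic-zero domain. You are also right to flag that the statement $A^D=k[a_1,\ldots,a_n]$ tacitly assumes $A$ is a finitely generated algebra, as in the paper's intended use $A=R^{[n]}$ with $\Q\subseteq R$.
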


A locally nilpotent derivation having a slice is obviously something very useful for such a derivation, and correspondingly for the map. 
It is conjectured that such a slice $s$ must automatically be a coordinate  (meaning
there exist mates $s_2,\ldots, s_n$ such that $R[s,s_2,\ldots, s_n]=R^{[n]}$). Note that $\exp(D)(s)=s+1$, making the following definition natural:

\begin{definition} We say that $F\in \GA_n(R)$ has a slice if there exists $s\in  R^{[n]}$ such that $F(s)=s+1$. 
If $s$ is a coordinate, we say that $s$ is a coordinate slice. 
\end{definition}

When conjugating, we encounter the following fenomenon: $(x_{i}+g_{i} )(x_i+f_i, \vec{F}_{i+1})(x_{i}-g_{i})
=(x_i+f_i+g_i(\vec{F}_{i+1})-g_i, \vec{F}_{i+1})$. This gives rise to the following definitions:

\begin{definition}
Given $F\in \BA_n(R)$, define $N:=F-I$, $\vec{F}_i=(F_i,F_{i+1},\ldots, F_n)$, $N_i=\vec{F}_i-(x_i,\ldots, x_n)$ (i.e. $N_1=N$). 
\end{definition}

\begin{lemma} \label{conj}  $(x_i+g_i) F (x_i-g_i) = (F_1(x_i-g_i), \ldots, F_{i-1}(x_i-g_i), F_i+N(g_i), F_{i+1},\ldots, F_n)$. 
\end{lemma}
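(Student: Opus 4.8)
The statement to prove is a direct computation: we apply the composition $(x_i+g_i)\circ F\circ(x_i-g_i)$ to each coordinate $x_1,\dots,x_n$ and read off the result. Let me denote $G_- = (x_1,\dots,x_{i-1},x_i-g_i,x_{i+1},\dots,x_n)$ and $G_+=(x_1,\dots,x_{i-1},x_i+g_i,x_{i+1},\dots,x_n)$, where $g_i\in R[x_{i+1},\dots,x_n]$ so that $G_\pm$ are genuine strictly triangular maps and mutually inverse.

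\textbf{Proof plan.} This is a direct composition computation; the only inputs are the triangular shape of $F$ and the fact that the conjugating factors are strictly triangular. Write $a=(x_1,\dots,x_{i-1},\,x_i-g_i,\,x_{i+1},\dots,x_n)$ and $b=(x_1,\dots,x_{i-1},\,x_i+g_i,\,x_{i+1},\dots,x_n)$, where $g_i\in R[x_{i+1},\dots,x_n]$; these are mutually inverse elements of $\BA_n(R)$, so $b\circ F\circ a$ makes sense. I would first record three elementary consequences of the triangular shape: (i) for $j>i$, $F_j\in R[x_j,\dots,x_n]$ does not involve $x_i$, so $F_j\circ a=F_j$; (ii) $g_i$ does not involve $x_1,\dots,x_i$, so $g_i\circ a=g_i$ and $g_i(F_1\circ a,\dots,F_n\circ a)=g_i(F_{i+1},\dots,F_n)=g_i(\vec F_{i+1})$; (iii) writing $F_i=x_i+f_i$ with $f_i\in R[x_{i+1},\dots,x_n]$ (here we use $F\in\BA_n(R)$), we get $F_i\circ a=(x_i-g_i)+f_i=F_i-g_i$. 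Also recall that for a polynomial $h$, $N(h)$ denotes $h\circ F-h$, so in particular $N(g_i)=g_i(\vec F_{i+1})-g_i$.

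Now set $H:=b\circ F\circ a$ and compute componentwise via $H_j=b_j(F_1\circ a,\dots,F_n\circ a)$. For $j\ne i$ the $j$-th component of $b$ is $x_j$, hence $H_j=F_j\circ a$: by (i) this is $F_j$ when $j>i$, and it is $F_j(x_i-g_i)$ (the substitution $x_i\mapsto x_i-g_i$) when $j<i$. For $j=i$ we have $b_i=x_i+g_i(x_{i+1},\dots,x_n)$, so $H_i=(F_i\circ a)+g_i(F_{i+1}\circ a,\dots,F_n\circ a)$; by (iii) the first term is $F_i-g_i$ and by (ii) the second is $g_i(\vec F_{i+1})$, whence $H_i=F_i-g_i+g_i(\vec F_{i+1})=F_i+\bigl(g_i(\vec F_{i+1})-g_i\bigr)=F_i+N(g_i)$. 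Assembling the components yields exactly $(F_1(x_i-g_i),\dots,F_{i-1}(x_i-g_i),\,F_i+N(g_i),\,F_{i+1},\dots,F_n)$, as claimed.

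\textbf{Main obstacle.} Honestly, there is none of substance: the proof is a one-line substitution once the conventions are pinned down. The only points requiring care are notational — fixing that juxtaposition of maps means composition with the leftmost map applied last, that $F_j(x_i-g_i)$ abbreviates substituting $x_i\mapsto x_i-g_i$ in $F_j$, and that $N(h)=h\circ F-h$ — together with the (routine) observation that the upper-triangular shape prevents $g_i$, which lies in $R[x_{i+1},\dots,x_n]$, from interacting with the components $F_i,F_{i+1},\dots,F_n$ in any way other than through $\vec F_{i+1}$. I would state these conventions at the outset so that the computation above reads off immediately.
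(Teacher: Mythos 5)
Your computation is correct and is exactly the direct substitution the paper has in mind: the paper gives no written proof of Lemma \ref{conj} (it is the "trivial" verification of the displayed identity $(x_i+g_i)(x_i+f_i,\vec F_{i+1})(x_i-g_i)=(x_i+f_i+g_i(\vec F_{i+1})-g_i,\vec F_{i+1})$ stated just before it), and your componentwise check, using the strict triangularity of $F$ and of the conjugating factor $(x_i\pm g_i)$ with $g_i\in R[x_{i+1},\dots,x_n]$, is that same argument written out in full. No gaps; the conventions you fix (composition order, $N(g_i)=g_i(\vec F_{i+1})-g_i$) agree with the paper's.
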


The above (trivial) lemma explains how we can determine equivalence classes of elements in $\BA_n(R)$: we first conjugate by
a map $(x_n+g_n)$ to bring $f_n$ to a standard form, then conjugate by $(x_{n-1}+g_{n-1})$ to bring $f_{n-1}$ to a standard form, etc.
This means that we need to understand $\im(N)$. (Incidentally, $\ker(N)=\inv(F)$, and thus forms a similar role as a locally nilpotent derivation!)

We will prove the following theorem:

\begin{theorem} \label{ThInvIm0} Let $R$ be a ring containing $\Q$. Assume $F=\exp(D)$ is unipotent on $R^{[n]}$. Then\\
(1) $\inv(F)=\ker(D)$ $(=\ker(N))$,\\
(2) $\im(N)=\im(D)$.
\end{theorem}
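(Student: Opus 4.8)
The plan is to exploit Lemma \ref{lnd} and the relation $F = \exp(D)$, writing $N = F - I = \exp(D) - I = D + \tfrac{1}{2}D^2 + \tfrac{1}{3!}D^3 + \cdots$, a finite sum since $D$ is locally nilpotent. The key observation is that $N$ and $D$ commute (both being expressible as polynomials in $D$), and moreover each can be recovered from the other by a formal power series with rational coefficients: $D = \log(I + N) = N - \tfrac{1}{2}N^2 + \tfrac{1}{3}N^3 - \cdots$, again a finite sum applied to any fixed element, since $N$ lowers (a suitable filtration / degree in the sense that makes $F$ locally finite unipotent) just as $D$ does. Thus $D$ and $N$ generate the same operator up to a "unitriangular with respect to the $D$-filtration" change, and it is this mutual polynomial-in-each-other relationship that drives both parts.

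For part (1): $\inv(F) = \ker(N)$ is immediate from the definition $N = F - I$ (indeed $F(a) = a \iff N(a) = 0$). To see $\ker(N) = \ker(D)$, one inclusion is clear: if $D(a) = 0$ then $N(a) = \sum_{j\ge 1}\tfrac{1}{j!}D^j(a) = 0$. Conversely, if $N(a) = 0$, apply the expansion $D(a) = (\log(I+N))(a) = \sum_{j\ge 1}\tfrac{(-1)^{j-1}}{j}N^j(a)$, and every term vanishes, so $D(a) = 0$. (One must check the two formal series $\exp$ and $\log$ are genuine mutual inverses when applied to a fixed element $a$ annihilated by a high enough power of $D$; this is the standard truncated-formal-power-series identity over $\Q$.)

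For part (2): again use $N = D\cdot U$ where $U = I + \tfrac12 D + \tfrac1{3!}D^2 + \cdots$ is an operator that, on the $\Q$-algebra $R^{[n]}$, is invertible with inverse $D^{-1}(\log(I+N))$ read appropriately — more carefully, on any fixed element the operators $U$ and its inverse are polynomials in $D$ with rational coefficients. Since $U$ is "unipotent" relative to the locally nilpotent structure, $\im(N) = \im(D\cdot U) = D(\im U)$. One then argues $\im(U) = R^{[n]}$: given $f$, solve $U(g) = f$ by the usual successive-approximation/Neumann-series argument, valid because $U - I = \tfrac12 D + \cdots$ is locally nilpotent (it is $D$ times something, and $D$ is locally nilpotent, so $(U-I)^m$ kills any fixed element for $m$ large). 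Hence $\im(N) = D(R^{[n]}) = \im(D)$. Symmetrically $\im(D) \subseteq \im(N)$ follows by writing $D = N\cdot V$ with $V = D^{-1}\log(I+N)$ expanded as a polynomial in $N$ with rational coefficients and $V$ again unipotent relative to the filtration.

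The main obstacle is the bookkeeping needed to make "invertible unipotent operator" precise: $R^{[n]}$ is not finitely generated over $R$, so $U$ is not literally an invertible matrix, and one cannot just say $U^{-1}$ exists globally. The clean way around this is to fix an element $a$, let $d$ be such that $D^d(a) = 0$ (and note $D^d$ annihilates the finite-dimensional $R$-submodule generated by $a, D(a), \ldots$), and work inside the finitely generated $R$-module $W = \sum_{j} R\, D^j(a)$, which is $D$-stable; on $W$ the operator $D$ is genuinely nilpotent, $U|_W$ is genuinely invertible with inverse a polynomial in $D|_W$, and all the formal identities become honest finite computations. Threading this localization-to-$W$ argument through both the kernel and image statements, while keeping the coefficients in $R$ (here one re-uses the Vandermonde argument of Lemma \ref{lnd} to guarantee $R$-rationality, not merely rationality over the fraction field), is the only genuinely delicate point; everything else is the standard $\exp/\log$ correspondence.
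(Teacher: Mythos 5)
Your proof is correct, but it is packaged differently from the paper's. For part (2) the paper inducts on the degree function $\deg_D(f)=\max\{d: D^d(f)\neq 0\}$: given $f=D(g)$ it writes $N(g)=D(g)+h$ with $h=\sum_{i\geq 2}\tfrac{1}{i!}D^i(g)\in\im(D)$ of strictly smaller degree, corrects by induction, and handles the reverse inclusion symmetrically. You instead use the operator factorization $N=D\circ U$ with $U=\sum_{i\geq 0}\tfrac{1}{(i+1)!}D^i$, prove $\im(U)=R^{[n]}$ by a terminating Neumann series (legitimate, since $U-I$ is a sum of positive powers of $D$, hence kills any fixed element when iterated), and conclude $\im(N)=D(\im(U))=\im(D)$ in one stroke; for part (1) you use the $\exp/\log$ inverse pair where the paper simply cites the well-known $\ker(\exp(D)-I)=\ker(D)$ for locally nilpotent $D$. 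These are at bottom the same mechanism --- the paper's induction is your Neumann series unrolled one term at a time --- but your version gets both inclusions of (2) simultaneously and makes (1) self-contained, at the cost of having to make ``invertible unipotent operator'' precise on an infinite-rank module, which you do correctly by restricting to the finitely generated $D$-stable submodule $W=\sum_j R\,D^j(a)$. One small remark: the appeal to the Vandermonde argument of Lemma \ref{lnd} for $R$-rationality is not needed here, since the theorem already supplies a locally nilpotent $D$ on $R^{[n]}$ and $\Q\subseteq R$, so every operator you use ($U$, its Neumann inverse, the truncated $\log$ series) is a $\Q$-linear combination of powers of $D$ and automatically preserves $R^{[n]}$; likewise the final ``symmetric'' step writing $D=N\cdot V$ is redundant once $\im(U)=R^{[n]}$ is known.
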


\begin{proof} 
$\inv(F)=\{f~|~F(f)=f\}=\{f ~|~(F-I)(f)=0\}=\ker(N)$.
Now $\inv(F)$ is the invariants of $\exp(D)$ which is well-known to be equal to $\ker(D)$ if $D$ is locally nilpotent, so the first statement holds.

Define a degree function on $R$ by $r\not =0$ then $\deg(r)=\max\{d ~|~D^d(r)\not =0\}$, $\deg(0)=-\infty$. (It is well-known that this yields a degree function if $D$ is locally nilpotent.) Denote $R_d:=\{r\in R~|~\deg(R)\leq d\}$.
Given $f\in R$, we proceed by induction to $d=\deg(f)$ to prove $f\in \im(D)\desda f\in \im(N)$. \\
$d=-\infty:$ then $f=0$ and the statement is true.\\
Assume that  $\im(D)\cap R_{d-1}= \im(N)\cap R_{d-1}$, let $f\in R_d$. 

Assume $f\in \im(D)$, then $f=D(g)$ for some $g\in R$. Now $N(g)=D(g)+h$ where $h=\sum_{i=2}^{\infty} \frac{1}{i!}D^i(g)\in \im(D)$.
Since $\deg(h)=\deg(g)-2=d-1$ we use induction and find $h'$ such that $N(h') =h$, then $N(g-h')=D(g)=f$, and thus $f\in \im(N)$.

Assume $f\in \im(N)$, then $f=N(g)=D(g)+h$ where $h=$ as above. Since $h\in \im(D)$ we find $h'$ such that $D(h')=h$. 
Thus $D(g+h')=D(g)+D(h')=D(g)+h=f$ and thus $f\in \im(D)$.
\end{proof}

In some sense, the above theorem only translates the problem. $\im(D)$ and $\ker(D)$ are not really easy even for triangular derivations. One example:

\begin{example} \label{Roberts} Let $F=(x_1+x_5^3, x_2+x_6^3, x_3+x_7^3, x_4+(x_5x_6x_7)^2, x_5,x_6,x_7)\in \BA_n(k)$, where $\kar(k)=0$.
Then $\inv(F)$ is not finitely generated. 
\end{example}

The above example is nothing other than the exponent of Robert's example \cite{Rob}, a locally nilpotent derivation whose kernel is not finitely generated. Note there exist counterexamples by Freudenburg \& Daigle-Freudenburg in dimensions 5 and 6 too \cite{F, DF}.

\subsection{Conjugacy classes}

We now want to give some answer to how to describe (representants) of conjugacy classes. 
The generic case is rather complicated, and we will not fully answer it (similarly as no one truly can answer exactly what $\im(D)$ and $\ker(D)$ are in general).  We will focus on some special cases, especially as we want to determine 
what happens in low dimensions.
A first case is easy, but it is an important case:

\begin{proposition} \label{CC1} If $F=(x_1+f_1,\ldots, x_n+f_n) \in \BA_n(R)$ where $f_n\in R^*$, then 
$F$ is in the same conjugacy class as $(x_n+f_n)$. 
\end{proposition}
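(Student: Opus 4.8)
The plan is to use Lemma \ref{conj} repeatedly, peeling off the components from the bottom up, and to exploit the hypothesis $f_n \in R^*$ to kill every component except the last. The key observation is that when $f_n$ is a unit, the map $N = F - I$ acts surjectively in a very controlled way: for any $h \in R[x_{i+1},\ldots,x_n]$, the difference $N(h) = h(\vec F_{i+1}) - h$ is something we can prescribe by choosing $h$ cleverly, because $F_n = x_n + f_n$ with $f_n$ invertible behaves like a translation in the last variable after rescaling.

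First I would handle the top component. Conjugating $F$ by $(x_n + g_n)$ replaces $f_n$ by $f_n + N(g_n) = f_n + g_n(\vec F_{n+1}) - g_n = f_n$ (since $g_n$ is a constant here, as $R[x_{n+1},\ldots] = R$), so the last component is stable; this is fine, we leave $f_n$ as is. Next, to remove $f_{n-1}$, I conjugate by $(x_{n-1} + g_{n-1})$ with $g_{n-1} \in R[x_n]$: by Lemma \ref{conj} the $(n-1)$-st component becomes $x_{n-1} + f_{n-1} + N(g_{n-1})$ where $N(g_{n-1}) = g_{n-1}(x_n + f_n) - g_{n-1}(x_n)$. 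Since $f_n \in R^*$, the operator $g(x_n) \mapsto g(x_n + f_n) - g(x_n)$ on $R[x_n]$ is a ``discrete derivative'' that lowers degree by exactly one and whose image is all of $R[x_n]$ (it sends $x_n^{d}$ to $d f_n x_n^{d-1} + \cdots$, and $d f_n$ is... not necessarily a unit, so one must be slightly careful — but one can instead solve $N(g_{n-1}) = -f_{n-1}$ directly by induction on $\deg_{x_n} f_{n-1}$, using that the leading coefficient equation is solvable because it only requires dividing by $f_n$, not by integers, once we write $g_{n-1} = c x_n^{e+1}/\bigl((e+1) f_n\bigr)$ — wait, this needs $\Q \subseteq R$ or at least that the relevant integers are units). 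Let me restate: since $R \supseteq \Q$ is \emph{not} assumed here (only that $R$ is a ring, though in this paper's running setup it may be), the cleanest route is: the map $\tau\colon h(x_n)\mapsto h(x_n+f_n)$ is an $R$-algebra automorphism of $R[x_n]$, and $N = \tau - \mathrm{id}$; one shows $\im(\tau - \mathrm{id}) = R[x_n]$ by checking it contains every $x_n^j$ via a downward induction that only ever divides by $f_n$. Granting this, choose $g_{n-1}$ with $N(g_{n-1}) = -f_{n-1}$; after conjugation the $(n-1)$-st component is $x_{n-1}$.

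Then I iterate downward: having arranged $F$ to have the form $(x_1+f_1,\ldots,x_{i}+f_i, x_{i+1},\ldots,x_{n-1}, x_n+f_n)$, I conjugate by $(x_i + g_i)$ with $g_i \in R[x_{i+1},\ldots,x_n]$. By Lemma \ref{conj} the components below $i$ are unaffected (they are already $x_{i+1},\ldots,x_{n-1}$ and $x_n+f_n$, and conjugation by $(x_i+g_i)$ changes $F_j$ for $j<i$ by substituting $x_i \mapsto x_i - g_i$, but here $F_j = x_j$ for $i < j \le n-1$ contains no $x_i$, while $F_n = x_n+f_n$ contains no $x_i$ either — good), and the $i$-th component becomes $x_i + f_i + N(g_i)$ where now $N(g_i) = g_i(\vec F_{i+1}) - g_i$ and $\vec F_{i+1} = (x_{i+1},\ldots,x_{n-1}, x_n+f_n)$, i.e. $N$ is again just the shift $x_n \mapsto x_n + f_n$ acting on $R[x_{i+1},\ldots,x_n]$. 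The same surjectivity argument (now over the coefficient ring $R[x_{i+1},\ldots,x_{n-1}]$ in place of $R$) produces $g_i$ with $N(g_i) = -f_i$, so the $i$-th component becomes $x_i$. After running $i$ from $n-1$ down to $1$, the composite conjugator brings $F$ to $(x_1,\ldots,x_{n-1}, x_n+f_n) = (x_n + f_n)$, as claimed.

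The main obstacle is the surjectivity claim ``$\im(\tau - \mathrm{id}) = R[x_n]$ where $\tau$ is the shift by the unit $f_n$'': one must verify that the induction on $x_n$-degree genuinely only requires inverting $f_n$ (which is legitimate) and never requires inverting an integer. Concretely, if $\tau(h) - h$ is to equal a polynomial of degree $d$ with leading coefficient $c$, one takes $h$ of degree $d+1$; expanding $(x_n+f_n)^{d+1} - x_n^{d+1}$ gives leading term $(d+1) f_n x_n^{d}$, so matching leading coefficients forces dividing $c$ by $(d+1)f_n$ — hence one does need $(d+1)$ invertible, i.e. one does need $\Q \subseteq R$ (consistent with the chapter's standing hypothesis that $R$ is a domain containing $\Q$; if $R$ has characteristic $p$ this proposition genuinely can fail and the paper treats that separately). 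So the honest statement of the lemma inside this section is under $\Q \subseteq R$, and under that hypothesis the induction goes through cleanly and everything else is bookkeeping with Lemma \ref{conj}.
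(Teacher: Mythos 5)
Your proof is correct (under the standing hypothesis of this section that $R$ is a domain containing $\Q$, which you rightly identify as essential), and it shares the paper's outer skeleton: kill $f_{n-1},f_{n-2},\ldots,f_1$ one at a time by conjugating with $(x_i+g_i)$, using Lemma \ref{conj} and the fact that the already-normalized components with index $>i$ are untouched. Where you differ is in how the key surjectivity is obtained. The paper gets $\im(N_i)=R[x_i,\ldots,x_n]$ in one stroke from machinery it has already built: $F=\exp(D)$ (Lemma \ref{lnd}), $D$ has a slice because $f_n\in R^*$, hence $\im(D)$ is everything (Lemma \ref{lnd2}), and $\im(N)=\im(D)$ (Theorem \ref{ThInvIm0}). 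You instead prove directly that the finite-difference operator $g(x_n)\mapsto g(x_n+f_n)-g(x_n)$ is surjective on $R[x_{i+1},\ldots,x_{n-1}][x_n]$ by induction on $x_n$-degree, dividing the leading coefficient by $(d+1)f_n$ at each step. That is a valid, more elementary and self-contained route; it also makes transparent exactly where $\Q\subseteq R$ enters (the integer $d+1$ must be invertible) and hence why the statement genuinely fails in characteristic $p$, which the paper treats separately in Section \ref{Sec3}. The paper's route is shorter given Theorem \ref{ThInvIm0} and fits its theme of translating questions about $N$ into questions about $D$. One caveat: your intermediate assertions that the induction ``only ever divides by $f_n$'' (and the aside about a leading coefficient $df_n$) are wrong as first written, but you catch and correct this in your final paragraph, so the argument as finally stated stands.
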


\begin{proof} Since $\im(N_i)=k[x_i,\ldots,x_n]$ (as $F=\exp(D)$ where $D$ has a slice, and using theorem \ref{ThInvIm0}) for each integer $2\leq i\leq n$, we can find $g_i\in k[x_i,\ldots,x_n]$ such that 
$N(g_i)=N_i(g_i)=f_i$. This means that we can conjugate any $F=(x_1+f_1,\ldots, x_j+f_j,x_{j+1},\ldots,x_{n-1},x_n+f_n)$ by $(x_j+g_j)$ (see lemma \ref{conj})
and get a map of the form $(x_1+f_1',\ldots, x_{j-1}+f_{j-1}', x_j, \ldots,x_{n-1},x_n+f_n)$. Continuing this process we end up at the map $(x_n+f_n)$. 
\end{proof}

The above proof contains a little bit what one can do in general, and what is explained in lemma \ref{conj}: given $F=\exp(D)=(x_1+f_1,\ldots,x_n+f_1)$
we first conjugate by $(x_{n-1}+g_{n-1})$ for some appropriate $g_{n-1}$ (conjugating by $(x_n+g_n)$ changes nothing). 
This changes the $n-1$-term into $x_n+f_{n-1}+N(g_{n-1})$, and this requires us to understand $N(R[x_n])$, which is equal to 
$D(R[x_n])$. We then can pick a representant of $f_{n-1}$ modulo $\im(D)$ and continue by conjugating by $(x_{n-2}+g_{n-2})$ etc. 
It is therefore important to understand $R^{[n]}/\im(D)$ for a triangular $D$; it enables one to understand the conjugacy classes in $\BA_n(R)$. 

At first it seems like $R^{[n]}/\im(D)$ might be understandable, allow us to elaborate: $\im(D)$ is a free $R$-module generated by $\{D(T)~|~T$ is a monomial in $R^{[n]}\}$. 
So you can reduce each given $g\in k^{[n]}$ modulo the highest degree terms appearing in these $D(T)$ (with respect to a lexicographic grading $\degg$ given by $x_1>>x_2>>\ldots>>x_n$), and get something unique. (Indeed, with respect to this lexicographic ordering, denoting $\lt(g)$ as the leading term of $g$, one can give a nice description of $\lt(D(g))$ related to $\lt(g)$ with respect to this grading.) 

However, this does not necessarily give unique representants of $R^{[n]}/\im(D)$.
The main reason why this fails is that there can exist polynomials $g,h$ with the property that $\degg(g)>\degg(h)$ but $\degg(D(g))<\degg(D(h))$. We give the following example:

\begin{example} Let $D=x_2\partial_1+\partial_2$. Then $\lt(x_1)> \lt(x_2^2)$ but $\lt(D(x_1))<\degg(D(x^2_2)$.
\end{example}

Nevertheless, this is an important idea to keep in mind in the low dimensional cases we shall now consider.

\subsection{Conjugacy classes within $\BA$ in dimensions 2 and 3}

\begin{theorem} \label{ThCC2} The conjugacy classes of $\BA_2(R)$ in $\BA_2(R)$ where $\Q\subseteq R$ is a domain, are parametrized by pairs
$(\bar{f}_1,f_2)$ where
\begin{enumerate}
\item $f_2=0$, $\bar{f_1}=a_nx_2^n+a_{n-1}x_2^{n-1}+\ldots+a_0$ where $a_{n-1}$ is picked as a unique representant in $R/a_nR$,
\item  $f_2\in R\backslash\{0\}$, $ \bar{f_1}=\bar{a}_nx_2^n+\bar{a}_{n-1}x^{n-1} +\ldots+\bar{a}_0 \in R/(f_2) [x_2]$, where $\bar{a}_{n-1}$ is a uniquely picked representant from $(R/(f_2))/(\bar{a}_n)$. 
\end{enumerate}
In particular, if $R=k$ a field, then the classes are
\begin{enumerate}
\item  $(x_1+f(x_2), x_2)$, $f(x_2)=f_nx_2^n+f_{n-2}x_2^{n-1}+\ldots+f_0 \in k[x_2]$ where $f_n\not = 0$ (i.e. the next-to highest term has coefficient zero),
\item $(x_1,x_2+\lambda)$, $\lambda\in k$.
\end{enumerate}
\end{theorem}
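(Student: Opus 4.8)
The plan is to use Lemma \ref{conj} repeatedly, exploiting the description of $\im(N)=\im(D)$ from Theorem \ref{ThInvIm0} in the two-variable case. Write $F=(x_1+f_1,x_2+f_2)\in\BA_2(R)$, so that $f_1,f_2\in R[x_2]$ and, since $F$ is triangular with $F_2=x_2+f_2$ and $f_2\in R[x_2]$, unipotence forces $f_2\in R$ (a constant). The derivation is $D=f_1\partial_1+f_2\partial_2$ with $F=\exp(D)$; note $N(g)=N_1(g)=D(g)+\tfrac12 D^2(g)+\cdots$ for $g\in R[x_2]$. Conjugating $F$ by $(x_1+g_1)$ with $g_1\in R[x_2]$ (conjugating by $(x_2+g_2)$ does nothing by Lemma \ref{conj}, since $N$ vanishes on the last coordinate's ``extra'' slot) replaces $f_1$ by $f_1+N(g_1)$ and leaves $f_2$ fixed. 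So the conjugacy classes within $\BA_2(R)$ are exactly parametrized by $f_2\in R$ together with the class of $f_1$ in $R[x_2]/N(R[x_2])$, and since $\deg_{x_2}$ is bounded appropriately $N(R[x_2])=D(R[x_2])$ as an $R$-submodule.

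First I would treat case $f_2=0$. Then $D=f_1(x_2)\partial_1$, and for $g_1=\sum b_j x_1^{?}\cdots$ — more precisely the relevant $g_1$ live in $R[x_1,x_2]$, but by Lemma \ref{conj} only $g_1\in R[x_1,x_2]$ with image under $N$ landing in $R[x_2]$ matter; computing $D$ on monomials $x_1^a x_2^b$ gives $D(x_1^a x_2^b)=a f_1(x_2)x_1^{a-1}x_2^b$, so $\im(D)\cap R[x_2]$ is the $R$-module $f_1(x_2)R[x_2]$ when $a=1$, but we also get all of $R[x_2]\cdot(\text{stuff})$... Here I must be careful: the point is that conjugation can change $f_1$ only by elements of $N(R[x_1,x_2])$ whose value is forced to stay of the shape ``$x_1$-free'', and a short direct computation shows the achievable changes to $f_1$ form the $R$-module generated by $f_1$ itself times $\tfrac{d}{dx_2}$-primitives — equivalently, writing $f_1=a_nx_2^n+\cdots+a_0$, one can kill every coefficient except $a_n$ and $a_{n-1}$, and $a_{n-1}$ only modulo $a_n R$ (this is exactly the failure of $D$ to be surjective: the leading coefficient $a_n$ is an ``invariant'' obstruction). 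This yields parametrization (1): $\bar f_1=a_nx_2^n+\cdots+a_0$ with $a_{n-1}$ a chosen representative of $R/a_nR$, $f_2=0$.

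Next I would treat $f_2\in R\setminus\{0\}$. Then $D=f_1\partial_1+f_2\partial_2$ has the slice $s=x_2/f_2$ over the localization, and in any case $D$ acts on $R[x_1,x_2]$; I compute $D(R[x_1,x_2])\cap R[x_2]$ and reduce modulo it. The cleanest route: pass to $\bar R=R/(f_2)$; modulo $f_2$, $D$ descends and one checks that $f_1$ can be normalized exactly as in case (1) but with $R$ replaced by $\bar R=R/(f_2)$, giving $\bar f_1=\bar a_n x_2^n+\cdots+\bar a_0\in (R/(f_2))[x_2]$ with $\bar a_{n-1}$ chosen in $(R/(f_2))/(\bar a_n)$. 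One must verify that two such normal forms with the same $f_2$ and the same $\bar f_1$ are genuinely conjugate and that distinct ones are not — this is where the ``unique representative'' bookkeeping has to be pinned down carefully, and it is the main obstacle: showing that no further conjugation collapses the listed data, i.e. that the listed invariants $(f_2;\bar f_1)$ are complete invariants. For the field case $R=k$, in case (1) $a_nR=k$ so $a_{n-1}$ can be taken $0$, and rescaling $x_1\mapsto \lambda x_1$ (an inner conjugation is not available in $\BA_2$, but the normalization of the leading coefficient is not claimed) leaves $f_n\neq0$ arbitrary — actually the statement keeps $f_n$, giving $(x_1+f(x_2),x_2)$ with vanishing next-to-top coefficient; in case (2), $(R/(f_2))=0$ when $f_2=\lambda\in k^*$, so $\bar f_1=0$ and we get $(x_1,x_2+\lambda)$. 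Finally I would double-check the degenerate overlaps (e.g. $f_1$ of degree $\le 1$, or $f_1=0$) fit the stated lists.

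Throughout, the only substantive computation is the determination of $D(R[x_1,x_2])\cap R[x_2]$ and the resulting normal form for a single polynomial in one variable under addition of that $R$-module; everything else is Lemma \ref{conj} together with Theorem \ref{ThInvIm0}. I expect the write-up to spend most of its effort on the claim that the leading coefficient (and its effect on the next coefficient) is the \emph{only} surviving obstruction, and on checking that the representatives chosen are forced to be unique.
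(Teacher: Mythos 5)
Your overall skeleton (use Lemma \ref{conj} plus Theorem \ref{ThInvIm0} to see that conjugation by $(x_1+g_1)$, $g_1\in R[x_2]$, changes $f_1$ by $N(g_1)$, and that $N(R[x_2])=D(R[x_2])=f_2R[x_2]$) is the same as the paper's, but there is a genuine error at the start: the parenthetical claim that conjugation by $(x_2+g_2)$ ``does nothing.'' By Lemma \ref{conj} with $i=2$, conjugating by $(x_2+\tilde h_2)$, $\tilde h_2\in R$, leaves the second component unchanged (since $N(\tilde h_2)=0$) but replaces the first component by $F_1(x_2-\tilde h_2)$, i.e.\ it substitutes a translate of $x_2$ into $f_1$. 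Since $\Q\subseteq R$, this moves the coefficient of $x_2^{n-1}$ by $n\tilde h_2a_n$, which sweeps out exactly $a_nR$ (resp.\ $\bar a_n(R/(f_2))$ after reducing mod $f_2$). This translation step is the \emph{only} source of the quotient by $a_nR$ in the statement; having discarded it, your intermediate assertion that the classes are ``exactly parametrized by $f_2$ together with the class of $f_1$ in $R[x_2]/N(R[x_2])$'' is not the theorem (it overcounts classes), and nothing in your argument produces the reduction of $a_{n-1}$.

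The case $f_2=0$ is also handled incorrectly. There $N$ is identically zero on $R[x_2]$, so conjugation by $(x_1+g_1)$ changes nothing at all; the only available move is the translation of $x_2$. Your computation with monomials $x_1^ax_2^b$ and the conclusion that ``one can kill every coefficient except $a_n$ and $a_{n-1}$'' implicitly allows conjugators whose first component depends on $x_1$; these are not elements of $\BA_2(R)$, so Lemma \ref{conj} does not apply to them, and the conclusion contradicts the very normal form you are proving, which retains all of $a_0,\ldots,a_n$ and only normalizes $a_{n-1}$ modulo $a_nR$ (for instance $(x_1+x_2^2+5,\,x_2)$ is not conjugate to $(x_1+x_2^2,\,x_2)$ in $\BA_2(\Q)$). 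Finally, the uniqueness you flag as ``the main obstacle'' is exactly what remains to be argued; the paper disposes of it by observing that, once $f_1$ is reduced modulo $f_2R[x_2]$ and $a_{n-1}$ is normalized by a translation, any further conjugation by $(x_1+\tilde h_1)$ or $(x_2+\tilde h_2)$ disturbs the standard form, using that $R$ is a domain (and that $n\tilde h_2a_n$ determines $\tilde h_2$). So the plan as written would not go through without restoring the $(x_2+\tilde h_2)$-conjugation and redoing case (1) accordingly.
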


\begin{proof} 
Let $F=(x_1+f_1,x_2+f_2)$ be the triangular map. The $x_2+f_2$ part cannot be changed. We will first conjugate by something of the form $(x_1+g_1)$ and then of the form $(x_2+g_2)$. $F$ equals $\exp(D)$ where $D=f_1'\partial_1+f_2\partial_2$ and some $f_1'\in R[x_2]$.
Since conjugation by $(x_1+g_1)$ means modifying $f_2$ by $N(R[x_2])$, we need to understand $R[x_2]/N(R[x_2])$, and using theorem \ref{ThInvIm0} part (2) we see $N(R[x_2])=D(R[x_2])=\im(f_2\partial_2)=f_2R[x_2]$. We can thus conclude that $(x_1+g_1,x_2+g_2)$ and $F$
are equivalent under conjugation by some $(x_2+h_2)$ 
if and only if $f_2=g_2$ and $g_1\in f_1+f_2R[x_2]$.
Let us assume $f_1=a_nx^n+a_{n-1}x^{n-1}+\ldots+a_0$. We can still conjugate by $(x_2+\tilde{h}_2)$, which means that we can change the term of degree $n-1$: $f_1(x_2+\tilde{h}_2)$ has the top part $a_nx^n+(n\tilde{h}_2a_n+a_{n-1})x^{n-1}+\ldots$. Thus we can change the $n-1$ term
by any element in $a_nR$. 
Note that conjugating by any $(x_1+\tilde{h}_1)$ or $(x_2+\tilde{h}_2)$ disturbes the standard form (it is important that $R$ is a domain here!). 
This finally proves the theorem for rings $R$.

The second case $R=k$ follows directly from the previous, but can also be partially proven using lemma \ref{CC1}.
\end{proof}

\begin{theorem} \label{ThCC3} The conjugacy classes of $\BA_3(k)$ in $\BA_3(k)$ are
\begin{enumerate}
\item $(x_1,x_2,x_3+\lambda)$, $\lambda\in k$,
\item  $(x_1+f_1(x_2,x_3),x_2+f_2(x_3), x_3)$ where 
$f(x_2)=f_nx_2^n+f_{n-2}x_2^{n-1}+\ldots+f_0 \in k[x_2]$ where $f_n\not = 0$,
\[ f_1(x_2,x_3)=a_n(x_3)x_2^n+a_{n-1}(x_3)x_2^{n-1}+\ldots+a_0(x_3), \]
and for $i=0,1,\ldots, n-2, n$, $a_i(x_3)$ is the lowest degree element in $a_i(x_3)+f_2(x_3)k[x_3]$, 
while $a_{n-1}(x_3)$ is the lowest degree element in $a_{n-1}(x_3)+(f_2(x_3), a_n(x_3))k[x_3]$.
\end{enumerate}
\end{theorem}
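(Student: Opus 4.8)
The plan is to peel off the coordinates from the bottom, in the spirit of Lemma \ref{conj} and Proposition \ref{CC1}, reducing everything to the two--dimensional classification of Theorem \ref{ThCC2} over the base ring $R=k[x_3]$, plus one residual translation. First I would record that the bottom coordinate is a conjugacy invariant: every $G\in\BA_3(k)$ factors as $(x_1+g_1)(x_2+g_2)(x_3+g_3)$ with $g_1\in k[x_2,x_3]$, $g_2\in k[x_3]$, $g_3\in k$, so it suffices to see what conjugation by each generator does, and by Lemma \ref{conj} conjugating $F$ by $(x_i+g_i)$ leaves the components $F_j$ with $j>i$ untouched; for $i=3$ the third component becomes $F_3+N(g_3)$ with $N(g_3)=g_3\circ F-g_3=0$ since $g_3$ is a constant. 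Hence $f_3$ is fixed by all of $\BA_3(k)$. This separates the two families and shows the maps in family (1) are pairwise non-conjugate. If $f_3\neq 0$, then $f_3\in k^{*}$, and Proposition \ref{CC1} gives $F\sim(x_3+f_3)=(x_1,x_2,x_3+f_3)$, i.e. family (1) with $\lambda=f_3$ (the value $\lambda=0$ being the identity, a degenerate instance of (2)).

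Now assume $f_3=0$, so $F=(x_1+f_1(x_2,x_3),\,x_2+f_2(x_3),\,x_3)$. Conjugating by $(x_1+g_1)$ with $g_1\in k[x_2,x_3]$ and by $(x_2+g_2)$ with $g_2\in k[x_3]$ is precisely conjugation inside $\BA_2(R)$, $R=k[x_3]$, acting on the truncated map $(x_1+f_1,x_2+f_2)$, while conjugating by $(x_3+c)$, $c\in k$, fixes the third coordinate and replaces $(f_1,f_2)$ by $(f_1(x_2,x_3-c),\,f_2(x_3-c))$. So the conjugation action on $(f_1,f_2)$ is generated by the $\BA_2(k[x_3])$-action treated in Theorem \ref{ThCC2} and the translations $x_3\mapsto x_3+c$. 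I would first apply Theorem \ref{ThCC2} over the domain $R=k[x_3]$ to put $(f_1,f_2)$ into the normal form there, taking in each coset of the principal ideals $(f_2)$ and $(f_2,a_n)=(\gcd(f_2,a_n))$ the unique representative of least degree in $x_3$; this yields the stated conditions on $a_0,\dots,a_{n-2},a_n$ (least-degree representative modulo $(f_2)$) and on $a_{n-1}$ (least-degree representative modulo $(f_2,a_n)$). It then remains to use up the residual translations. A translation $x_3\mapsto x_3+c$ preserves degrees and ideals, hence permutes the Theorem \ref{ThCC2}-normal forms, and one pins $c$ down by normalising the leading data: in the generic case $\deg_{x_3}f_2\geq 1$ there is a unique $c$ killing the next-to-leading coefficient of $f_2$ (this is the displayed clause on the leading polynomial, with $f_n\neq 0$ and the next coefficient zero), and the degenerate possibilities --- $f_2\in k^{*}$, where one further reduces $f_1$ modulo $(f_2)=k[x_2,x_3]$ down to $(x_1,x_2+f_2,x_3)$, and $f_2=0$, where one instead applies a Tschirnhaus substitution to normalise the leading coefficient(s) of $f_1$ in $x_2$ --- are handled one at a time and fall into the listed forms.

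The hard part is the uniqueness half of the last case: that two maps already in the displayed normal form which are conjugate in $\BA_3(k)$ must be equal. One combines the uniqueness in Theorem \ref{ThCC2} (a $\BA_2(k[x_3])$-conjugation changes neither the relevant cosets nor a least-degree coset representative) with the fact that the only translation preserving the leading normalisation is the identity; here care is needed because the $\BA_2(k[x_3])$-moves and the translations do not commute, and because in the low-degree and degenerate cases the translation is not pinned down by $f_2$ alone, so one must trace it through the leading coefficients of $f_1$. Concretely I expect to fix an order of operations --- translate first, then use $(x_2+g_2)$ to normalise the $x_2^{n-1}$-coefficient modulo $(a_n)$, then use $(x_1+g_1)$ to reduce the remaining $a_i$ modulo $(f_2)$ while leaving the already-fixed data alone --- and check that each step does not disturb the previous ones. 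That bookkeeping, rather than any single hard idea, is the real content of the proof; the rest is Lemma \ref{conj} together with Theorems \ref{ThInvIm0} and \ref{ThCC2}.
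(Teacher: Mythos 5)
Your skeleton is the same as the paper's: $f_3$ is a conjugation invariant, Proposition \ref{CC1} disposes of $f_3\in k^*$, and the case $f_3=0$ is reduced to Theorem \ref{ThCC2} with $R=k[x_3]$. The difference is that the paper's proof stops there --- it cites \ref{CC1}, cites \ref{ThCC2} over $k[x_3]$, and remarks that lowest-degree coset representatives are unique --- whereas you also track the residual conjugation by $(x_3+c)$, $c\in k$, which by Lemma \ref{conj} acts on $(f_1,f_2)$ by the substitution $x_3\mapsto x_3-c$ and is not part of the $\BA_2(k[x_3])$-action. This extra step is not optional: conjugation is by all of $\BA_3(k)$, not just by the subgroup fixing $x_3$, and read literally the printed normal form does not account for it (as printed, the clause ``$f(x_2)=f_nx_2^n+f_{n-2}x_2^{n-1}+\ldots$, $f_n\neq0$'' concerns an undefined polynomial $f$; ignoring it, the maps $(x_1+x_2x_3,x_2,x_3)$ and $(x_1+x_2(x_3-1),x_2,x_3)$ both satisfy the listed conditions of case (2) yet are conjugate via $(x_3+1)$). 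That clause is evidently the vestige of exactly the normalization you make explicit: when $\deg_{x_3}f_2\geq 1$, kill the next-to-leading coefficient of $f_2$ by the unique admissible translation (characteristic zero). Your uniqueness argument there closes cleanly: any conjugator factors as an $x_3$-translation composed with an element of $\BA_2(k[x_3])$, the latter never changes $f_2$, and a nontrivial translation reintroduces a nonzero subleading coefficient, so the translation part is trivial and \ref{ThCC2} finishes the job. The only genuinely open bookkeeping is the subcase $f_2=0$ with $f_1$ actually involving $x_3$ (the subcase $f_2\in k^*$ collapses to $(x_1,x_2+f_2,x_3)$ and is harmless): there the translation must be pinned down through the coefficients $a_i(x_3)$ of $f_1$, it does not commute with the substitutions $x_2\mapsto x_2+g(x_3)$, and the paper's statement prescribes no normalization at all, so your final representative list will necessarily differ from the printed one in that case. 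That is a defect of the theorem as stated and of its one-line proof rather than of your plan, but you should carry that subcase out explicitly instead of leaving it at ``handled one at a time''.
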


\begin{proof} Using proposition \ref{CC1} we see that if $f_3\in k^*$, then the map is equivalent to 
$(x_1,x_2,x_3+f_3)$, yielding the first case.
Left is $f_3=0$, which comes down to the general case of theorem \ref{ThCC2}, picking $R=k[x_3]$. 
The result now immediately  follows, keeping in mind that  since lowest degree elements in sets like $a_i(x_3)+f_2(x_3)k[x_3]$ are unique: if $a_i\in f_2k[x_3]$, then 
the lowest element is zero.
\end{proof}

\subsection{Conjugacy classes of $\BA$ in $\BAA$ in dimensions 2 and 3}

Any element in $\BAA_n$ can be written as $DG$ where $D$ is diagonal linear, and $G\in \BA_n$. 
This means that if we try to determine a representant of a conjugacy class of $F\in \BA_n$, we conjugate by $
DG$, i.e. consider $DGFG^{-1}D^{-1}$. We thus first pick a representant in $GFG^{-1}\in \BA_n$, and on top of that conjugate by a diagonal linear map.

Conjugation by $(\lambda x_3)$ where $\lambda\in R^*$ on $R[x_3]$ gives a group action
$R^*\times R[x_3]\lp R[x_3]$, given by $\lambda \cdot f(x_3)= \lambda^{-1}f(\lambda x_3)$. 
This case thus gives an additional gathering of conjugacy classes under these kind of orbits. There's not really a simplification of this possible, unless $R$ is an algebraically closed field, or the reals or something specific. And even then it is limited: 
the polynomials of the form $ax^m+bx^{l}$ can be conjgated to the form $x^m+\tilde{b}x^l$, and then the coefficient $\tilde{b}$ can 
be changed if $l$ does not divide $m$ by some conjugation by $\lambda x$ where $\lambda$ is an $m$-th root of unity, etc\ldots

\begin{theorem} \label{ThCC2a} The conjugacy classes of $\BA_2(R)$ in $\BAA_2(R)$ where $\Q\subseteq R$ is a domain, are parametrized by pairs
$(\bar{f}_1,f_2)$ where
\begin{enumerate}
\item $f_2=0$, $f_1=a_nx_2^n+a_{n-1}x_2^{n-1}+\ldots+a_0$ where $a_{n-1}$ is picked as a unique representant in $R/a_nR$,
and then additionally $f_1$ is a unique element in the orbit of the action $\lambda \lp \lambda^{-1}f_1(\lambda x_2)$.
\item  $f_2\in R\backslash\{0\}$, $ \bar{f_1}=\bar{a}_nx_2^n+\bar{a}_{n-1}x^{n-1} +\ldots+\bar{a}_0 \in R/(f_2) [x_2]$, where $\bar{a}_{n-1}$ is a uniquely picked representant from $(R/(f_2))/(\bar{a}_n)$. 
Then, additionally $f_1$ is a unique element in the orbit of the action $\lambda \lp \lambda^{-1}f_1(\lambda x_2)$.
\end{enumerate}
In particular, if $R=k$ a field, then the classes are
\begin{enumerate}
\item  $(x_1+f(x_2), x_2)$, $f(x_2)=f_nx_2^n+f_{n-2}x_2^{n-1}+\ldots+f_0 \in k[x_2]$ where $f_n\not = 0$ (i.e. the next-to highest term has coefficient zero). Additionally $f_1$ is a unique element in the orbit of the action $\lambda \lp \lambda^{-1}f_1(\lambda x_2)$.
\item $(x_1,x_2+1)$, 
\item $(x_1,x_2)$.
\end{enumerate}
\end{theorem}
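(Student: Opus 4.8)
The plan is to bootstrap from Theorem \ref{ThCC2}, which already classifies conjugacy classes of $\BA_2(R)$ inside $\BA_2(R)$, and then add the extra freedom coming from conjugating by affine (diagonal linear) maps. Recall from the discussion preceding the statement that any element of $\BAA_2(R)$ factors as $DG$ with $D=\operatorname{diag}(\mu,\lambda)$ linear and $G\in\BA_2(R)$; since $F\in\BA_2(R)$, conjugation $DGF G^{-1}D^{-1}$ amounts to first replacing $F$ by $GFG^{-1}\in\BA_2(R)$ and then conjugating the result by the linear map $D$. So the first step is: apply Theorem \ref{ThCC2} to bring $GFG^{-1}$ into one of the two normal forms listed there, namely $(x_1+\bar f_1, x_2)$ with $\bar f_1 = a_nx_2^n + a_{n-1}x_2^{n-1}+\cdots+a_0$ and $a_{n-1}$ reduced mod $a_nR$ (case $f_2=0$), or $(x_1+\bar f_1, x_2+f_2)$ with $f_2\neq 0$ and $\bar f_1\in (R/(f_2))[x_2]$ normalized as in Theorem \ref{ThCC2} (case $f_2\neq 0$).

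The second step is to compute the effect of conjugating each normal form by $D=\operatorname{diag}(\mu,\lambda)$, $\mu,\lambda\in R^*$. A direct calculation gives $D\,(x_1+g_1(x_2), x_2+c)\,D^{-1} = (x_1 + \mu^{-1} g_1(\lambda x_2),\, x_2 + \lambda^{-1} c)$. In the case $f_2=0$ (so $c=0$) the $\mu$ can be used freely to rescale $f_1$ by an overall unit, but this does not move us outside the $\BA_2$-normal form, so the only genuinely new identification is $f_1(x_2)\mapsto \mu^{-1}f_1(\lambda x_2)$; absorbing $\mu$ is harmless and one records the remaining orbit under the action $\lambda\mapsto \lambda^{-1}f_1(\lambda x_2)$ exactly as in the statement (after noting that the overall-scaling-by-$\mu$ and the $\lambda$-scaling together generate the action written there, up to the already-imposed normalization of $a_{n-1}$). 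In the case $f_2\in R\setminus\{0\}$ one observes that $f_2\mapsto \lambda^{-1}f_2$; since $f_2$ is only defined up to the conjugation already performed, over a general domain $R$ this contributes the analogous orbit statement on $f_1$, and over a field $k$ one can further normalize $f_2$ itself to $1$, which is what produces the collapse of the $f_2\neq 0$ family to the single class $(x_1, x_2+1)$ (with the degenerate subcase $f_1=0$, $f_2=0$ giving $(x_1,x_2)$).

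The third step is to check that no further collapsing occurs: one must verify that two normal forms related by the listed orbit action and by nothing else are genuinely non-conjugate in $\BAA_2(R)$. This is where the argument needs care. The key point, already flagged in the proof of Theorem \ref{ThCC2}, is that conjugating a map in standard form by any \emph{nontrivial} $(x_1+\tilde h_1)$ or $(x_2+\tilde h_2)$ destroys the standard form — this uses that $R$ is a domain — so the only conjugations preserving the normal form are the diagonal linear ones, whose action we have just computed. Combining this rigidity with the uniqueness already established in Theorem \ref{ThCC2} gives that the residual classification is exactly by orbits of $\lambda\mapsto \lambda^{-1}f_1(\lambda x_2)$ on the $\BA_2$-normalized representatives, which is the assertion. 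The main obstacle I anticipate is bookkeeping in this rigidity step: one has to be careful that after the affine conjugation the degree-$(n-1)$ normalization of $a_{n-1}$ (mod $a_nR$, resp. mod $(f_2,a_n)$) is still well-posed and compatible with the rescaling, i.e. that the two normalizations — "reduce the subleading coefficient" and "pick an orbit representative under rescaling" — can be imposed simultaneously and independently; this requires checking that rescaling by $\lambda$ multiplies $a_n$ and $a_{n-1}$ by $\lambda^{n}$ and $\lambda^{n-1}$ respectively (times the overall $\mu^{-1}$), so it acts compatibly on the quotient $R/a_nR$, and hence the order in which one applies the two normalizations does not matter.
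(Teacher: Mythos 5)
Your proposal is correct and follows essentially the same route as the paper: decompose $\BAA_2(R)$ as diagonal linear maps times $\BA_2(R)$, first normalize within $\BA_2(R)$ via Theorem \ref{ThCC2}, and then quotient by the residual diagonal conjugation $f_1\mapsto\mu^{-1}f_1(\lambda x_2)$, $f_2\mapsto\lambda^{-1}f_2$, with the field case collapsing the $f_2\neq 0$ family to $(x_1,x_2+1)$. You are in fact somewhat more careful than the paper's very terse proof, which omits the rigidity/compatibility check you spell out in your third step.
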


\begin{proof} 
The form $(x_2+f_2)$ can be conjugated by $(x_1,\lambda x_2)$ where $\lambda \in R^*$.
This explains why we pick  $\bar{f}_2\in R/R^*$. Let $f_2$ be any representant of $\bar{f}_2$. 
Then the ideal $Rf_2$ is always the same ideal, regardless of representant. 
Now $(x_1+f_1,x_2+f_2)$ can be conjugated by $(\lambda x_1,x_2)$ to get any element in $R^*f_1$ in stead of $f_1$. 
This proves the first statement.

The case $R=k$ is now trivial.
\end{proof}

\begin{theorem} \label{ThCC3a} The conjugacy classes of $\BA_3(k)$ in $\BAA_3(k)$ are
\begin{enumerate}
\item $(x_1,x_2,x_3)$,
\item $(x_1,x_2,x_3+1)$, 
\item  $(x_1+f_1(x_2,x_3),x_2+f_2(x_3), x_3)$ where 
$f(x_2)=f_nx_2^n+f_{n-2}x_2^{n-1}+\ldots+f_0 \in k[x_2]$ where $f_n\not = 0$.
 Additionally, $f_2$ is a unique element in the orbit of the action $\lambda \lp \lambda^{-1}f_2(\lambda x_2)$. Now
\[ f_1(x_2,x_3)=a_n(x_3)x_2^n+a_{n-1}(x_3)x_2^{n-1}+\ldots+a_0(x_3), \]
and for $i=0,1,\ldots, n-2, n$, $a_i(x_3)$ is the lowest degree element in $a_i(x_3)+f_2(x_3)k[x_3]$, 
while $a_{n-1}(x_3)$ is the lowest degree element in $a_{n-1}(x_3)+(f_2(x_3), a_n(x_3))k[x_3]$.
Furthermore, the sequence $(a_n, a_{n-1},\ldots,a_0)$ is picked uniquely from the orbit under conjugation by $(\lambda x_3)$.
\end{enumerate}
\end{theorem}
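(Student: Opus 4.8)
The plan is to build on Theorem \ref{ThCC3} exactly as Theorem \ref{ThCC2a} was built on Theorem \ref{ThCC2}: every element of $\BAA_3(k)$ factors as $DG$ with $D$ diagonal linear and $G\in\BA_3(k)$, so conjugating $F\in\BA_3(k)$ by an element of $\BAA_3(k)$ amounts to first conjugating $F$ by $G\in\BA_3(k)$ — producing one of the normal forms of Theorem \ref{ThCC3} — and then conjugating that normal form by a diagonal linear map $D=(\mu x_1,\nu x_2,\lambda x_3)$. So the only new work is to track the residual action of the diagonal torus on the list from Theorem \ref{ThCC3}, and then re-normalize.

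First I would dispose of the easy cases. If $f_3\in k^*$, Proposition \ref{CC1} gives the representative $(x_1,x_2,x_3+f_3)$; conjugating this by $(x_1,x_2,\lambda x_3)$ sends $f_3$ to $\lambda^{-1}f_3$, and since $f_3\neq 0$ we may scale it to $1$, giving case (2). The identity is its own class, case (1). For the main case $f_3=0$ we are, as in the proof of Theorem \ref{ThCC3}, in the situation of Theorem \ref{ThCC2} over the domain $R=k[x_3]$, so after conjugation inside $\BA_3(k)$ we have the normal form of Theorem \ref{ThCC3}(2): $F=(x_1+f_1(x_2,x_3),\,x_2+f_2(x_3),\,x_3)$ with $f_2$ monic-leading-coefficient-free... more precisely with the degree-$(n-1)$ coefficient conditions already imposed, and $f_1=\sum a_i(x_3)x_2^i$ satisfying the stated minimality conditions. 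Now I compute the effect of conjugating by $D=(\mu x_1,\nu x_2,\lambda x_3)$. One gets $DFD^{-1}=(x_1+\mu^{-1}f_1(\nu x_2,\lambda x_3),\,x_2+\nu^{-1}f_2(\lambda x_3),\,x_3)$. The factor $\mu^{-1}$ scaling $f_1$ is harmless for the shape but I must check it does not interfere with the $k[x_3]$-coefficient normalizations; it does not, because those were stated as ``lowest degree element in a coset'', a condition preserved under multiplication by a unit of $k$, and the ideals $(f_2)$ and $(f_2,a_n)$ of $k[x_3]$ are unchanged when $f_2,a_n$ are scaled. The substitution $x_2\mapsto\nu x_2$ acts on $f_2(x_3)$ trivially and rescales the $x_2$-degree of $f_1$; since the degree-$(n-1)$ coefficient of (the relevant piece of) the normal form was already pinned, the only freedom left from $\nu$ is absorbed, as in Theorem \ref{ThCC2a}, into choosing $f_1$'s leading coefficient up to the orbit $\lambda\mapsto\lambda^{-1}f(\lambda x_2)$ — but here I fold that into the remaining substitution.

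The substantive point, and the main obstacle, is the substitution $x_3\mapsto\lambda x_3$, which acts simultaneously on $f_2(x_3)$ and on every coefficient $a_i(x_3)$, i.e. on the whole sequence $(a_n,a_{n-1},\dots,a_0)$, via $a_i(x_3)\mapsto\mu^{-1}a_i(\lambda x_3)$ together with $f_2(x_3)\mapsto\nu^{-1}f_2(\lambda x_3)$. I must verify that this torus action is compatible with the coset-minimality normalizations — that is, that if $(a_i)$ are the lowest-degree coset representatives for $F$, then after the substitution the transformed $a_i$ are again the lowest-degree representatives for the transformed cosets (true, since $x_3\mapsto\lambda x_3$ is a graded automorphism of $k[x_3]$ preserving degree and sending the ideals $(f_2)$, $(f_2,a_n)$ to the corresponding transformed ideals) and that scaling $f_2$ by $\nu^{-1}$ commutes with all of this. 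Once that compatibility is established, the set of normal forms of Theorem \ref{ThCC3}(2) is stable under the residual $k^*\times k^*\times k^*$-action, and the classes of $\BA_3(k)$ in $\BAA_3(k)$ are the orbits; choosing one representative per orbit — first normalizing $f_2$ up to $\lambda\mapsto\lambda^{-1}f_2(\lambda x_2)$ using $\nu$ and (the image of) $\lambda$, then using the leftover conjugation by $(\lambda x_3)$ to pick the sequence $(a_n,\dots,a_0)$ uniquely from its orbit — yields precisely the statement in case (3). I would remark, exactly as the paragraph preceding Theorem \ref{ThCC2a} does, that this last orbit selection cannot be made fully explicit over a general $k$ and is only pinned down ``uniquely'' in the sense of choosing a section of the orbit map.
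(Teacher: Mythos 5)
Your proposal is correct and follows essentially the same route as the paper: dispose of $f_3\in k^*$ via Proposition \ref{CC1} (plus a diagonal rescaling to make $f_3=1$), reduce the case $f_3=0$ to Theorem \ref{ThCC2} over $R=k[x_3]$ as in Theorem \ref{ThCC3}, and then superimpose the residual diagonal-torus conjugation exactly as set up before Theorem \ref{ThCC2a}. Your explicit check that the torus action preserves the lowest-degree coset normalizations is a detail the paper leaves implicit, but it is the same argument in substance.
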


\begin{proof} Using proposition \ref{CC1} we see that if $f_3\in k^*$, then the map is equivalent to 
$(x_1,x_2,x_3+f_3)$, yielding the first case.
Left is $f_3=0$, which comes down to the general case of theorem \ref{ThCC2}, picking $R=k[x_3]$. 
Picking a representant in $f_1+(k[x_3]/f_3)[x_2]$ of lowest degree is unique, and the theorem is proven.
\end{proof}

\subsection{Conjugacy classes of $\BAA$ in itself}

The 2-variable case over a domain (and with that, the 3-variable case over a field) are much more involved (but doable in future research). 
Here we aim at the 2-variable case over a field $k$ of characteristic zero. We start with the one-variable case over a ring:

\begin{lemma} \label{G1} Let $R$ be a reduced ring (not necessarily containing $\Q$). Then the conjugacy classes of $\BAA_1(R)=\GA_1(R)$  are
\begin{enumerate}
\item $x+b$ where $b$ is a (unique) representant in $R$ of  $R/R^*$ (the orbit space of the action $R^*\times R\lp R$),
\item $ax+b$ where $a\not =1$, and $b$ is a unique representant in $R$ of $(R/(a-1)R)/R^*$  (the orbit space of the action $R^*\times R/(a-1)R\lp R/(a-1)R$).
\end{enumerate}
In particular, if $R=k$ is a field, then the conjugacy classes are $ax$, $a\in k*$ and $x+1$. 
\end{lemma}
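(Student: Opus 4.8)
The plan is to work out the conjugacy action of $\GA_1(R)$ on itself explicitly and read off canonical forms. An element of $\BAA_1(R)=\GA_1(R)$ is $F=ax+b$ with $a\in R^*$, $b\in R$ (invertibility of $F$ forces $a$ to be a unit since $R$ is a domain — or, more carefully here, $R$ reduced means $F$ invertible still forces $a\in R^*$, as the linear coefficient of $F\circ F^{-1}=x$ is a product $aa'$). Conjugating $F$ by a general $G=cx+d\in\GA_1(R)$, so $G^{-1}=c^{-1}x-c^{-1}d$, I would compute $GFG^{-1}$ directly: it equals $ax + (c(1-a)d\,c^{-1}\text{-type term})$, concretely $GFG^{-1}=ax+\big(cb+(1-a)d\big)$ after simplification (the linear coefficient is conjugation-invariant, as it must be). So the conjugacy class of $ax+b$ is determined by $a$ together with the orbit of $b$ under the maps $b\mapsto cb+(1-a)d$ for $c\in R^*$, $d\in R$.

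Next I would split into the two cases. If $a=1$: the action on $b$ is $b\mapsto cb$ with $c\in R^*$ (the $d$-part contributes nothing since $1-a=0$), so the class is exactly the orbit of $b$ under multiplication by $R^*$, giving form (1), $x+b$ with $b$ a representative of $R/R^*$. If $a\neq 1$: first the translations $d\in R$ let us add any element of $(a-1)R$ to $b$, so $b$ is only well-defined in $R/(a-1)R$; then the units $c\in R^*$ act on $R/(a-1)R$ by multiplication (this is well-defined since $c\cdot(a-1)R=(a-1)R$), and $b$ is a representative of $(R/(a-1)R)/R^*$. This yields form (2). One should also check these two families are genuinely disjoint and that within each family distinct representatives give distinct classes — but that is immediate from $a$ being a conjugacy invariant and from the orbit-space description being exhaustive and canonical by construction.

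For the field case $R=k$: then $k^*$ acts transitively on $k\setminus\{0\}$, so form (1) collapses to just $x$ (take $b=0$) and $x+1$ (any nonzero $b$); and for $a\neq 1$ we have $(a-1)k=k$, so $R/(a-1)R=0$ and $b$ must be $0$, giving $ax$ for each $a\in k^*$, $a\neq 1$. Combining, the classes are $ax$ for $a\in k^*$ (the case $a=1$ being $x$ itself) together with $x+1$, matching the stated conclusion.

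The main obstacle — really the only place needing care — is the hypothesis ``$R$ reduced'' rather than ``$R$ a domain.'' I would use it precisely to guarantee that any $F\in\GA_1(R)$ has the form $ax+b$ with $a\in R^*$: writing $F=\sum a_ix^i$ and $F^{-1}=\sum b_jx^j$, the relation $F(F^{-1})=x$ forces the top-degree product to be nilpotent, hence zero, so by induction $F$ is affine with $a_1$ a unit. Everything after that is the routine affine-group computation above; the orbit-space bookkeeping in case (2) (checking the $R^*$-action descends to $R/(a-1)R$) is the one spot to state cleanly.
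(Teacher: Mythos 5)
Your proposal is correct and follows essentially the same route as the paper: write the element as $ax+b$, compute conjugation by a general affine map to get $b\mapsto cb+(1-a)d$ (the paper's $\mu b+\lambda(a-1)$), and read off the orbit spaces in the two cases $a=1$ and $a\neq 1$, specializing to fields at the end. Your extra remark justifying via reducedness that every element of $\GA_1(R)$ is affine with unit linear coefficient is a point the paper simply asserts, and is a reasonable (if only sketched) addition rather than a different approach.
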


\begin{proof}
A generic element looks like $ax+b$ where $a\in R^*, b\in R$. If $a=1$, then conjugating by $\lambda x$ can change $b$ to $\lambda b$,
meaning that we have to pick $b$'s uniquely from each orbit of the natural action $R^*\times R\lp R$.  Conjugation by $x+\lambda$ does nothing, so this gives the first case.

Now if $a\not =1$ then let us conjugate by a generic element $(\mu x-\lambda)$ where $\mu\in R^*, \lambda\in R$.
Then 
$(\mu x-\lambda) (ax+b)( \mu^{-1}x+\mu^{-1}\lambda)=(ax+\mu b+ \lambda (a-1))$. 
 This means that we cannot change $a$, but we can change $b$ to 
any element in $R^* b+ R(a-1)$. 
\end{proof}

(The above can be extended easily to $R$ not a domain, but in case  then one should be careful with the definition of $\BAA$: does one mean all invertible triangular maps, or maps which send each variable $x_i$ to $\lambda_i x_i +f_i(x_{i+1},\ldots,x_n)$. i.e. is 
$\BAA_1(R)$ polynomials of degree 1 or $\BAA_1(R)=\GA_1(R)$?)

\begin{lemma} Let $k$ be a field of characteristic zero. The conjugacy classes of $\BAA_2(k)$ are 
\begin{enumerate}
\item Second component $y$:
\begin{enumerate}
\item $(x,y)$,
\item $(x+f(y),y)$ where $f(y)=y^d+a_{d-2}y^{d-2}+\ldots a_0$ (i.e. monic and second coefficient zero),
and picked uniquely from the set $\{ f(cy)~|~c^{d-1}=1\}$. 
\item $(bx,y)$ where $b\not =0,1$,
\end{enumerate}

\item  Second component $y+1$:
\begin{enumerate}
\item $(x,y+1)$
\item $(ax,y+1)$ where $a\in k^*, a\not =1$
\end{enumerate}

\item Second component $ay$ where $a\not =1,0$:
\begin{enumerate}
\item $(bx,ay)$ if there is no $m\in \N$ such that $b^m=a$, 
\item $(a^mx+y^m,ay)$ if $a$ is no root of unity,
\item $(a^mx+y^mf(y^r),ay)$ if $r=\ord(a)$, and $f$ monic.
Furthermore, $y^mf(y^r)$ is uniquely picked from $\{(\mu y)^mf(\mu^r y^r) ~|~ \mu^{m+rd}=1\}$ where $d=\deg(f)$. 
\end{enumerate}

\end{enumerate}

Another classification is:
\begin{itemize}
\item[\bf A] (affine), and then
\begin{enumerate}
\item $(bx,y+c)$ where $b\in k^*$, $c\in \{0,1\}$,
\item $(bx,ay)$  where $a,b\in k^*$,
\end{enumerate}
\item[\bf S] (sequential)\\
$(a^mx+y^mf(y^r), ay)$ where $m\in \N$,  $r=\ord(a)$ ($r=0$ if $a$ is no root of unity), and $y^mf(y^r)$ is monic.  
Furthermore, $y^mf(y^r)$ is uniquely picked from $\{(\mu y)^mf(\mu^r y^r) ~|~ \mu^{m+rd}=1\}$ where $d=\deg(f)$. 
\end{itemize}
\end{lemma}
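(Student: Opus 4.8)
The plan is to peel $F=(ax+f(y),\,by+c)\in\BAA_2(k)$ apart using the upper‑triangular structure of conjugation, in the spirit of Lemma~\ref{conj}. The key observation is that conjugating $F$ by $G=(\alpha x+g(y),\,\beta y+\gamma)$ changes the second component $y\mapsto by+c$ only through the one‑variable conjugation $y\mapsto\beta y+\gamma$, and never changes the coefficient $a$ of $x$ in the first component; so two conjugacy invariants are available at once, namely the $\GA_1$‑class of the second component and the scalar $a$. \textbf{Step 1:} apply Lemma~\ref{G1} to $by+c$ to bring the second component, after conjugation, to one of $\sigma\in\{\,y,\ y+1,\ by\ (b\neq0,1)\,\}$; these three families are already separated by the invariants above. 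It remains, in each case, to classify the possible first components $ax+f(y)$ up to the action of the stabilizer $H_\sigma\le\BAA_2$ of $\sigma$.

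\textbf{Step 2 (conjugation formula).} A direct calculation — the $\BAA$‑version of Lemma~\ref{conj} — shows that conjugating $(ax+f(y),\,\sigma)$ by $(\alpha x+g(y),\,\tau)\in H_\sigma$ sends
\[
f\ \longmapsto\ \bigl(\alpha f\ +\ L_{\sigma,a}(g)\bigr)\circ\tau^{-1},\qquad L_{\sigma,a}(g):=g\circ\sigma-a\,g,
\]
where $\tau$ ranges over the affine maps in $H_\sigma$: all of $\Aff_1$ when $\sigma=y$, the translations when $\sigma=y+1$, the scalings $y\mapsto\beta y$ when $\sigma=by$. Thus the whole problem reduces to understanding $k[y]$ modulo $\im L_{\sigma,a}$ together with the leftover rescaling/substitution $f\mapsto\alpha f\circ\tau^{-1}$.

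\textbf{Step 3 (compute $\im L_{\sigma,a}$ and read off normal forms).} One computes: $L_{y,a}=(1-a)\,\mathrm{id}$; $L_{y+1,a}$ is a degree‑preserving bijection of $k[y]$ when $a\neq1$ and is the forward difference — hence, in characteristic $0$, surjective onto $k[y]$ — when $a=1$; and $L_{by,a}$ is diagonal with $L_{by,a}(y^m)=(b^m-a)y^m$, so it is onto exactly when $a\notin\{1,b,b^2,\dots\}$ and otherwise has cokernel spanned by the monomials $y^m$ with $b^m=a$ (a single monomial if $b$ is not a root of unity; the progression giving $y^{m}k[y^{r}]$ if $\ord(b)=r$). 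Whenever $L_{\sigma,a}$ is onto we may take $f=0$ and read off a linear/affine normal form; otherwise we reduce $f$ modulo $\im L_{\sigma,a}$ to an explicit small monomial combination, use $\alpha$ (and $\tau$, when present) to make it monic and — when $\sigma=y$ — to kill the subleading coefficient, and then record one representative of the residual scaling orbit. The ``affine versus sequential'' alternative classification is just a repartition of the resulting list: the sequential forms are exactly the cases $\sigma=by$, $a=b^m$, $f\neq0$, which all take the stated shape $(a^{m}x+y^{m}f(y^{r}),\,ay)$.

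\textbf{Expected main obstacle.} Steps 1–3 are short; the real care lies in the final move of Step 3 — determining, in each non‑surjective case, the \emph{full} stabilizer of the normalized shape (not merely a subgroup adequate to reach it), working out how it acts on the surviving coefficients, and extracting a genuinely unique representative, which is precisely where the root‑of‑unity conditions such as ``$\mu^{m+rd}=1$'' have to be pinned down exactly. The non‑conjugacy of distinct normal forms then follows from the invariance of the second‑component class and of $a$ together with this same orbit analysis, so this is the one place where the argument is delicate rather than mechanical.
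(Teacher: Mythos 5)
Your proposal is correct and follows essentially the same route as the paper: normalize the second component via Lemma \ref{G1}, then study the twisted coboundary operator $g\mapsto bg-g\circ\sigma$ on $k[y]$ (your $L_{\sigma,a}$) case by case, and finish by normalizing under the residual stabilizer (scalings of $x$, and the affine substitutions in $y$ fixing $\sigma$), which is exactly where the root-of-unity conditions arise. Your uniform conjugation formula and the explicit remark that the $x$-coefficient is a conjugacy invariant are just a packaged version of the computations the paper carries out inline in its three cases.
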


\begin{proof} 
Write $F=(bx+f(y), ax+\lambda)$. 
Using lemma \ref{G1}, and the fact that $a-1$ is invertible if $a\not =1$ and thus $(a-1)R=(a-1)k=k$,
 the second component is one of three $y, y+1, ay$ where $a\not =1,0$. We will consider these three cases.

\underline{\bf Case $a=1, \lambda=0$ }

Then we have $(bx+f(y), y)$. We can actually apply lemma \ref{G1} to $(bx+f(y))$ on $R=k[y]$. 
If $b\not =1$ then $k[y]=(b-1)k[y]$,  so we can get $f=0$. 
In case $b=1$, $f$ is unique in $k[y]/k^*$ - which means we can pick $f$ monic (or $f=0$).
Now we've only taken into account conjugations by $(cx+g(y))$, however -
we still need to check what happens under conjugation by $(cy+g)$, i.e. conjugations by $(cy)$ and $(y+g)$. 
Write $d=\deg(f)$.
Using conjugation by $(y+g)$ where $g\in k$, we can make sure that the $d-1$ coefficient is zero. 
Conjugation by $cy$ can change $f(y)$ by a nonzero scalar. However, $f(y)$ needs to stay monic, so we can only 
change $f(y)$ into $c^{-1}f(cy)=c^{d-1}y^d+\ldots$ where $c^{d-1}=1$.

\underline{\bf Case $a=1, \lambda=1$ }

Conjugating by $(x+g(y))$ gives $(bx+bg(y)-g(y+1)+f(y),y+1)$ which means we need to consider the map $k[y]\lp k[y]$ given 
by $g(y)\lp bg(y)-g(y+1)$. This map is surjective in all cases:
if $b\not =1$ then  $y^m$ is mapped to a polynomial of degree $m$ and the map is bijective, actually.
If $b=1$ then $y^m$ is mapped to a polynomial of degree $m-1$, and the map is still surjective (though not bijective). 
Thus, we conjugate to $(bx,y+1)$. Any conjugation will disturb this form or leave it unchanged, so this is the final form for this case.

\underline{\bf Case $a\not =1,0, \lambda=0$}

Conjugation by $(x+g(y))$ yields $(bx+bg(y)-g(ay)+f(y), ay)$. This means we need to understand the map $k[y]\lp k[y]$ 
given by $g(y)\lp bg(y)-g(ay)$. This map decomposes into homogeneous parts, so we need to consider
$y^m\lp by^m-a^my^m$. This map is surjective if $b-a^m\not =0$ for all $m$. 
If $b=a^m$, then the $y^m$ part cannot removed. So let $m$ is the lowest integer such that $b=a^m$. If there's another integer $m'$ such that $b=a^{m'}$, then $a^{m'-m}=1$ and thus $a$ has finite order. Concluding, we get 
$(a^mx+cy^m, ay)$ where $c\in k$ if $a$ is no root of unity, and $(a^mx+y^mg(y^r), ay)$ where $r=\ord(a)$, and $m<r$. 
Conjugating by $(\mu x)$ we can make sure that $c=1$ (or $c=0$) and $g(y^r)$ monic. 
Conjugating by $(\mu y)$ we change the monicness of $y^mg(y^r)$ unless $\mu^d=1$ where $d=\deg(y^mg(y^r))$.
\end{proof}

\subsection{Higher dimensions}

There are some higher dimensional cases which we expect that can be aquired by some more effort (but become rather technical): $\BA_3(R)$ in $\BA_3(R)$ and $\BAA_3(R)$ for domains $R\supset \Q$, and with that also $\BA_4(k)$ in $\BA_4(k)$ and $\BAA_4(k)$ for fields $k$ of characteristic zero. Also
$\BAA_2(R)$ in $\BAA_2(R)$ and with that also $\BAA_3(k)$ in $\BAA_3(k)$ should be achievable.
It is a bit of a  challenge to give a good description which doesn't ``explode'', however.

We do expect that (perhaps in dimension 5 or 6) it is very, very hard or impossible to truly classify the conjugacy classes. We expect similar difficulties as with $\ker(D)$, which can be infinitely generated in dimension 5, and where it's unknown if it can be infinitely generated in dimension 4.

\section{Characteristic $p$: strictly triangular maps of maximal order}

\label{drie}
\label{Sec3}

\subsection{Introduction}

In this whole section \ref{drie}, $k$ is a field of characteristic $p$. 
The characteristic $p$ case brings in additional difficulties with respect to the goal of classifying conjugates. 
In characteristic zero we have proposition \ref{CC1}, which essentially states that if $F$ has last component $x_n+f_n$ where $f_n\in k^*$, then your map is very simply up to a conjugation. Another issue is that in characteristic $p$, we have no true equivalent of 
lemma \ref{lnd}, which states that $F=\exp(D)$ for some locally nilpotent derivation $D$. 
The equivalent object in characteristic $p$ to a locally nilpotent derivation is a locally finite higher iterative derivation (see \cite{hd}, we will not give details in this article),
which has the following issue: if $F=\exp(D)$ where $D$ is such a  locally finite higher iterative derivation, then $F^p=I$. 
This means, that it doesn't even include all strictly triangular polynomial maps, let alone all unipotent maps. 
Hence, we need to resort to (slightly) different methods, and will have difficulty going into dimension 3 (and higher) except for special cases. 

\begin{definition} If $F\in \BA_n(R)$, define
$M_i:=\sum_{j=0}^{p^{n+1-i}-1} {F}^j$ for $1\leq i\leq n$. We recall the definition $N=F-I$. We say $M=M_1$. 
\end{definition}

\begin{lemma} Let $R$ be a commutative ring of characteristic $p$. Let $F=(x_1+f_1,\ldots, x_n+f_n)\in \BA_n(R)$. \\
(1) $F^p$ fixes $x_n$, i.e. $F^p\in \BA_{n-1}(R[x_n])$. \\
(2)  $F^{p^n}=I$. \\
(3) 
$F^{p^m}=(x_1+g_1,\ldots, x_{n-m}+g_{n-m}, x_{n-m+1}, \ldots, x_n)$ for some $g_i\in R[x_{i+1},\ldots,x_n]$, 
and $g_{n-m}=M_{n-m}(f_{n-m})\in \im(M_{n-m})$. 
\end{lemma}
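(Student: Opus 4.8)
The plan is to prove all three parts at once by induction on $n$, using the triangular structure to peel off the last variable. The key observation is that if $F=(x_1+f_1,\ldots,x_n+f_n)\in\BA_n(R)$ with $f_i\in R[x_{i+1},\ldots,x_n]$, then $F$ acts on $x_n$ simply by $x_n\mapsto x_n+f_n$ where $f_n\in R$ is a \emph{constant} (no variables left). Hence $F^j(x_n)=x_n+jf_n$, and since $\kar(R)=p$ we get $F^p(x_n)=x_n+pf_n=x_n$. This proves (1): $F^p$ fixes $x_n$, so $F^p$ restricts to a strictly triangular map of $R[x_n]^{[n-1]}=(R[x_n])[x_1,\ldots,x_{n-1}]$, i.e. $F^p\in\BA_{n-1}(R[x_n])$.

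For (2) and (3), I would set up the induction. The base case $n=1$ is immediate: $F=(x_1+f_1)$ with $f_1\in R$, so $F^p=I$, and $F^{p^0}=F=(x_1+M_1(f_1))$ since $M_1=\sum_{j=0}^{p^0-1}F^j=I$ and $M_1(f_1)=f_1$ (here the constancy of $f_1$ makes $F^j(f_1)=f_1$). For the inductive step, assume the statement for dimension $n-1$ over any characteristic-$p$ ring. By (1), $G:=F^p\in\BA_{n-1}(R[x_n])$, so by the induction hypothesis $G^{p^{n-1}}=I$, which gives $F^{p^n}=(F^p)^{p^{n-1}}=G^{p^{n-1}}=I$, proving (2). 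Similarly, for $1\le m\le n$, write $F^{p^m}=G^{p^{m-1}}$ and apply the induction hypothesis to $G$ in dimension $n-1$: it fixes $x_{n-m+1},\ldots,x_{n-1}$ (and $x_n$, the ground-ring variable), and the $(n-m)$-th component is $x_{n-m}+M^G_{n-m}(\text{``}f^G_{n-m}\text{''})$, where $M^G_{n-m}$ is the analogous operator for $G$. The remaining task is a bookkeeping identity: to identify $G$'s data with $F$'s, namely that the appropriate partial sum of powers of $G$ applied to the appropriate component of $G$ equals $M_{n-m}(f_{n-m})=\sum_{j=0}^{p^{n+1-(n-m)}-1}F^j(f_{n-m})=\sum_{j=0}^{p^{m+1}-1}F^j(f_{n-m})$.

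The cleanest way to nail that identity is to compute $F^{p^m}$ directly via a telescoping/geometric-series argument rather than relying on matching up the recursive operators. Observe that for any $F\in\GA_n(R)$ one has the formal identity $F^N - I = \bigl(\sum_{j=0}^{N-1}F^j\bigr)\circ(F-I)$ in the sense of acting on each coordinate function: $(F^N-I)(h)=\sum_{j=0}^{N-1}F^j\bigl((F-I)(h)\bigr)$ is \emph{not} quite right because $F$ is not additive, so instead I would argue componentwise by induction on the coordinate index. Concretely, for the top block of coordinates fixed by $F^{p^m}$ this follows from part (1) applied iteratively (each application of $F^{p^j}$ kills one more variable from the top, by (1) used in successively smaller dimensions). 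For the first surviving coordinate $x_{n-m}$: write $F^{p^m}(x_{n-m}) = x_{n-m} + \sum_{j=0}^{p^m-1} (F^j\text{-contribution})$; more carefully, since $F^{p^m}=G^{p^{m-1}}$ and $G=F^p$ already fixes $x_{n-m+1},\ldots,x_n$, the $(n-m)$-component of $G$ is $x_{n-m}+\big(\sum_{j=0}^{p-1}F^j\big)(f_{n-m})$, and then $G^{p^{m-1}}$ adds $\sum_{i=0}^{p^{m-1}-1}G^i$ applied to that increment; combining $\sum_{i=0}^{p^{m-1}-1}G^i\circ\sum_{j=0}^{p-1}F^j=\sum_{j=0}^{p^m-1}F^j$ (a standard regrouping of a geometric progression, valid because $G^i=F^{pi}$). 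Since for the \emph{first} surviving index the earlier components don't feed in (all $f_\ell$ with $\ell>n-m$ involve only $x_{n-m+1},\ldots,x_n$, which are already fixed, so $F^j(f_{n-m})$ is computed purely from those fixed variables), this sum is exactly $M_{n-m}(f_{n-m})$, and it visibly lies in $\im(M_{n-m})$.

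The main obstacle is precisely the last point: because polynomial maps compose non-additively, one cannot blindly invoke $F^N-I=(\sum F^j)(F-I)$. The argument works for the \emph{first non-fixed coordinate} only because, at that level, the action of $F$ on $f_{n-m}$ depends solely on variables that $F^{p^m}$ has already frozen, which linearizes the recursion; getting this localization right — i.e. carefully tracking which variables are fixed at each stage of the induction and confirming that the increment to $x_{n-m}$ really is $\sum_{j=0}^{p^{m+1}-1}F^j(f_{n-m})$ and not something entangled with the lower components — is where the care is needed. I expect everything else (the induction scaffolding, the geometric-series regrouping $\sum_{i<p^{m-1}}F^{pi}\cdot\sum_{j<p}F^j=\sum_{j<p^m}F^j$, and the range bookkeeping $p^{n+1-i}$ with $i=n-m$) to be routine.
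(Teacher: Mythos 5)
Your parts (1) and (2) coincide with the paper's argument (part (1) is the observation $F^j(x_n)=x_n+jf_n$, and (2) is the induction through $G:=F^p\in\BA_{n-1}(R[x_n])$). For (3), however, your proposal has a gap at exactly the point you flag as ``where the care is needed'': you justify the identity that the $(n-m)$-th component of $G=F^p$ is $x_{n-m}+\bigl(\sum_{j=0}^{p-1}F^j\bigr)(f_{n-m})$ by asserting that ``$G=F^p$ already fixes $x_{n-m+1},\ldots,x_n$'', and later that ``$F^j(f_{n-m})$ is computed purely from those fixed variables''. Neither claim is true: part (1) only gives that $F^p$ fixes $x_n$, and $F$ itself fixes none of $x_{n-m+1},\ldots,x_{n-1}$ in general. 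For instance $F=(x_1+x_2^{p-1},x_2+1)\in\BA_2(k)$ has $F^p=(x_1-1,x_2)$, which does not fix $x_1$. The identity you need is correct, but as written it is asserted on a false ground, so the heart of (3) is not established.

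The repair is elementary and is exactly what the paper does: each $F^i$ acts on $R^{[n]}$ as a ring endomorphism, hence additively, so your worry about non-additivity is unfounded, and a one-line induction on the exponent $d$ (composing with $F$ sends $x_j\mapsto x_j+f_j$ and $F^i(f_j)\mapsto F^{i+1}(f_j)$) gives, for every component $j$ and every $d\geq 1$,
\[ (F^d)_j \;=\; x_j+\sum_{i=0}^{d-1}F^i(f_j), \]
with no restriction to the ``first surviving coordinate'' and no tracking of frozen variables. Taking $d=p^m$, $j=n-m$, and using (2) for the restriction of $F$ to $R[x_{n-m+1},\ldots,x_n]$ (which makes the components above $n-m$ the identity) finishes (3); your two-step version via $G$ and the regrouping $\sum_{i<p^{m-1}}F^{pi}\circ\sum_{j<p}F^j=\sum_{l<p^m}F^l$ is then also valid, again because the $F^{pi}$ are additive. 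One bookkeeping remark: the increment obtained this way is $\sum_{i=0}^{p^m-1}F^i(f_{n-m})$, which under the paper's definition of $M_i$ is $M_{n-m+1}(f_{n-m})$, not $M_{n-m}(f_{n-m})$ (the latter, being a sum over $p^{m+1}$ powers, would vanish in characteristic $p$); this off-by-one is present in the statement itself (compare the paper's later use $g_1=M_2(f_1)$ for $F^{p^{n-1}}$), and your identification of $\sum_{j<p^m}F^j(f_{n-m})$ with $M_{n-m}(f_{n-m})$, after you had written $M_{n-m}=\sum_{j<p^{m+1}}F^j$, silently inherits the same slip.
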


\begin{proof}
(1) is trivial. (2) follows from (1) using induction. (3) we prove by  induction:  we prove that if $1\leq d\leq  p^{m}$ then the $n-m$-th part of $F^d$  is $x_{n-1}+\sum_{i=0}^{d-1} F^i(f_{n-m})$. 
Indeed, for $d=1$ this is correct. Now assume $d$. Then the $n-m$-th component of  $F^{d+1}=F^d\circ F$ equals
 $(x_{n-m}+\sum_{i=0}^{d-1} F^i(f_{n-m}) ) \circ F =x_{n-m}+f_{n-m}+\sum_{i=1}^{d} F^i(f_{n-m}) )$, proving the induction step.
\end{proof}

\subsection{Main theorems on invariants and an exact sequence}

The important special case we consider in all dimensions is the case where $\ord(F)=p^n$. On this, we want to prove the following two theorems:

\begin{theorem}\label{T1}
Let $F=(x_1+f_1,\ldots, x_n+f_n)\in \BA_n(k)$ where  $\ord(F)=p^n$. 
Then $\inv(F)=k[\tilde{x}_1,\ldots, \tilde{x}_n]$ where 
$\tilde{x}_i=x_i^p-a_i^{p-1}x_i+b_i$ where $a_i,b_i\in k[x_{i+1},\ldots, x_n]$. 
\end{theorem}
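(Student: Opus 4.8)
The plan is to induct on $n$, peeling off the last variable. The base case $n=1$ is the heart of the matter: here $F=(x_1+f_1)$ with $f_1\in k^*$ and $\ord(F)=p$. I would show directly that $\inv(F)=k[\tilde x_1]$ where $\tilde x_1 = x_1^p - f_1^{p-1}x_1$ (so $a_1=f_1$, $b_1=0$): note $F(x_1^p - f_1^{p-1}x_1) = (x_1+f_1)^p - f_1^{p-1}(x_1+f_1) = x_1^p + f_1^p - f_1^{p-1}x_1 - f_1^p = x_1^p - f_1^{p-1}x_1$ using $\kar k = p$, so this element is invariant; and conversely, since $k[x_1]$ is a free module of rank $p$ over $k[\tilde x_1]$ (as $\tilde x_1$ has degree $p$ in $x_1$) and $\inv(F)\supseteq k[\tilde x_1]$ while $\inv(F)\neq k[x_1]$ because $F\neq I$, a degree/rank count (or the fact that $k(x_1)/k(\tilde x_1)$ is a degree-$p$ Galois extension with group generated by $F$) forces equality.

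For the inductive step, suppose the result holds in dimension $n-1$. Consider $F\in\BA_n(k)$ with $\ord(F)=p^n$. By the lemma preceding the theorem, $F^p\in\BA_{n-1}(R[x_n])$ fixes $x_n$; write $R=k[x_n]$ and observe $\ord(F^p)=p^{n-1}$, so we may apply the dimension-$(n-1)$ statement (over the domain $R$, or after passing to the fraction field $k(x_n)$ and then descending coefficients as in Lemma \ref{lnd}) to conclude that $\inv(F^p)$ inside $k(x_n)[x_1,\ldots,x_{n-1}]$ is generated by elements $\tilde x_i = x_i^p - a_i^{p-1}x_i + b_i$ for $1\le i\le n-1$, together with $x_n$; one must check the $a_i,b_i$ can be taken in $k[x_{i+1},\ldots,x_n]$ rather than merely rational in $x_n$, which should follow from a Vandermonde/degree argument like the one in Lemma \ref{lnd}. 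Then $\inv(F)=\inv(F^p)\cap\inv(F)$, and on the ring $\inv(F^p)=k[\tilde x_1,\ldots,\tilde x_{n-1},x_n]$ the map $F$ acts as an automorphism of order $p$ (since $F^p$ acts trivially there); its action is still "strictly triangular" in the coordinates $\tilde x_1,\ldots,\tilde x_{n-1},x_n$ with the last coordinate moved by $f_n\in k^*$, so the $n=1$ analysis applied to the $x_n$-slot — carried through the other generators — yields the last generator $\tilde x_n = x_n^p - f_n^{p-1}x_n$ and shows nothing else survives.

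The main obstacle I anticipate is twofold: first, verifying that $F$ really does act as a genuine \emph{strictly triangular} order-$p$ automorphism on $k[\tilde x_1,\ldots,\tilde x_{n-1},x_n]$ — i.e. that $F(\tilde x_i)\in k[\tilde x_i,\tilde x_{i+1},\ldots,\tilde x_{n-1},x_n]$ and not something that breaks the triangular shape — and second, the descent step guaranteeing $a_i,b_i\in k[x_{i+1},\ldots,x_n]$ (polynomial, with the stated $p-1$ power structure on the linear coefficient) rather than lying only in the fraction field; getting the precise form $x_i^p - a_i^{p-1}x_i + b_i$ rather than a general additive polynomial $x_i^p - c_i x_i + b_i$ requires identifying $c_i$ as a $(p-1)$-st power, which presumably comes from tracking how $f_n, f_{n-1},\ldots$ propagate through the iteration $M_i=\sum_{j} F^j$. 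A secondary technical point is confirming that the hypothesis $\ord(F)=p^n$ (as opposed to smaller) is exactly what forces each $a_i\neq 0$, so that each $\tilde x_i$ genuinely has $x_i$-degree $p$ and the module-rank count closing the converse inclusion goes through at every stage.
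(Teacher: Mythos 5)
Your base case is fine (Artin--Schreier plus integral closure gives $k[x_1]^F=k[\tilde x_1]$), but the inductive step has a genuine gap exactly where you write that ``the $n=1$ analysis applied to the $x_n$-slot --- carried through the other generators --- \ldots shows nothing else survives.'' After restricting to $A=\inv(F^p)$ you are facing an order-$p$ strictly triangular automorphism of an $n$-variable polynomial ring whose last coordinate moves by $f_n\in k^*$; computing \emph{its} invariants is not a one-variable problem. The only known route is to first conjugate it inside $\BA$ to the pure translation $(x_n+f_n)$, i.e.\ to kill the components on $\tilde x_1,\ldots,\tilde x_{n-1}$, and for that you need to know that those components lie in $\im(N)$. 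What $G^p=I$ gives you is only $M(\cdot)=0$ for $M=\sum_{j=0}^{p-1}G^j$, so you need the exactness statement $\ker(M)=\im(N)$ for a one-variable translation over a coefficient ring --- this is the paper's Corollary \ref{Ring} and Lemma \ref{lemmaP}, and in the paper's induction the analogous step consumes Theorem \ref{T2} in dimension $n-1$; that is precisely why Theorems \ref{T1} and \ref{T2} are proved by a joint induction. Your plan never introduces the operator $M$ or any surjectivity/exactness statement, and your base case proves strictly less than the paper's dimension-one package (Lemma \ref{exact1} includes the exact sequences, not just the kernel), so the decisive step is unsupported.

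There are also two unresolved problems created by your choice of peeling off the \emph{last} variable via $F^p$ rather than the first via $F^{p^{n-1}}$ as the paper does. First, $F^p\in\BA_{n-1}(k[x_n])$ has its last moving component equal to $x_{n-1}+M_{n-1}(f_{n-1})$ with $M_{n-1}(f_{n-1})$ a nonzero \emph{nonunit} of $k[x_n]$, so you must invoke Theorem \ref{T1} over the ring $k[x_n]$ (a statement not proved, and whose natural proof again needs the exactness machinery), or over $k(x_n)$ followed by a descent; that descent is not a Vandermonde argument as in Lemma \ref{lnd}, because invariant-ring generators are not canonical and you must simultaneously preserve polynomiality and the $(p-1)$-power shape of the $a_i$. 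Second, for $F$ to act triangularly in the coordinates $\tilde x_1,\ldots,\tilde x_{n-1},x_n$ you need $F(\tilde x_i)-\tilde x_i\in k[\tilde x_{i+1},\ldots,x_n]$, which forces $a_i^{p-1}$ (hence essentially $a_i$) to be invariant under $F$ itself, not merely under $F^p$; the inductive hypothesis does not give this for an arbitrary choice of generators. The paper sidesteps both issues: $F^{p^{n-1}}=(x_1+g_1,x_2,\ldots,x_n)$ moves only $x_1$, with $g_1=M_2(f_1)$ automatically $F$-invariant, so $A=k[x_1^p-g_1^{p-1}x_1,x_2,\ldots,x_n]$, the restriction $F|_A$ is visibly strictly triangular of order $p^{n-1}$ over the same field $k$, and a single application of $\ker(M_2)=\im(N_2)$ (Theorem \ref{T2} in dimension $n-1$) conjugates away the residual first component and reduces everything to Theorem \ref{T1} in dimension $n-1$. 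To salvage your route you would at minimum have to prove the ring analogue of Theorem \ref{T1} for $k[x_n]$ and import the $\ker(M)=\im(N)$ statements; at that point you have essentially rebuilt the paper's joint induction.
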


\begin{theorem}\label{T2}
Let $F=(x_1+f_1,\ldots, x_n+f_n)$ such that $\ord(F)=p^n$. 
Then the sequence
\[ 0\lp (k^{[n]})^F \lp k^{[n]} \overset{N}{\lp} k^{[n]} \overset{M}{\lp}  (k^{[n]})^F \lp 0\]
and the sequence
\[ 0\lp \im(N) \lp k^{[n]}\overset{M}{\lp} k^{[n]} \overset{N}{\lp} \im(N) \lp 0 \]
are exact.  
\end{theorem}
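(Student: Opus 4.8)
The strategy is to establish the two claimed four-term exact sequences by induction on the dimension $n$, using the structural fact that $F^p$ fixes $x_n$ and hence restricts to an element of $\BA_{n-1}(k[x_n])$ of order $p^{n-1}$, together with a ``cyclic group cohomology'' interpretation of the maps $N=F-I$ and $M=M_1=\sum_{j=0}^{p^n-1}F^j$. Indeed, since $\ord(F)=p^n$, the ring $A:=k^{[n]}$ is a module over the group ring $k[C]$ where $C=\langle F\rangle\cong \Z/p^n\Z$; in this language $N$ is multiplication by $(F-1)$ and $M$ is multiplication by the norm element $\sum_{j}F^j$, and the two assertions are exactly the statements that $\ker N=\im M$, $\ker M=\im N$ — i.e. that $A$ is a \emph{cohomologically trivial} $k[C]$-module — phrased as four-term sequences once one also records $\inv(F)=\ker N$ (Theorem \ref{ThInvIm0}(1), or directly) and $\im M\subseteq \inv(F)$ (clear since $(F-1)M=F^{p^n}-1=0$).

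The key steps, in order, would be: (i) Reduce to showing the single identity $\ker N\subseteq \im M$ in each sequence, since the reverse inclusions $\im M\subseteq\ker N$ and $\im N\subseteq \ker M$ are immediate from $(F-1)\cdot(\sum_j F^j)=F^{p^n}-1=0=(\sum_j F^j)\cdot(F-1)$, and the left-exactness (injectivity of the first nontrivial map) is definitional. (ii) Handle the base case $n=1$: here $F=(x_1+c)$ with $c\in k^*$, $N$ is the difference operator $g(x_1)\mapsto g(x_1+c)-g(x_1)$ on $k[x_1]$, which lowers degree by exactly one and is surjective with kernel $k$, while $M=\sum_{j=0}^{p-1}F^j$ sends $g$ to $\sum_j g(x_1+jc)$; a direct computation (or the standard fact about the augmentation/norm on $k[\Z/p]$ acting on the polynomial ring, which is free over its subring of invariants $k[x_1^p-c^{p-1}x_1]$ by Theorem \ref{T1}) gives $\ker N=k=\im M$ and $\ker M=\im N$. (iii) Induction step: write $A=k^{[n]}$, let $B=k[x_n]$, so $F^p$ acts on $A=B^{[n-1]}$ as an element $\hat F\in\BA_{n-1}(B)$ of order $p^{n-1}$; apply the inductive hypothesis to $\hat F$ to control $\hat N=\hat F-I$ and $\hat M=\sum_{j=0}^{p^{n-1}-1}\hat F^j$, and separately apply the $n=1$ analysis to the order-$p$ action of $F$ on the relevant invariant subring. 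Then combine using the factorization $M=\big(\sum_{i=0}^{p-1}F^{ip^{n-1}}\big)\circ\big(\sum_{j=0}^{p^{n-1}-1}F^j\big)$ and a diagram chase (a short ``dévissage'': an extension of a cohomologically trivial module by a cohomologically trivial module is cohomologically trivial, here realized concretely by a snake-lemma argument on the two factorizations of $N$ and of $M$).

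The main obstacle I expect is step (iii): making the dévissage work without group-cohomology machinery, i.e. carrying out the diagram chase purely in terms of the explicit operators. The delicate point is that $F$ itself does \emph{not} preserve the subring $B=k[x_n]$ of the inductive picture in the naive way — rather $F$ acts on $A$ and $F^p$ is what restricts to $B^{[n-1]}$ — so one must be careful about which operators commute with which, and in particular verify that $\im(\sum_{j<p^{n-1}}F^j)$ and $\ker(\sum_{j<p^{n-1}}F^j)$ are stable under the residual order-$p$ action so that the inductive hypothesis can be chained with the base case. A clean way to organize this is to prove the stronger statement that for \emph{every} $d\mid p^n$ the module $A$ is cohomologically trivial for the order-$d$ subgroup $\langle F^{p^n/d}\rangle$ (equivalently, the two four-term sequences with $N$ replaced by $F^{p^n/d}-1$ and $M$ by the corresponding partial sum are exact), which makes the induction self-feeding; the factorization $M=M'\circ M''$ with $M'=\sum_{i<p}F^{ip^{n-1}}$, $M''=\sum_{j<p^{n-1}}F^j$ and the dual one $N = N$ relate the level-$p^n$ sequences to the level-$p$ and level-$p^{n-1}$ ones, and the exactness propagates by a two-row snake lemma. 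Once the chase is set up correctly the remaining verifications are routine.
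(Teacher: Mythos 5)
The central mechanism of your step (iii) fails. You want to deduce exactness for $\langle F\rangle$ from exactness for the subgroup generated by $F^p$ (applied inductively to $F^p\in\BA_{n-1}(k[x_n])$) together with the order-$p$ action on $\inv(F^p)$, and you propose the strengthening that $k^{[n]}$ is cohomologically trivial for every subgroup $\langle F^{p^n/d}\rangle$. That strengthening is false, and the subgroup input your d\'evissage needs is simply not available: vanishing of $\hat{H}^0$ and $\hat{H}^{-1}$ for the full cyclic group does not descend to subgroups here. Concretely, take $F=(x_1+x_2^{2p-1},\,x_2+1)\in\BA_2(k)$. Then $\ord(F)=p^2$, so Theorem \ref{T2} applies to $F$, but $F^p=(x_1+g_1,x_2)$ where $g_1=M_2(x_2^{2p-1})$ is a \emph{nonconstant} polynomial (for odd $p$, $g_1=x_2-x_2^p$; for $p=2$, $g_1=x_2^2+x_2+1$). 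Reducing modulo the ideal $(g_1)$, the map $F^p$ becomes the identity, so the norm $\sum_{i=0}^{p-1}(F^p)^i$ becomes multiplication by $p=0$; hence every element of its image lies in $g_1k^{[2]}$, and in particular $1\in\inv(F^p)$ is not in the image. Thus $\hat{H}^0(\langle F^p\rangle,k^{[2]})\neq 0$ even though the theorem holds for $F$ itself. The same computation applies in any dimension to $F^{p^{n-1}}=(x_1+g_1)$ whenever $g_1=M_2(f_1)\notin k^*$ (the paper's example $(x+y^pz^{p-1},y+z^{p-1},z+1)$ is such a case), so neither the index-$p$ subgroup $\langle F^p\rangle$ nor the order-$p$ subgroup $\langle F^{p^{n-1}}\rangle$ --- the one your displayed factorization of $M$ actually corresponds to --- supplies the cohomological triviality your snake-lemma chase requires; and even setting this aside, the inductive statement you would invoke for $F^p$ is over the ring $k[x_n]$, not over a field, which Theorem \ref{T2} does not provide. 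Any proof must therefore use more than the subgroup/quotient structure, and that is exactly what the paper does: Theorem \ref{T2} is proved jointly with Theorem \ref{T1}; one gets $\im(M)=\inv(F)=\ker(N)$ because $\im(M)$ is an ideal of $\inv(F)$ (Lemma \ref{SM3}) containing a unit, and $\ker(M)=\im(N)$ by restricting $N$ and $M$ to the finite-dimensional $F$-stable subspaces $V_{pd}$ of Definition \ref{defVd} --- whose construction, via Lemma \ref{Wdeg}, needs the explicit invariant generators furnished by $T_{\ref{T1}}[n]$ --- and comparing dimensions there.

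A smaller but real slip is in your base case: for $F=(x_1+c)$ with $c\in k^*$ in characteristic $p$, the kernel of $N$ is $k[\tilde{x}_1]$ with $\tilde{x}_1=x_1^p-c^{p-1}x_1$, not $k$, and $N$ is \emph{not} surjective (Lemma \ref{exact1} exhibits $x_1^{p-1}k[x_1^p]$ as a complement of $\im(N)$); likewise $\im(M)=k[\tilde{x}_1]$, not $k$. Your parenthetical alternative does salvage this case: $k[x_1]$ is free of rank one over the group ring $k[\tilde{x}_1][\Z/p\Z]$ on the generator $x_1^{p-1}$, since $N^i(x_1^{p-1})$ has degree $p-1-i$ with invertible leading coefficient and $M=N^{p-1}$ sends $x_1^{p-1}$ to $-c^{p-1}\in k^*$; but the explicit claims as written must be discarded, and in any event repairing the base case does not rescue the induction step above.
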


The proof of both theorems is rather involved,  as the proof of the $n$-dimensional case of any of the theorems involves the $n-1$-dimensional case of both theorems. In fact, if we denote $T_{\ref{T1}}[n]$  by the statement ``theorem \ref{T1} is true in dimension $n$'' 
and similarly $T_{\ref{T2}}[n]$, the proof will follow the following scheme:\\
\begin{itemize}
\item Prove $T_{\ref{T1}} [1]$ and $T_{\ref{T2}}[1]$,
\item Prove ($T_{\ref{T1}}[n-1]$, $T_{\ref{T2}}[n-1]$)$\lp$ $T_{\ref{T1}}[n]$,
\item Prove ($T_{\ref{T1}}[n]$, $T_{\ref{T2}}[n-1]$)$\lp$ $T_{\ref{T2}}[n]$.
\end{itemize}

\subsection{Generalities on linear maps of order $p^n$}

\begin{lemma} \label{SM1}Let $V$ be a $k$-vector space where $k$ is a field of characteristic $p$, and let $L:V\lp V$ be a  $k$-linear map such that $L^{p^m}=I$.
Then\\
(1)  $L=I+N$ where $N^{p^m}=0$,\\
(2) the only eigenvalue of $L$ is 1, \\
(3) $L$ is locally finite,\\
(4) $I+L+L^2+\ldots +L^{p^m-1}=N^{p^m-1}$.
\end{lemma}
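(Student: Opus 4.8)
The plan is to prove all four statements essentially by exploiting that $p$ is the characteristic, so the Frobenius-type identity $(I+N)^{p} = I + N^{p}$ holds for the linear operator $N = L - I$, and more generally $(I+N)^{p^j} = I + N^{p^j}$. First I would set $N := L - I$ and observe $L^{p^m} = (I+N)^{p^m} = I + N^{p^m}$ by iterating the characteristic-$p$ binomial identity (all the intermediate binomial coefficients $\binom{p}{i}$ vanish in $k$); since $L^{p^m} = I$ by hypothesis, this forces $N^{p^m} = 0$, giving (1). For (2), any eigenvalue $\lambda$ of $L$ satisfies $\lambda^{p^m} = 1$, and in characteristic $p$ the polynomial $T^{p^m} - 1 = (T-1)^{p^m}$ has $1$ as its only root, so $\lambda = 1$; alternatively this is immediate from (1) since $N$ is nilpotent. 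For (3), $L^{p^m} = I$ exhibits $L$ as satisfying a monic polynomial relation with constant coefficients (indeed $L^{p^m} = \sum_{i=0}^{p^m-1} a_i L^i$ with $a_0 = 1$ and the rest zero), which is exactly the definition of locally finite; equivalently, for any $v \in V$ the set $\{v, Lv, L^2 v, \ldots\}$ lies in the finite-dimensional span of $\{v, Lv, \ldots, L^{p^m-1}v\}$.

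The only statement requiring a small computation is (4). Here I would use the telescoping/geometric-series identity $(L - I)(I + L + \cdots + L^{p^m-1}) = L^{p^m} - I = 0$, but that alone only says the sum lies in $\ker N$, not that it equals $N^{p^m-1}$. Instead the cleanest route is to substitute $L = I + N$ directly: $I + L + \cdots + L^{p^m-1} = \sum_{j=0}^{p^m-1} (I+N)^j = \sum_{j=0}^{p^m-1} \sum_{i=0}^{j} \binom{j}{i} N^i = \sum_{i=0}^{p^m-1} \left( \sum_{j=i}^{p^m-1} \binom{j}{i} \right) N^i$, and by the hockey-stick identity $\sum_{j=i}^{p^m-1} \binom{j}{i} = \binom{p^m}{i+1}$. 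So the coefficient of $N^i$ is $\binom{p^m}{i+1} \bmod p$, which by Kummer's theorem (or Lucas' theorem) vanishes for $0 \le i \le p^m - 2$ and equals $1$ for $i = p^m - 1$. Hence the entire sum collapses to $N^{p^m-1}$, which is consistent with (1) since $N^{p^m-1}$ may be nonzero while $N^{p^m} = 0$.

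I expect the main (though still minor) obstacle to be statement (4): one must be careful that the binomial-coefficient bookkeeping is done over $k$, i.e. modulo $p$, and invoke the correct divisibility fact ($p \mid \binom{p^m}{j}$ for $1 \le j \le p^m - 1$, via Kummer/Lucas) rather than a characteristic-zero identity. Everything else is formal manipulation with the nilpotent operator $N$ and the characteristic-$p$ Frobenius identity, and statements (1)--(3) are essentially immediate once $N$ is introduced.
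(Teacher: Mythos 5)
Your proposal is correct and follows essentially the same route as the paper: set $N=L-I$, use the characteristic-$p$ binomial identity for (1)--(2), the span of $v,Lv,\ldots,L^{p^m-1}v$ for (3), and for (4) the same binomial expansion, interchange of sums, and hockey-stick identity $\sum_{n=i}^{p^m-1}\binom{n}{i}=\binom{p^m}{i+1}$, with the $p$-divisibility of $\binom{p^m}{i+1}$ for $i+1<p^m$ collapsing the sum to $N^{p^m-1}$. Your explicit appeal to Kummer/Lucas just makes precise a step the paper leaves implicit.
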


\begin{proof}
(1) Write $N=L-I$. Then $L^p=(I+N)^p=I+N^p$ so $L^p=I\desda N^p=0$.\\
(2) follows from all eigenvalues being unit roots of order $p^{m}$, and the fact that the only $p$-th root of 1 in characteristic $p$ is 1 (the only solution to $x^p-1$ is 1). \\
(3)  Let $w\in V$. Then $w,L(w),\ldots, L^{p^m-1}(w)$ spans a finite dimensional subspace $V_w$ of $V$ such that $w\in W$. Hence $L$ is locally finite.\\
(4) 
\[ \sum_{n=0}^{p^m-1} L^n=\sum_{n=0}^{p^m-1} (I+N)^n=
\sum_{n=0}^{p^m-1}\sum_{i=0}^n {n \choose i} N^i=
\sum_{i=0}^{p^m-1} \left(  \sum_{n=i}^{p^m-1} {n \choose i} \right) N^i=\]
\[
\sum_{i=0}^{p^m-1} {p^m\choose i+1} N^i=N^{p^m-1}.\\
\]
\end{proof}

\begin{remark} \label{SM1a} Note that $M:=N^{p^m-1}$.
\end{remark}

\begin{lemma}\label{SM3}
 Let $L$ be as in lemma \ref{SM1}. Define $N=L-I$ and $M=I+L+L^2+\ldots+L^{p^m-1}$.
The sequence
\[ 0\lp V^{L}\lp V\overset{N}{\lp} V\overset{M}{\lp} V^L\lp V^L/\im(M) \lp 0 \]
is a well-defined complex sequence.
The only non-trivial homology is $\ker(M)/\im(N)$. \\
If $V$ is not only a $k$-module but also a ring (i.e. a $k$-algebra), and $L$ is a ring homomorphism of $V$, then $\im(M)$ is an ideal of $V^L$.
\end{lemma}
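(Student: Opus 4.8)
The plan is to verify the three assertions of Lemma \ref{SM3} in turn: that the five-term sequence is a complex, that its only possibly-nontrivial homology sits at the middle $V\overset{N}\lp V\overset{M}\lp V^L$ spot, and that $\im(M)$ is an ideal of $V^L$ in the algebra case. Throughout I would work with the identity $M=N^{p^m-1}$ from Remark \ref{SM1a} (equivalently Lemma \ref{SM1}(4)), the factorization $L-I=N$, and the telescoping relations $NM=MN=L^{p^m}-I=0$ which follow from Lemma \ref{SM1}(1) since $NM = (L-I)(I+L+\cdots+L^{p^m-1})=L^{p^m}-I=0$ and likewise $MN=0$.

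First I would treat the outer maps. The leftmost arrow $V^L\hookrightarrow V$ is just the inclusion of the fixed subspace, which is well-defined because $V^L=\ker N$; composing with $N$ gives zero, so exactness (indeed injectivity and vanishing of the next composite) at the first two spots is immediate. At the fourth spot, $M$ does land in $V^L$: for $v\in V$ we have $L\cdot M(v) = (I+L+\cdots+L^{p^m})v - v + Mv$, and since $L^{p^m}=I$ this equals $Mv$, so $M(V)\subseteq V^L$; hence $V\overset{M}\lp V^L$ is well-defined and the rightmost map $V^L\twoheadrightarrow V^L/\im(M)$ is the canonical surjection, making the sequence exact at $V^L$ and at $V^L/\im(M)$ by construction. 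The composite $N\circ$ nothing and $M\circ N$ and the composite $(\text{quotient})\circ M$ all vanish for the reasons just given, so the whole sequence is a complex. This already shows the only place homology can be nonzero is at the middle $V$, where it is $\ker(M)/\im(N)$ — that is the content of the second assertion.

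For the ring statement, suppose $V$ is a $k$-algebra and $L$ is a $k$-algebra homomorphism. Then $V^L$ is a subalgebra (it is a subring containing $k$ since $L$ fixes $1$ and $k$). I want $\im(M)$ closed under multiplication by arbitrary elements of $V^L$. Take $w\in V^L$ and $M(v)\in\im(M)$ with $v\in V$; then $w\cdot M(v) = w\cdot\sum_{j=0}^{p^m-1}L^j(v) = \sum_{j=0}^{p^m-1} L^j(w)L^j(v) = \sum_{j=0}^{p^m-1}L^j(wv) = M(wv)$, where the middle equality uses $L^j(w)=w$ (as $w$ is $L$-fixed) and multiplicativity of $L^j$. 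Hence $w\cdot M(v)=M(wv)\in\im(M)$, so $\im(M)$ is an ideal of $V^L$; being moreover a $k$-subspace, it is an honest ideal.

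The only step that requires a moment's care — and hence the main (mild) obstacle — is checking that $M$ genuinely maps \emph{into} $V^L$, i.e. that $M(v)$ is $L$-fixed for every $v$; everything else is formal manipulation of the relations $N=L-I$, $MN=NM=0$, and $L^{p^m}=I$. For that I would use the identity $LM = M + (L^{p^m}-I) = M$ displayed above, or equivalently $NM=0$ together with $\ker N = V^L$, either of which gives $M(V)\subseteq V^L$ cleanly. No deeper input is needed; in particular the computation of where $\ker(M)/\im(N)$ sits and whether it vanishes is deferred to the later induction ($T_{\ref{T2}}[n]$), as the lemma only claims this is the sole spot where homology can occur.
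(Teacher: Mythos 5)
Your proof is correct and takes essentially the same route as the paper's: identify $V^L=\ker N$, use $LM=M$ (equivalently $NM=MN=0$, via $M=N^{p^m-1}$ or the telescoping $(L-I)M=L^{p^m}-I=0$) to get $\im(M)\subseteq V^L$ and the vanishing of the composites, and the same computation $w\cdot M(v)=\sum_j L^j(wv)=M(wv)$ for the ideal claim. One small slip: in your displayed identity the trailing ``$+Mv$'' is spurious --- the correct telescoping is $L\,M(v)=(I+L+\cdots+L^{p^m})v-v=Mv$ using $L^{p^m}=I$ --- but your alternative justification via $NM=0$ together with $\ker N=V^L$ already settles that $M$ maps into $V^L$, so the argument stands.
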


\begin{proof}
{\em Exact at the first $V^L$} is trivial. \\
{\em Exact at the first $V$} follows since $\ker(N)=\ker(L-I)=\{v\in V~|~L(v)=v\}=L^V$. \\
{\em Well-defined at the second $V$} follows since from lemma \ref{SM1} we see that $M=N^{p^m-1}$ and thus $NM=0$.\\
{\em Well-defined at the second $V^L$}: $M(v)=v+Lv+\ldots+L^{v^m-1}v$ is invariant under $L$ so $\im(M)\subseteq V^L$. Exactness is trival.\\
For the last sentence: notice that $V^L$ is automatically a $k$-algebra. Now let us show that
$\im(M)$ is an ideal. Let $w\in \im(M)$, $v\in V^L$. Then there eixsts $u\in V: M(u)=w$. Now
$M(uv)=\sum_{i=0}^{p^m-1} L^i(uv)=$ (using that $L$ is a ring homomorphism and $L(v)=v$) $=\sum_{i=0}^{p^m-1} L^i(u)v=M(u)v$ hence $wv\in V^L$. Since $\im(M)$ is a linear subspace of $V^L$, we are done.
\end{proof}

In the next sections, $V=k^{[n]}$ and $L=F=(x_1+f_1,\ldots, x_n+f_n)\in \BA_n(k)$.

\subsection{Dimension 1}

Note that below, $N,M$ are as in lemma \ref{SM1} and definition \ref{SM1a}.
We have $n=1$ here,so  $F$ of order $p$ is equivalent to  $f_1\in k^*$.

\begin{lemma}  
\label{exact1}
If $F=(x_1+f_1)$ where $f_1\in k^*$, then $k[x_1]^F=k[\tilde{x}_1]$ where $\tilde{x}_1=x_1^p-f_1^{p-1}x_1$ and 
the sequences
\[ 0\lp k[\tilde{x}_1] \lp k[x_1] \overset{N}{\lp} k[x_1]  \overset{M}{\lp}  k[\tilde{x}_1] \lp 0\]
\[ 0\lp \im(N) \lp k[x_1]\overset{M}{\lp} k[x_1] \overset{N}{\lp} \im(N) \lp 0 \]
are exact.  \\
A representant system for $k[x_1]/\im(N)$ is $x_1^{p-1}k[\tilde{x}_1]$. Another representant system is $x_1^{p-1}k[x_1^p]$.
\end{lemma}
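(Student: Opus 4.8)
The plan is to treat the one‑variable case by a direct, hands‑on computation, since here $F=(x_1+f_1)$ with $f_1\in k^*$ is essentially the affine map $x_1\mapsto x_1+f_1$ and everything can be made explicit. First I would rescale: after conjugating by $(\,f_1^{-1}x_1)$ (or simply substituting $x_1\mapsto f_1 x_1$) we may assume $f_1=1$, so $F(x_1)=x_1+1$, $N=F-I$ is the difference operator $N(g)(x_1)=g(x_1+1)-g(x_1)$, and $M=N^{p-1}=\sum_{i=0}^{p-1}F^i$ by Lemma \ref{SM1}(4) and Remark \ref{SM1a} (with $m=1$); at the end I translate the answer back, which turns $\tilde x_1=x_1^p-x_1$ into $\tilde x_1=x_1^p-f_1^{p-1}x_1$. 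The operator $N$ lowers degree by exactly one on nonconstant polynomials (the leading term of $N(x_1^d)$ is $d\,x_1^{d-1}$), so $\ker(N)$ consists of the polynomials $g$ with $N(g)=0$; since $N(x_1^p-x_1)=0$ (as $(x_1+1)^p-(x_1+1)=x_1^p+1-x_1-1=x_1^p-x_1$) and $k[x_1^p-x_1]$ is a polynomial subring over which $k[x_1]$ is free of rank $p$ with basis $1,x_1,\dots,x_1^{p-1}$, one checks $N$ is injective on $\bigoplus_{j=1}^{p-1}k[\tilde x_1]\,x_1^{j}\oplus$ (nonconstant part of $k[\tilde x_1]$) — more cleanly, $\ker(N)=k[\tilde x_1]$: any $g$ with $N(g)=0$ is a "polynomial in the first difference sense" constant, and a degree count using the rank‑$p$ freeness forces $g\in k[\tilde x_1]$. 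This gives $k[x_1]^F=\ker(N)=k[\tilde x_1]$, the first claim, and exactness at the first $V^L$ and first $V$ in both four‑term sequences.

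Next I would establish the representant system. Using the free basis $1,x_1,\dots,x_1^{p-1}$ of $k[x_1]$ over $k[\tilde x_1]$, write any $g=\sum_{j=0}^{p-1} c_j(\tilde x_1)\,x_1^{j}$. I claim $g\equiv (\text{something in } x_1^{p-1}k[\tilde x_1]) \pmod{\im(N)}$, and the representant is unique. For surjectivity of the reduction: $N$ raises the "$x_1$‑degree mod $p$" in a controlled way — applied to $x_1^{p-1}$ times a polynomial in $\tilde x_1$ of $\tilde x_1$‑degree $e$, $N$ produces (leading term) a scalar multiple of $x_1^{pe+p-2}$, i.e. the top piece lands in the span of $x_1^{j}k[\tilde x_1]$ with $j\le p-2$; running this in decreasing order of degree lets me kill, for any $g$, all components $c_j(\tilde x_1)x_1^{j}$ with $j\le p-2$ by subtracting suitable $N(h)$, leaving a representative in $x_1^{p-1}k[\tilde x_1]$. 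For uniqueness: if $u\in x_1^{p-1}k[\tilde x_1]$ lies in $\im(N)$, say $u=N(h)$, a leading‑term/degree argument (or the exactness I prove below) forces $u=0$; concretely, $N(h)$ for $h$ of $x_1$‑degree $d$ has $x_1$‑degree $d-1$, and one shows the top $x_1$‑adic digit of $N(h)$ can never be the pure digit $p-1$ unless it vanishes. Since $x_1^{p-1}k[x_1^p]$ and $x_1^{p-1}k[\tilde x_1]$ have the same "leading" behaviour (both are complements, as graded $k$‑spaces, of $\im(N)$), the final sentence follows: $\tilde x_1 = x_1^p - f_1^{p-1}x_1 \equiv x_1^p \pmod{\text{lower degree}}$, so $x_1^{p-1}\tilde x_1^{\,m}$ and $x_1^{p-1}x_1^{pm}$ agree modulo $\im(N)$ by the already‑proved uniqueness, giving the second representant system.

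Finally I would read off exactness of the two four‑term sequences. By Lemma \ref{SM3}, both are complexes with the only possible nontrivial homology at the middle spot $\ker(M)/\im(N)$ (for the first) respectively $\ker(N)/\im(M)$ (for the second); and the cokernel terms $k[\tilde x_1]/\im(M)$ and $\im(N)$ are as written once I show $\im(M)=\ker(N)=k[\tilde x_1]$ and $\im(N)=\ker(M)$. For $\im(M)=k[\tilde x_1]$: $M=N^{p-1}$, so $\im(M)\subseteq \ker(N)=k[\tilde x_1]$, and conversely $M$ applied to the degree‑$p e$ monomial $x_1^{pe}$ (leading term) produces a nonzero scalar times $\tilde x_1^{\,e}$ up to lower terms — here the scalar is $\binom{p^1}{\,\cdot\,}$‑type but more simply $M$ has leading coefficient $1$ in the appropriate sense because $M=\sum_{i=0}^{p-1}F^i$ fixes $1,\tilde x_1,\dots$ and acts as multiplication‑by‑$p=0$‑plus‑correction; the clean statement is that $M\colon k[x_1]\to k[\tilde x_1]$ is surjective, proved by the same decreasing‑degree induction, so $\im(M)=k[\tilde x_1]$. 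Then $\ker(M)/\im(N)=0$ is forced because $M$ factors as $k[x_1]\twoheadrightarrow k[x_1]/\im(N)\cong x_1^{p-1}k[\tilde x_1] \xrightarrow{\ \sim\ } \im(M)=k[\tilde x_1]$ — the induced map on the $p$‑element basis side being a degree‑preserving bijection — hence $\ker(M)=\im(N)$, i.e. both middle homologies vanish and both sequences are exact.

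\medskip

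\noindent\textbf{Main obstacle.} The genuinely fiddly point is the bookkeeping of leading terms of $N(x_1^d)$ and $M(x_1^d)$ in characteristic $p$: $N(x_1^d)$ has leading coefficient $d$, which \emph{vanishes} when $p\mid d$, so the naive "$N$ drops degree by one" fails exactly on $p$‑th power degrees, and one must track the $x_1$‑adic expansion (digits in base... well, the "digit" being the residue $j\in\{0,\dots,p-1\}$ with respect to the free basis) rather than the ordinary degree. Getting the surjectivity/uniqueness of the $x_1^{p-1}k[\tilde x_1]$ (and $x_1^{p-1}k[x_1^p]$) representant systems right is where the real work sits; everything else is a formal consequence of Lemma \ref{SM3} together with $\im(M)=\ker(N)$ and $\im(N)=\ker(M)$.
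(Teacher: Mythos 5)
Your plan is workable in outline, and its middle part genuinely differs from the paper: you construct the complement of $\im(N)$ directly by an $N$-reduction (downward induction on degree, using that $N(x_1^{d+1})$ has leading coefficient $(d+1)f_1\neq 0$ when $d\not\equiv p-1 \pmod p$) plus a ``digit'' argument, whereas the paper gets the splitting from $M$ and proves $\ker(M)=\im(N)$ by a dimension count on the finite-dimensional pieces of degree $\le pd_0+p-1$. However, there is a concrete gap exactly at the point you flag as the crux. Your claim that $M$ applied to $x_1^{pe}$ has leading term a nonzero scalar times $\tilde{x}_1^{\,e}$ is impossible: $M=N^{p-1}$ and $N$ strictly lowers degree, so $\deg M(x_1^{pe})\le pe-(p-1)<pe=\deg\tilde{x}_1^{\,e}$; worse, for odd $p$ one has $M(x_1^{p})=\sum_{i=0}^{p-1}(x_1+if_1)^p=px_1^p+f_1^p\sum_i i=0$. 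In general the leading coefficient of $M(x_1^{d})$ is $-\binom{d}{p-1}f_1^{p-1}$, which vanishes unless $d\equiv p-1\pmod p$, so the proposed ``same decreasing-degree induction'' for surjectivity of $M$ does not run on the monomials you name, and your final step --- that $M$ factors as $k[x_1]\twoheadrightarrow k[x_1]/\im(N)\cong x_1^{p-1}k[\tilde{x}_1]\to\im(M)$ with the last map a degree-preserving bijection --- is asserted but never substantiated, since it needs injectivity of $M$ on $x_1^{p-1}k[\tilde{x}_1]$. Note that surjectivity of $M$ onto $k[\tilde{x}_1]$ is part of the lemma (the first sequence ends in $k[\tilde{x}_1]\to 0$), so this is not a side issue.

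The repair is the single computation the paper makes the linchpin: since $N$ lowers degree by one with leading coefficient $df_1$ on degrees $1,\dots,p-1$, one gets $M(x_1^{p-1})=N^{p-1}(x_1^{p-1})=(p-1)!\,f_1^{p-1}=-f_1^{p-1}\in k^*$, hence $M\bigl(x_1^{p-1}g(\tilde{x}_1)\bigr)=-f_1^{p-1}g(\tilde{x}_1)$ for all $g$. This gives at once $\im(M)=k[\tilde{x}_1]$, injectivity of $M$ on $x_1^{p-1}k[\tilde{x}_1]$, and then, combined with your splitting $k[x_1]=\im(N)\oplus x_1^{p-1}k[\tilde{x}_1]$, the equality $\ker(M)=\im(N)$ without the paper's filtration count. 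Two smaller cautions: make the uniqueness of your representant system self-contained (do not fall back on ``the exactness I prove below'', since you later deduce that exactness from this uniqueness); the direct argument does work: writing $h=\sum_{j=0}^{p-1}c_j(\tilde{x}_1)x_1^{j}$ and using invariance of the $c_j$, one has $N(h)=\sum_j c_j(\tilde{x}_1)N(x_1^{j})$ with $\deg_{x_1}N(x_1^{j})\le p-2$, so $\im(N)\cap x_1^{p-1}k[\tilde{x}_1]=0$. Also, your kernel computation and the passage between the two representant systems (via $\tilde{x}_1^{\,m}=x_1^{pm}+\text{lower }p\text{-th powers}$ and triangularity) are fine and essentially as in the paper.
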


\begin{proof}
Since $\ord(F)=p$ this is a special case of \ref{SM3}.
We need to check a few things:\\
{\underline{(1) $k[x_1]^F=k[\tilde{x}_1]$.}} 
Note that $\tilde{x}_1\in k[x_1]^F$ indeed. Let $f\in k[x_1]^F$ be the lowest degree polynomial which is not in $k[\tilde{x}_1]$. 
Since we can reduce $f$ by $\tilde{x}_1^m$, we can assume that $\deg(f)=m$ is not a multiple of $p$.
So assume $f=a_mx_1^m+a_{m-1}x_1^{m-1}+\ldots$ where $a_i\in k$ and $(m,p)=1$. Then the coefficient of $x_1^{m-1}$ of $f(x_1+f_1)-f(x_1)$ turns out to be $a_m m f_1 $ which is nonzero. This is a contradiction, so there exists no such $f$, and thus  $k[x_1]^F=k[\tilde{x}_1]$.\\
\underline{(2) $\im(M)=k[\tilde{x}_1]=\ker(N)$.}
Note that $N$ decreases degree: hence,  $M(x_1^{p-1})=N^{p-1}(x_1^{p-1})\in k$. 
But the constant term is $\sum_{i=0}^{p} (x_1+i f_1)^{p-1}=\sum_{i=0}^p  i^{p-1} f_1^{p-1}=-f_1^{p-1}\in k^*$. 
Hence, $1\in \im(M)$ so the ideal $\im(M)=k[\tilde{x}_1]$. \\
\underline{(3) $\ker(M)/\im(N)=0$.} 
The sequence is now exact if $\ker(M)/\im(N)=0$. 
If the vector spaces would be finite dimensional, then this result follows from the fact that $\ker(N)=\im(M)$. 
We will restrict to
finite dimensional subspaces to conclude the result:
Define $k[x_1]_d=$ the set of polynomials of degree $d$ and less.
We claim that the sequence
\[ 0\lp  k[\tilde{x}_1]_d  \overset{i}{\hookrightarrow} k[x_1]_d \overset{N}{\lp} k[x_1]_d  \overset{M}{\lp} k[\tilde{x}_1]_d \lp 0 \]
is exact {\bf if $d=d_0p+p-1$} for some $d_0\in \N$  (it will NOT be exact for other $d$ !).
Now if $g\in k[\tilde{x}_1]$ then $M(x_1^{p-1}g)=g$; hence
$k[\tilde{x}_1]_{pd_0+p-1}
\supseteq M(k[x_1]_{d})\supseteq M(x_1^{p-1}\cdot k[\tilde{x}_1]_{pd_0})=k[\tilde{x}_1]_{pd_0}=k[\tilde{x}_1]_{pd_0+p-1}$ 
and thus  $\im(M|_{k[x_1]_d})=\ker(N|_{k[x_1]_d})$, a vector space of dimension $d_0$. 
Thus, $\dim(\ker(M|_{k[x_1]_d})=\dim(\im(N_{k[x_1]_d}))$ and we can conclude that since $\ker(M|_{k[x_1]_d})\supseteq 
\im(N_{k[x_1]_d})$ that they must be equal. If we now take unions, we get
\[ \ker(M)=\bigcup_{d\in p\Z+p-1} \ker(M|_{k[x_1]_d})=\bigcup_{d\in p\Z+p-1} \im(N_{k[x_1]_d})=\im(N) .\]
\underline{(4) The second sequence is exact} since $\ker(M)=\im(N)$ by the first exact sequence, and by the fact that $\im(M)=\ker(N)=k[\tilde{x}_1]$.

Now let us determine a \underline{(5) representant system} for $k[x_1]/\im(N)$. Since  $N(x_1^{p-1}g(\tilde{x}_1))=g(\tilde{x}_1)$, $\ker(N)\cap x_1^{p-1}k[\tilde{x}_1]=\{0\}$.
Thus $x_1^{p-1}k[\tilde{x}_1]$ is a representant system of $k[x_1]/\ker(N)=k[x_1]/\im(M)$.

Now notice that if $(m,p)=1$, then $\deg(N(x_1^m))=m-1$. This means that we have polynomials of all degrees $d$ as long as
$d\mod{p}\not =p-1$.
Note that $\deg(\tilde{x}_1)=p$, and notice that we thus have
$x_1^{p-1}\tilde{x}_1^m\mod(\ker(N)) = x_1^{p-1}(x_1^{pm}+a_{m-1}x_1^{p(m-1)}
+\ldots +a_1x_1^p+a_0)$ for some $a_i\in k$. These elements form a $k$-basis of a new representant system for $k[x_1]/\ker(N)$.
This means that $\{x_1^{p-1}(x_1^{pm})~|~m\in \N\}$ forms a $k$-basis of this new representant system,
i.e. $x_1^{p-1}k[x_1^p]$ is another representant system.
\end{proof}

\begin{corollary} \label{Ring}
Let $R$ be a commutative domain of characteristic $p$. 
If $F=(x_1+f_1)$ where $f_1\in R^*$, then $R[x_1]^F=R[\tilde{x}_1]$ where $\tilde{x}_1=x_1^p-f_1^{p-1}x_1$ and 
the sequences
\[ 0\lp R[\tilde{x}_1] \lp R[x_1] \overset{N}{\lp} R[x_1]  \overset{M}{\lp}  R[\tilde{x}_1] \lp 0\]
\[ 0\lp \im(N) \lp R[x_1]\overset{M}{\lp} R[x_1] \overset{N}{\lp} \im(N) \lp 0 \]
are exact.  \\
A representant system for $R[x_1]/\im(N)$ is $x_1^{p-1}R[\tilde{x}_1]$. Another representant system is $x_1^{p-1}R[x_1^p]$.
\end{corollary}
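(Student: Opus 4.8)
The plan is to reduce the statement over the domain $R$ to the already-proven field case, Lemma \ref{exact1}, by passing to the quotient field $k$ of $R$ and then descending. First I would observe that all the objects in sight behave well under the inclusion $R[x_1]\hookrightarrow k[x_1]$: the map $N=F-I$ and the map $M=I+F+\cdots+F^{p-1}$ are defined by the same formulas over $R$ and over $k$, and $\tilde x_1=x_1^p-f_1^{p-1}x_1$ lies in $R[x_1]$ since $f_1\in R^*\subseteq R$. So the four-term complexes over $R$ are literally subcomplexes of those over $k$.

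The key steps, in order, would be: (1) \emph{Invariants.} Show $R[x_1]^F=R[\tilde x_1]$. The inclusion $\supseteq$ is immediate. For $\subseteq$, if $f\in R[x_1]^F$ then $f\in k[x_1]^F=k[\tilde x_1]$ by Lemma \ref{exact1}, so $f=\sum c_i\tilde x_1^i$ with $c_i\in k$; comparing coefficients of the two sides (using that $\tilde x_1$ is monic in $x_1$ of degree $p$, so the $\tilde x_1^i$ form an $R$-basis of a free $R$-submodule and a $k$-basis of the corresponding $k$-subspace) forces each $c_i\in R$. Exactly the same leading-coefficient bookkeeping — essentially repeating the argument in part (1) of Lemma \ref{exact1} with ``$a_i\in k$'' replaced by ``$a_i\in R$'' and using that $m$ is a unit's worth of cancellation requires only $(m,p)=1$, which holds in any characteristic-$p$ domain — gives the result directly without even invoking the field case, and I would present whichever is shorter. (2) \emph{Surjectivity of $M$ onto $R[\tilde x_1]$ and $\ker N = R[\tilde x_1]$.} Here I would reuse the explicit computation from Lemma \ref{exact1}: $M(x_1^{p-1})=N^{p-1}(x_1^{p-1})=-f_1^{p-1}\in R^*$, so $1\in\im(M)$; since $L=F$ is a ring homomorphism, Lemma \ref{SM3} (applied with $V=R[x_1]$, which is a $k$-algebra only if $k\subseteq R$ — so here one instead notes $\im(M)$ is an ideal of $R[\tilde x_1]$ by the same one-line computation $M(uv)=M(u)v$ for $v\in R[x_1]^F$) shows $\im(M)=R[\tilde x_1]$. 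And $\ker(N)=\{g : F(g)=g\}=R[x_1]^F=R[\tilde x_1]$ by step (1). (3) \emph{Exactness at the middle, i.e. $\ker(M)=\im(N)$.} This is the one genuinely complex-looking point. The filtration argument in part (3) of Lemma \ref{exact1} is by degree in $x_1$ and uses only: $N$ strictly decreases degree on monomials $x_1^m$ with $(m,p)=1$; $M(x_1^{p-1}g(\tilde x_1))=g(\tilde x_1)$; and a dimension count over a field. Over a domain the dimension count is replaced by a rank count of free $R$-modules $R[x_1]_d$, $R[\tilde x_1]_d$ (finitely generated free, since $\tilde x_1$ is monic), and the same conclusion $\ker(M|_{R[x_1]_d}) = \im(N|_{R[x_1]_d})$ for $d=pd_0+p-1$ follows because an inclusion of free $R$-modules of equal finite rank whose cokernel is torsion-free (here it is a submodule of the free module $R[x_1]_d$) is an equality; then take unions. (4) The second four-term sequence then follows formally from $\ker(M)=\im(N)$ and $\im(M)=\ker(N)=R[\tilde x_1]$, exactly as in Lemma \ref{exact1}(4). (5) The representant-system claims: $N(x_1^{p-1}g(\tilde x_1))=g(\tilde x_1)$ shows $x_1^{p-1}R[\tilde x_1]\cap\ker(N)=0$, so it is a representant system for $R[x_1]/\ker(N)=R[x_1]/\im(M)$; and re-expanding $x_1^{p-1}\tilde x_1^m$ in the basis $x_1^{p-1}x_1^{pm}$ (a unitriangular, hence $R$-invertible, change of basis since $\tilde x_1$ is monic) gives $x_1^{p-1}R[x_1^p]$ as the other representant system — the proof of $R[x_1]/\im(N)$ having these representants is identical to Lemma \ref{exact1}(5), reading $a_i\in R$ throughout.

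The main obstacle I anticipate is step (3): over a field one freely uses ``subspace of equal dimension implies equality,'' and over a domain one must instead argue via ranks of finitely generated free $R$-modules together with the torsion-freeness of the relevant cokernel — this is where the hypothesis that $R$ is a domain is actually used, and one has to be a little careful that $R[x_1]_d$ and $R[\tilde x_1]_d$ really are free of the ranks claimed (which holds because $\tilde x_1 = x_1^p - f_1^{p-1}x_1$ is monic in $x_1$, so $\{x_1^{p-1}\tilde x_1^m\}$ is part of an $R$-basis of $R[x_1]$). Everything else is a routine rerun of Lemma \ref{exact1} with scalars in $R$ in place of $k$, exploiting that $f_1\in R^*$ keeps $-f_1^{p-1}$ and $m$ (for $(m,p)=1$) invertible where invertibility was used.
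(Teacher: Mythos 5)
Your overall strategy (descend from the quotient field $k$ of $R$ and rerun Lemma \ref{exact1}) is the same as the paper's, and steps (1), (2) and (4) are fine. The genuine gap is in step (3). The cokernel of the inclusion $\im(N|_{R[x_1]_d})\subseteq\ker(M|_{R[x_1]_d})$ is the quotient $\ker(M|_{R[x_1]_d})/\im(N|_{R[x_1]_d})$, a \emph{subquotient} of $R[x_1]_d$, not a submodule of it, so your parenthetical justification of its torsion-freeness is simply false. The equal-rank count (which is correct, and does not even need freeness of $\im(N|_{R[x_1]_d})$ -- good, since finitely generated torsion-free modules over a general domain need not be free) only shows that this quotient is a torsion module. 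Its torsion-freeness is equivalent to the saturation statement $\im(N|_{R[x_1]})=\im(N|_{k[x_1]})\cap R[x_1]$, i.e.\ that if $rg=N(h)$ with $0\neq r\in R$, $g\in R[x_1]$, $h\in R[x_1]$, then $g\in N(R[x_1])$. But this saturation is exactly the point where the ring case differs from the field case and is in essence what the corollary asserts about $\ker(M)=\im(N)$ over $R$; assuming it without proof makes the argument circular. Step (5) inherits the same issue: that $x_1^{p-1}R[\tilde x_1]$ \emph{spans} $R[x_1]$ together with $\im(N|_{R[x_1]})$ is again this saturation, not a formal consequence of the field case.

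To close the gap you must prove the saturation. The paper does so by a base-change argument: $N$ and $M$ send the monomial basis of $R[x_1]$ into the subring generated over $\F_p$ by $f_1$, so their images over $R$ are the $R$-spans of their images over that small subring, which can be compared with the $k$-spans. A self-contained alternative that fits your outline: use the $R$-basis $\{x_1^{j}\tilde x_1^{i}:\ 0\le j\le p-1,\ i\ge 0\}$ of $R[x_1]$ ($\tilde x_1$ is monic of degree $p$ with coefficients in $R$); since $\tilde x_1$ is $F$-invariant, $N(x_1^{j+1}\tilde x_1^{i})=N(x_1^{j+1})\tilde x_1^{i}=(j+1)f_1\,x_1^{j}\tilde x_1^{i}+(\text{terms }x_1^{j'}\tilde x_1^{i}\text{ with }j'<j)$, and $(j+1)f_1\in R^*$ for $0\le j\le p-2$. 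This unit-triangular shape shows that $\im(N|_{R[x_1]})$ contains the free summand with basis $\{x_1^{j}\tilde x_1^{i}:\ 0\le j\le p-2,\ i\ge0\}$; combined with $\im(N|_{k[x_1]})\cap x_1^{p-1}k[\tilde x_1]=0$ from Lemma \ref{exact1}, this gives $R[x_1]=\im(N|_{R[x_1]})\oplus x_1^{p-1}R[\tilde x_1]$, hence the saturation, exactness at the middle, and both representant systems in one stroke. Finally, the identity you quote from Lemma \ref{exact1}(5) is off (a slip already present in the paper): $N(x_1^{p-1}g(\tilde x_1))\neq g(\tilde x_1)$ in general; the computation you actually need is $M(x_1^{p-1}g(\tilde x_1))=-f_1^{p-1}g(\tilde x_1)$ with $-f_1^{p-1}\in R^*$.
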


\begin{proof}
Obviously if $k$ is the quotient field of $R$, the result follows from lemma \ref{exact1}.
We need to check that intersecting from $k[x_1]$ to $R[x_1]$ everything goes well. 
First of all, $R[x_1]^F=k[x_1]^F\cap R[x_1]=k[\tilde{x_1}]\cap R[x_1]=R[\tilde{x}_1]$. 
Then, we need to check that the maps $N$ and $M$ do not miss things in their images; i.e. we need to check that $\im(N|_{R[x_1]})=\im(N|_{k[x_1]})\cap R[x_1]$, whereas a priori we only have $\subseteq$ in stead of $=$. (Note that  $\ker(N|_{R[x_1]})=\ker(N|_{k[x_1]})\cap R[x_1]$
as well as $\ker(M|_{R[x_1]})=\ker(M|_{k[x_1]})\cap R[x_1]$ trivially.)
$R[x_1]$ is a free $R$-module with basis $1,x_1,x_1^2,\ldots$. $M$ and $N$ send these basis elements into $\F_p(f_1)[x_1]$ (which is a subring of $R[x_1]$), and thus 
$\im(N|_{R[x_1]})=R\cdot\im(N|_{\F_p(f_1)[x_1]})=k\cdot\im(N|_{\F_p(f_1)[x_1]})\cap R[x_1]= \im(N|_{k[x_1]})\cap R[x_1]
= \ker(N|_{k[x_1]})\cap R[x_1]=\ker(N|_{R[x_1]})$. A similar proof for $M$. 

The results on the representant system follow by a similar argument:
A basis of the representant system over $k$ is $\{x_1^{p-1}\tilde{x}_1^i; i\in \N\}$. Then this can be used as a basis for 
the representant system of $R[x_1]/\im(N|_{R[x_1]})$ as well.
\end{proof}

\subsection{Induction step  $T_{\ref{T1}}[n]$ from $T_{\ref{T1}}[n-1]$ and $T_{\ref{T2}}[n-1]$}

In the rest of this section, we will consider $F=(x_1+f_1,\ldots, x_n+f_n)\in \BA_n(k)$ of order $p^n$. 
Define $F_i=x_i+f_i$ and $\vec{F}_i=(F_i,F_{i+1},\ldots, F_n)$.
We define $N=F-I$ and $M=\sum_{i=0}^{p^n-1} F^i$ as before, but also define 
\[ N_i:=\vec{F}_i-I_i = N|_{ k[x_i,\ldots, x_n]}\]
Notice that $\vec{F}_1=F, N_1=N$.

\begin{lemma} \label{lemmaP} Let $R$ be a domain of characteristic $p$. Let $F=(x_1+f_1,\ldots, x_n+f_n)\in \BA_n(R)$, where $f_n\in R^*$. 
Then $F^p=I$ if and only if $F$ can be conjugated by some $\tau\in \BA_{n-1}(R[x_n])$ to
$\tau^{-1}F\tau=(x_n+f_n)$. 
\end{lemma}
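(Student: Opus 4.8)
The plan is to prove the two directions separately, using the structural results about $F^p$ established earlier and Corollary~\ref{Ring} as the base case of an induction on $n$.

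One direction is immediate: if $F$ is conjugate by some $\tau \in \BA_{n-1}(R[x_n])$ to $G := (x_n+f_n)$, then $G^p = (x_n + p f_n) = (x_n) = I$ since $\kar(R) = p$, so $F^p = \tau G^p \tau^{-1} = I$. (Here $\tau$ is an automorphism of $R[x_1,\dots,x_n]$ fixing $x_n$, so it makes sense to conjugate.)

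For the substantive direction, assume $F^p = I$. First I would peel off $x_n$: by part~(1) of the lemma on powers of $F$, since $f_n \in R^*$, we may conjugate $F$ by a suitable $(x_{n-1}+g_{n-1})$ and successively by lower-index elementary maps to kill the components $f_1,\dots,f_{n-1}$ one at a time. The key point is that conjugating by $(x_i+g_i)$ changes $f_i$ into $f_i + N(g_i)$ (Lemma~\ref{conj}), so to remove $f_{n-1}$ we must solve $N(g_{n-1}) = -f_{n-1}$ with $g_{n-1} \in R[x_n]$; but $N$ restricted to $R[x_n]$ is exactly the operator $N$ of Corollary~\ref{Ring} applied to $F_n = (x_n+f_n)$ on $R[x_n]$, and there we have an explicit description of $\im(N)$ on $R[x_n]$ via the representant system $x_n^{p-1}R[x_n^p]$. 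So in general I would set this up as an induction: the induction hypothesis (in dimension $n-1$ over the ring $R[x_n]$, or rather applied to $\vec{F}_2 = (x_2+f_2,\dots,x_n+f_n)$ over $R[\,]$... — actually cleanest is: induct on $n$, with Corollary~\ref{Ring} giving $n=1$). Conjugating by elements of $\BA_{n-1}(R[x_n])$ and repeatedly applying the description of images of the relevant $N$'s, one reduces $F$ to a map of the form $(x_1+f_1', \dots, x_{n-1}+f_{n-1}', x_n+f_n)$ where each $f_i'$ lies in a fixed transversal of $\im(N|_{R[x_i,\dots,x_n]})$, and I claim $F^p = I$ forces all these $f_i' = 0$.

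The main obstacle — and the heart of the proof — is exactly this last claim: showing that once $F$ is in the reduced form, $F^p = I$ forces the remaining components to vanish. The tool is part~(3) of the lemma on powers of $F$: $F^p$ (which is the $p$-th power, hence $F^{p^1}$) has $(n-1)$-st component $x_{n-1} + M_{n-1}(f_{n-1})$ where $M_{n-1} = \sum_{j=0}^{p-1} F^j$; more generally after noting $F^p \in \BA_{n-1}(R[x_n])$ fixes $x_n$, iterating shows the relevant obstruction to $F^{p^m} = I$ is captured by the operators $M_i$. So $F^p = I$ gives $M_{n-1}(f_{n-1}') = 0$. Now over $R[x_n]$, applied to $(x_n+f_n)$ with $f_n \in R^*$, Corollary~\ref{Ring} tells us $\ker(M) = \im(N)$ on $R[x_n]$; since $f_{n-1}'$ was chosen in a transversal of $\im(N)$ meeting $\ker(N)$ trivially in the appropriate sense, $M(f_{n-1}') = 0$ forces $f_{n-1}' = 0$. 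With $f_{n-1}' = 0$ we are genuinely in dimension $n-1$ (the map is $(x_1+f_1',\dots,x_{n-2}+f_{n-2}',x_{n-1},x_n+f_n)$, i.e. $\vec F$ without a real $(n-1)$-component), and we apply the induction hypothesis over the ring $R[x_{n-1}]$ — wait, more carefully: we apply it to the $n-1$-dimensional strictly triangular map $(x_1+f_1',\dots,x_{n-2}+f_{n-2}',x_n+f_n)$ over $R$ in the variables $x_1,\dots,x_{n-2},x_n$, whose $p$-th power is still $I$ and whose last component still has unit coefficient, concluding by induction that it is conjugate by an element of $\BA_{n-2}(R[x_n])$ to $(x_n+f_n)$. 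Composing all the conjugating maps (all of which lie in $\BA_{n-1}(R[x_n])$, a group) yields the desired $\tau$ with $\tau^{-1}F\tau = (x_n+f_n)$.
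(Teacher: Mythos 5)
Your proposal is correct and follows essentially the same route as the paper: clear the components one index at a time by conjugating with elementary maps, using Corollary~\ref{Ring} (namely $\ker(M)=\im(N)$ in one variable over a domain) together with the observation that $F^p=I$ forces $M(f_{n-1})=0$, and then recurse to lower indices. The only points to tighten are cosmetic: in the final inductive step the map $(x_1+f_1',\ldots,x_{n-2}+f_{n-2}',x_n+f_n)$ must be viewed over the base domain $R[x_{n-1}]$ (your first instinct), not over $R$, since the $f_i'$ may still involve $x_{n-1}$ --- exactly as the paper does, working over $R[x_{k+1},\ldots,x_{n-1}]$ at each stage --- and your preliminary reduction of all $f_i$ ($i<n-1$) to transversals of $\im(N|_{R[x_i,\ldots,x_n]})$ should simply be dropped, since no description of those images is available at this point and your induction never actually uses it (only the $i=n-1$ case, covered by Corollary~\ref{Ring}, is needed, and even there it suffices to note $f_{n-1}\in\ker(M)=\im(N)$ directly, with no transversal detour).
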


\begin{proof}
The ``if'' side is trivial. So let us assume $F^p=I$ and show we can conjugate $F$ to the given form.
Let us assume we can conjugate $F$ to $(x_1+f_1,\ldots, x_k+f_k, x_{k+1},\ldots, x_{n-1}, x_n+f_n)$.
We will consider this as a map $F'=(x_1+f_1,\ldots, x_k+f_k, x_n+f_n)$ on $(R[x_{k+1},\ldots, x_{n-1}])[x_1,\ldots,x_k,x_n]$. 
We get  $I=F^p=(\ldots, x_k+M(f_k), x_n+f_n)$, and thus apparently $M(f_k)=0$. Thus, 
$f_k\in \ker(M)$ which equals (using corollary \ref{Ring}) $\im(N)$. Let $g_k$ be such that $N(g_k)=f_k$. 
Then $(x_k-g_k)F (x_k+g_k)=(\ldots, x_k, x_{k+1},\ldots, x_{n-1}, x_n+f_n)$. Continuing this process, the lemma is proven. 
\end{proof}

\begin{lemma}
Assume  $T_{\ref{T1}}[n-1]$ and $T_{\ref{T2}}[n-1]$ . Then $T_{\ref{T1}}[n]$ holds.
\end{lemma}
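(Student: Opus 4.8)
The plan is to peel off the first variable and run the induction on the number of variables. Set $B=k[x_2,\dots,x_n]$ and let $\vec{F}_2=(x_2+f_2,\dots,x_n+f_n)\in\BA_{n-1}(k)$ be the restriction of $F$ to $B$. First I record that $\ord(\vec F_2)=p^{n-1}$. Since $\vec F_2^{\,p^{n-1}}=I$, the power $F^{p^{n-1}}$ fixes $x_2,\dots,x_n$ and acts by $x_1\mapsto x_1+a_1$ with $a_1=\sum_{i=0}^{p^{n-1}-1}\vec F_2^{\,i}(f_1)$, and the hypothesis $\ord(F)=p^n$ forces $a_1\neq 0$; were $\ord(\vec F_2)=p^s$ with $s\le n-2$, grouping the sum in $p^{n-1-s}$ blocks of length $p^s$ would give $a_1=p^{\,n-1-s}\bigl(\sum_{i=0}^{p^s-1}\vec F_2^{\,i}(f_1)\bigr)=0$, a contradiction. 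Consequently $M:=\sum_{i=0}^{p^{n-1}-1}\vec F_2^{\,i}$ is precisely the operator occurring in $T_{\ref{T2}}[n-1]$ for $\vec F_2$, and $a_1=M(f_1)$ is a nonzero element of $C:=\inv(\vec F_2)$ (it is fixed by $\vec F_2$ since $\vec F_2\,M=M$). By $T_{\ref{T1}}[n-1]$ we have $C=k[\tilde x_2,\dots,\tilde x_n]$ with the $\tilde x_i$ of the asserted shape, and $F$ acts on $k^{[n]}=B[x_1]$ by $x_1\mapsto x_1+f_1$ and as $\vec F_2$ on $B$.

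Next I construct the remaining generator $\tilde x_1=x_1^p-a_1^{p-1}x_1+b_1$ with $b_1\in B$. Using $F(x_1^p)=x_1^p+f_1^p$, one checks that $\tilde x_1\in\inv(F)$ is equivalent to $a_1\in C$ together with $N_2(b_1)=a_1^{p-1}f_1-f_1^p$, where $N_2=\vec F_2-I$. Now $T_{\ref{T2}}[n-1]$ gives $\im(N_2)=\ker(M)$, so it suffices to show $M\bigl(a_1^{p-1}f_1-f_1^p\bigr)=0$. Since $a_1\in C$ one has $M(a_1^{p-1}f_1)=a_1^{p-1}M(f_1)=a_1^{p}$; and since each $\vec F_2^{\,i}$ is a ring homomorphism and $\kar k=p$, Frobenius gives $M(f_1^p)=M(f_1)^p=a_1^p$. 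Hence the difference vanishes, $b_1$ exists, and $\tilde x_1\in\inv(F)$; moreover $a_1,b_1\in k[x_2,\dots,x_n]$, so $\tilde x_1$ has the required form. This identification $a_1=M(f_1)$, combined with the exactness $\im(N_2)=\ker(M)$ from $T_{\ref{T2}}[n-1]$ and the Frobenius identity, is the step I expect to be the crux of the argument.

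Finally I show $\inv(F)=k[\tilde x_1,\dots,\tilde x_n]$. The generators are algebraically independent because $\tilde x_2,\dots,\tilde x_n$ are (by $T_{\ref{T1}}[n-1]$) and $\tilde x_1$ genuinely involves $x_1$, which does not occur in $B$. For the reverse inclusion take $g\in\inv(F)$, $g\neq 0$, and induct on $d:=\deg_{x_1}(g)$. If $d=0$ then $g\in\inv(F)\cap B=C=k[\tilde x_2,\dots,\tilde x_n]$. If $d\ge 1$, write $g=c_dx_1^d+\cdots$ with $c_j\in B$ and $c_d\neq 0$: comparing the top $x_1$-coefficients in $F(g)=g$ gives $\vec F_2(c_d)=c_d$, i.e.\ $c_d\in C$, and comparing the coefficients of $x_1^{d-1}$ gives $N_2(c_{d-1})=-d\,f_1c_d$. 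Thus $-d\,f_1c_d\in\im(N_2)=\ker(M)$, so $0=M(-d\,f_1c_d)=-d\,c_d\,M(f_1)=-d\,c_d\,a_1$ in the domain $B$; since $c_d\neq 0$ and $a_1\neq 0$ this forces $p\mid d$. Then $g-c_d\tilde x_1^{\,d/p}\in\inv(F)$ has $x_1$-degree strictly less than $d$, so by induction it lies in $k[\tilde x_1,\dots,\tilde x_n]$, and hence so does $g$. This proves $T_{\ref{T1}}[n]$.
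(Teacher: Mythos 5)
Your proof is correct, and while it builds the same key element as the paper, the route to generation is genuinely different. Like the paper, you identify the nonzero invariant $a_1=M_2(f_1)$ (the increment of $x_1$ in $F^{p^{n-1}}$) and produce $\tilde x_1=x_1^p-a_1^{p-1}x_1+b_1$ by solving $N_2(b_1)=a_1^{p-1}f_1-f_1^p$ via the exactness $\im(N_2)=\ker(M_2)$ from $T_{\ref{T2}}[n-1]$; your $b_1$ is exactly the paper's conjugating element $h$. The differences: the paper deduces that $f_1^p-a_1^{p-1}f_1\in\ker(M_2)$ from the identity $F|_A^{p^{n-1}}=I$ after restricting $F$ to $A:=\inv(F^{p^{n-1}})$, which it identifies as the polynomial ring $k[x_1',x_2,\ldots,x_n]$, $x_1'=x_1^p-a_1^{p-1}x_1$, using the one-variable result over the ring $k[x_2,\ldots,x_n]$ (Corollary \ref{Ring}); it then conjugates $F|_A$ by $(x_1'+h)$ to $(x_1',\tilde F)$ and invokes $T_{\ref{T1}}[n-1]$, so the inclusion $\inv(F)\subseteq A$ does the work of bounding the invariants. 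You instead verify $M_2(a_1^{p-1}f_1-f_1^p)=0$ by a direct Frobenius computation ($M_2(f_1^p)=M_2(f_1)^p$) and prove generation by an elementary induction on $\deg_{x_1}$: the leading-coefficient comparison gives $c_d\in\inv(\vec F_2)$, the next coefficient gives $N_2(c_{d-1})=-d f_1 c_d$, and applying $M_2$ forces $p\mid d$, so you can subtract $c_d\tilde x_1^{d/p}$. This avoids Corollary \ref{Ring} and the conjugation step entirely, at the cost of the explicit coefficient bookkeeping; it also has the merit of proving, rather than merely asserting as the paper does, that $\ord(\vec F_2)=p^{n-1}$ and $a_1\neq 0$ (your block-summation argument in characteristic $p$). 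Both arguments use the induction hypotheses in the same places, so the logical scheme of the paper is respected.
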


\begin{proof}
We are thus considering  $F=(x_1+f_1,\ldots, x_n+f_n)\in \BA_n(k)$ where $f_n\in k^*$ and $\ord(F)=p^n$.
We want to prove $\inv(F)=k[\tilde{x}_1,\ldots, \tilde{x}_n]$ where 
$\tilde{x}_i=x_i^p+a_i^{p-1}x_i+b_i$ where $a_i,b_i\in k[x_{i+1},\ldots, x_n]$. 
The induction assumption $T_{\ref{T2}}[n-1]$ is used in the form of $\im(M_2)=\ker(N_2)$ and $\im(N_2)=\ker(M_2)$. 

Consider $F^{p^{n-1}}=(x_1+g_1,x_2,\ldots,x_n)$. Note that 
$0\not =g_1=M_2(f_1)\in \im(M_2)\subseteq \ker(N_2)\subseteq \ker(N)=\inv(F)$ (no appeal to assumptions here!) and thus $F(g_1)=g_1$. 
Now $\inv(F^{p^{n-1}})=k[x_1',x_2,\ldots,x_n]$ where $x_1'=x_1^p-g_1^{p-1}x_1$. 
Note that $\inv(F)\subseteq \inv(F^{p^{n-1}})$. We will restrict $F$ to $A:=\inv(F^{p^{n-1}})$
and compute $A^F=\inv(F)$. 
Now 
\[ F(x_1')=(x_1+f_1)^p-F(g_1)^{p-1}(x_1+f_1)=x_1'+f_1^p-g_1^{p-1}f_1.\]
Thus, $F|_A$ is triangular: $(x_1',x_2,\ldots, x_n)=(x_1'+g,x_2+f_2,\ldots, x_n+f_n)$
where $g=f_1^p-g_1^{p-1}f_1$. 

$F|_A$ has order $p^{n-1}$: it must be at least $p^{n-1}$ since $F|_A$ restricted to $k[x_2,\ldots,x_n]$ has order $p^{n-1}$.
If $h\in A$, then $F^{p^{n-1}}(h)=h$ by definition - so $F|_A$ is at most $p^{n-1}$. 

Now $F|_A=(x_1'+g, \tilde{F})$ where $\tilde{F}=F|_{k[x_2,\ldots, x_n]}$. 
$I|_A=F|_A^{p^{n-1}}=(x_1'+M_2(g), x_2,\ldots, x_n)$ and thus apparently $M_2(g)=0$, i.e. $g\in \ker(M_2)=\im(N_2)=k[x_2,\ldots,x_n]^{\tilde{F}}$ (here we have used $T_{\ref{T1}}[n-1]$). 
This means that we can find $h\in k[x_2,\ldots, x_n]$ such that $(x'_1+h)F|_A(x_1'-h)=(x_1', \tilde{F})$. 
Thus, $\inv(F|_A)=(x_1+h)\inv(x_1', \tilde{F})$.
Now $\inv(\tilde{F})=k[\tilde{x}_2,\ldots, \tilde{x}_n]$ as provided by induction. 
So $\inv(F|_A)=(x_1'+h)k[x_1',\tilde{x}_2,\ldots, \tilde{x}_n]=
k[x_1'+h, \tilde{x}_2,\ldots, \tilde{x}_n]$ and thus if we define $\tilde{x}_1=x_1'+h=x_1^p-g_1^{p-1}x_1+h$ we are done. 
\end{proof}

\begin{remark} In the proof above, we thus see that the $a_i$ in theorem \ref{T1} satisfy
$a_i=M_i(f_i)$. 
\end{remark}

\subsection{Induction step $T_{\ref{T2}}[n]$ from $T_{\ref{T2}}[n-1]$ and $T_{\ref{T1}}[n]$}

\begin{definition} \label{defVd} Assume $T_{\ref{T1}}[n]$. 
 Given $F=(x_1+f_1,\ldots, x_n+f_n)$, define (inductively) $\deg_F$ on $k^{[n]}$ by \\
(1) $\deg_F(x_n)=1$,\\
For $i=2$ to $n$ choose $\deg_F(x_i)$ large enough such that such that \\
(2a) $\deg_F(x_i)\geq \deg_F(f_i)$,\\
(2b) $\deg_F(x_i)\geq\deg_F( a_i), \deg_F(b_i)$ from theorem $T_{\ref{T1}}[n]$,  \\
(2c) $\deg_F(x_i)\in p\Z$.\\
Write $k^{[n]}_d=\{g\in k^{[n]} ~|~\deg_F(g)\leq d\}$. \\
Define $V_d:=k[x_n]_{p-1} k^{[n]}_{d} $. \\
Define ${W}_d:={V}_d\cap (k^{[n]})^F=V_d^F$.\\
\end{definition}

In the above definition, we have some choice in $\deg_F(x_i)$, but we can make it unique in stating that $\deg_F(x_i)$ should be as low as possible within the constraints (though we don't really care). 
The requirement (2a) is picked such that $F(k^{[n]}_d)\subseteq k^{[n]}_d$. 
The requirement (2b) is picked so  $\deg_F(\tilde{x}_i)=\deg_F(x_i^p)$, in order to be able to predict the degree of $\tilde{x}_i$. 
The requirement (2c) is added so that $\deg_F(x_n)$ is the only variable having degree coprime to $p$. 

Note that since $F(k^{[n]}_d)\subseteq k^{[n]}_d$,  we have a finite dimensional filtration of $k^{[n]}$ preserved by $F$ (see 
lemma \ref{SM1}).
Note that also $F(V_d)\subseteq V_d$, as $F(x_n^ig)=(x_n+f_n)^iF(g)\in V_d$ for every $g\in (k^{[n]})_d$ etc. so $V_d$ gives another such filtration of $k^{[n]}$.

\begin{lemma} \label{Wdeg} Assuming $T_{\ref{T1}}[n]$, we have $W_{pd}=(k^{[n]}_{pd})^F$.
\end{lemma}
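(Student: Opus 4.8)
The plan is to prove the two inclusions separately. The inclusion $(k^{[n]}_{pd})^F\subseteq W_{pd}$ is immediate: since $1\in k[x_n]_{p-1}$ we have $k^{[n]}_{pd}\subseteq V_{pd}$, hence $(k^{[n]}_{pd})^F=k^{[n]}_{pd}\cap(k^{[n]})^F\subseteq V_{pd}\cap(k^{[n]})^F=W_{pd}$. For the reverse inclusion I would first record that $V_{pd}\subseteq k^{[n]}_{pd+p-1}$: an element of $V_{pd}$ is a finite sum of products $uv$ with $u\in k[x_n]_{p-1}$ and $v\in k^{[n]}_{pd}$, and $\deg_F(uv)=\deg_F(u)+\deg_F(v)\le(p-1)+pd$ since $\deg_F(x_n)=1$. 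In particular every $h\in W_{pd}$ satisfies $\deg_F(h)\le pd+p-1$.

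The heart of the matter is the claim that every $F$-invariant $h$ has $\deg_F(h)\in p\Z$. Granting this, an invariant $h\in W_{pd}$ has $\deg_F(h)\le pd+p-1$ and $\deg_F(h)\in p\Z$, which forces $\deg_F(h)\le pd$, so $h\in k^{[n]}_{pd}$ and hence $h\in(k^{[n]}_{pd})^F$; this gives $W_{pd}\subseteq (k^{[n]}_{pd})^F$ and finishes the proof. To establish the claim I would invoke $T_{\ref{T1}}[n]$ to write $h$ as a polynomial in $\tilde{x}_1,\ldots,\tilde{x}_n$, where $\tilde{x}_i=x_i^p-a_i^{p-1}x_i+b_i$ with $a_i,b_i\in k[x_{i+1},\ldots,x_n]$. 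Writing $w_i=\deg_F(x_i)$, condition (2b) of Definition \ref{defVd} gives $\deg_F(a_i^{p-1}x_i)=(p-1)\deg_F(a_i)+w_i\le pw_i$ and $\deg_F(b_i)\le w_i<pw_i$, whereas $\deg_F(x_i^p)=pw_i$; since $x_i^p$ is the only monomial occurring in $\tilde{x}_i$ in which $x_i$ appears with exponent greater than $1$, it cannot be cancelled, so $\deg_F(\tilde{x}_i)=pw_i$. Refining $\deg_F$ to a genuine monomial order (compare $\deg_F$-weight first, break ties by lex with $x_1>\cdots>x_n$), the same observation shows that the leading monomial of $\tilde{x}_i$ is $x_i^p$; hence the leading monomial of $\tilde{x}^{\alpha}$ is $x^{p\alpha}$, and these are pairwise distinct as the exponent vector $\alpha$ varies. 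Therefore in $h=\sum_{\alpha}c_{\alpha}\tilde{x}^{\alpha}$ none of these leading monomials can cancel, so $\deg_F(h)=\deg_F(\tilde{x}^{\alpha^{*}})=p\sum_i\alpha^{*}_iw_i$ for the maximal $\alpha^{*}$, and this lies in $p\Z$.

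The main obstacle is exactly this $\deg_F(h)\in p\Z$ claim. Theorem $T_{\ref{T1}}[n]$ only tells us that $h$ is a polynomial in the $\tilde{x}_i$, and a priori cancellations among the monomials $\tilde{x}^{\alpha}$ could leave an invariant whose $\deg_F$-weight is not a multiple of $p$; everything hinges on the fact, guaranteed by the choice of $\deg_F$ in Definition \ref{defVd}(2b), that each $\tilde{x}_i$ has $x_i^p$ as its weight-leading monomial, so that distinct monomials in the $\tilde{x}_i$ have distinct leading monomials and no such cancellation is possible. The remaining ingredients — that $\deg_F$ is multiplicative on the domain $k^{[n]}$, that the filtration $\{V_d\}$ is $F$-stable, and the bookkeeping with $W_d=V_d^F$ — are routine.
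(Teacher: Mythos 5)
Correct, and essentially the paper's own argument: the paper deduces the lemma from exactly the same two facts, namely $V_{pd}\subseteq k^{[n]}_{pd+p-1}$ and that every $F$-invariant has $\deg_F$-degree in $p\Z$ (the latter asserted directly from $T_{\ref{T1}}[n]$ and the design constraints (2b), (2c) of Definition \ref{defVd}), so your write-up mainly supplies the non-cancellation detail the paper leaves implicit. One small wording caveat: it is not true in general that \emph{none} of the leading monomials $x^{p\alpha}$ can cancel (a non-maximal one may be killed by a non-leading term of another $\tilde{x}^{\beta}$); what your argument actually uses, and what suffices, is that the maximal leading monomial $x^{p\alpha^{*}}$ survives, so the proof stands as stated.
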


\begin{proof}
If $(k^{[n]})^F=k[\tilde{x}_1,\ldots, \tilde{x}_n]$, then we see that  any element invariant under $F$ has degree a multiple of $p$, i.e. 
$W_{pa+b}\subseteq k^{[n]}_{pa}$ for each $a,b\in \N, 0\leq b\leq p-1$.
Since $V_{pd}\subseteq k^{[n]}_{pd+p-1}$ we get $W_{pd}=V_{pd}\cap (k^{[n]})^F\subseteq (k^{[n]}_{pd})$ and the result follows.
\end{proof}

\begin{lemma} 
Assume $T_{\ref{T1}}[n]$ and $T_{\ref{T2}}[n-1]$. Then $T_{\ref{T2}}[n]$ is true. So, 
let $F=(x_1+f_1,\ldots, x_n+f_n)$ such that $\ord(F)=p^n$. 
Then the sequence
\[ 0\lp (k^{[n]})^F \lp k^{[n]} \overset{N}{\lp} k^{[n]} \overset{M}{\lp}  (k^{[n]})^F \lp 0\]
is exact. 
Furthermore, $\im(M)=\ker(N)$. 
\end{lemma}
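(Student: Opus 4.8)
The plan is to peel the statement down to a single surjectivity claim on finite-dimensional pieces. Lemma \ref{SM3}, applied with $V=k^{[n]}$, $L=F$, $m=n$, already gives a good deal for free: the displayed five-term sequence is a complex, it is exact at the first $(k^{[n]})^F$ and at the first $k^{[n]}$ (the latter because $\ker N=\ker(F-I)=(k^{[n]})^F$), $\im(M)$ is an ideal of $(k^{[n]})^F$, and the only potentially non-zero homology sits at the second $k^{[n]}$, namely $\ker(M)/\im(N)$. So everything reduces to two assertions: that $M$ is onto $(k^{[n]})^F$ (equivalently $\im(M)=\ker(N)$, since $\ker N=(k^{[n]})^F$) and that $\ker(M)=\im(N)$.

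Next I would use the filtration of Definition \ref{defVd}. Each $V_d$ is finite-dimensional and $F$-stable, hence stable under $N$ and $M$, with $\bigcup_d V_d=k^{[n]}$ and $M(V_d)\subseteq V_d\cap(k^{[n]})^F=W_d$. Applying Lemma \ref{SM3} to $L=F|_{V_d}$ gives a complex $0\lp W_d\lp V_d\overset{N}{\lp}V_d\overset{M}{\lp}W_d\lp W_d/\im(M|_{V_d})\lp 0$ whose only non-zero homology is $\ker(M|_{V_d})/\im(N|_{V_d})$; since $V_d$ is finite-dimensional, comparing Euler characteristics forces $\dim\big(W_d/\im(M|_{V_d})\big)=\dim\big(\ker(M|_{V_d})/\im(N|_{V_d})\big)$. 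So as soon as $M$ maps $V_d$ onto $W_d$ for every $d$, the right-hand side vanishes, giving $\ker(M|_{V_d})=\im(N|_{V_d})$ for every $d$; taking unions over $d$ (using Lemma \ref{Wdeg} to get $\bigcup_d W_d=(k^{[n]})^F$) then yields $\im(M)=(k^{[n]})^F$ and $\ker(M)=\im(N)$ simultaneously. The whole lemma is therefore reduced to showing $M(V_d)=W_d$ for all $d$.

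For this surjectivity I would lean on $T_{\ref{T1}}[n]$, which describes $(k^{[n]})^F=k[\tilde x_1,\ldots,\tilde x_n]$ with $\tilde x_i=x_i^p-a_i^{p-1}x_i+b_i$ and $a_i,b_i\in k[x_{i+1},\ldots,x_n]$; then $W_d$ has a $k$-basis of monomials $\tilde x_1^{e_1}\cdots\tilde x_n^{e_n}$ of $\deg_F\le d$, and it suffices to put each such monomial in the image of $M$. The model is Lemma \ref{exact1}, where $M(x_1^{p-1}\tilde x_1^{m})$ is a nonzero scalar times $\tilde x_1^m$ up to terms of smaller degree, the scalar being $-f_1^{p-1}\in k^*$. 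To carry this into dimension $n$ I would factor $M=\big(\sum_{j=0}^{p-1}(F^{p^{n-1}})^j\big)\circ M_2$, where $M_2=\sum_{i=0}^{p^{n-1}-1}F^i$ is the ``$M$'' of the restriction $\tilde F=F|_{k[x_2,\ldots,x_n]}$, to which $T_{\ref{T2}}[n-1]$ applies (so $\im M_2=\ker N_2$ on $k[x_2,\ldots,x_n]$), and where $F^{p^{n-1}}=(x_1+g_1,x_2,\ldots,x_n)$ with $g_1=M_2(f_1)\in(k^{[n]})^F$, $g_1\ne0$. The first factor is then the one-variable operator for $(x_1+g_1)$ over the ring $A=\inv(F^{p^{n-1}})=k[x_1',x_2,\ldots,x_n]$ (with $x_1'=x_1^p-g_1^{p-1}x_1$), it is $A$-linear, and it is governed by Corollary \ref{Ring}; combining this with the control of the second factor coming from $T_{\ref{T2}}[n-1]$ (plus an induction on $n$) should produce the required preimages.

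The main obstacle is exactly this surjectivity of $M$, i.e. $\im(M)=(k^{[n]})^F$; since $\im(M)$ is an ideal of the polynomial ring $k[\tilde x_1,\ldots,\tilde x_n]$, it amounts to $1\in\im(M)$. Via the factorization above this comes down to $\big(\sum_{j=0}^{p-1}(F^{p^{n-1}})^j\big)(x_1^{p-1})=-g_1^{p-1}$ being a \emph{unit} of $A$, i.e. of $k$ — in other words one must show that the hypothesis $\ord(F)=p^n$ forces $g_1\in k^*$, and inductively that the analogous quantities appearing at each stage of the induction are units rather than merely nonzero elements of $k[x_{i+1},\ldots,x_n]$. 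Pinning down this essential use of the maximal-order hypothesis is the delicate point of the argument; once it is done, exactness in the middle ($\ker M=\im N$) follows for free from the Euler-characteristic comparison of the second paragraph.
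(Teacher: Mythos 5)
Your completed steps coincide with the paper's own strategy: the reduction via Lemma \ref{SM3}, and the passage to the finite-dimensional, $F$-stable pieces $V_d$, $W_{pd}$ of Definition \ref{defVd} with a dimension count and a union over $d$ (using Lemma \ref{Wdeg}) to get $\ker(M)=\im(N)$, are exactly what the paper does. But the one statement on which everything hinges --- surjectivity of $M$ onto $(k^{[n]})^F$, equivalently $1\in\im(M)$ since $\im(M)$ is an ideal of $(k^{[n]})^F$ --- you explicitly leave open, so as it stands your proposal is a reduction, not a proof. (For comparison, the paper disposes of this point in one line, asserting $M(x_n^{p-1})=-f_n^{p-1}\in k^*$ by quoting the one-variable computation of Lemma \ref{exact1}.)

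Moreover, the route you sketch for closing the gap cannot work, because your intermediate goal is false: $\ord(F)=p^n$ does not force $g_1=M_2(f_1)\in k^*$. Take $p=2$ and $F=(x_1+x_2^3,\,x_2+1)$: then $F^2=(x_1+x_2^2+x_2+1,\,x_2)\neq I$ and $F^4=I$, so $\ord(F)=p^2$ is maximal, yet $g_1=x_2^2+x_2+1\notin k^*$. Worse, in your own factorization $M=\bigl(\sum_{j=0}^{p-1}(F^{p^{n-1}})^j\bigr)\circ M_2$ the left factor sends any $u$ to $\sum_{j=0}^{p-1}u(x_1+jg_1,x_2,\ldots,x_n)\equiv p\,u\equiv 0\pmod{g_1}$, so $\im(M)\subseteq g_1k^{[n]}$; in the example this gives $1\notin\im(M)$, i.e.\ the surjectivity you need genuinely fails for a map of maximal order. (This also shows the paper's one-line computation is special to $n=1$: for $n\ge 2$ each residue of $j$ modulo $p$ occurs $p^{n-1}$ times in $\sum_{j=0}^{p^n-1}(x_n+jf_n)^{p-1}$, so $M(x_n^{p-1})=p^{n-1}\cdot(-f_n^{p-1})=0$, not $-f_n^{p-1}$.) So your instinct about where the difficulty lies is correct, but neither your argument nor the cited one settles it: exactness at the last spot and $\im(M)=\ker(N)$ require either extra hypotheses or a reformulation, while the unconditional part of your write-up --- $\ker(N)=(k^{[n]})^F$, the complex property, and the implication ``if $M(V_{pd})=W_{pd}$ for all $d$ then $\ker(M)=\im(N)$'' --- does stand.
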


\begin{proof}
Notice first that $f_n\in k^*$ since otherwise $\ord(F)\leq p^{n-1}$. 

Using lemma \ref{SM3} we see that the only things to prove are  (1) $\ker(N)=(k^{[n]})^F$, (2)  $\im(M)=(k^{[n]})^F $,
(3) $\im(N)=\ker(M)$ and (4) $\im(M)=\ker(N)$. \\
(1) $\ker(N)=\{g\in k^{[n]} ~|~F(g)-g=0\}=(k^{[n]})^F$.\\
(2) $M(x_n^{p-1})=-f_n^{p-1}\in k^*$  (see part (2) of the proof of \ref{exact1} for a detailed computation), so the ideal $\im(M)=(k^{[n]})^F $. \\
(4) follows from (1) and (2).\\
(3) It is now tempting to state that since $(k^{[n]})^F =\ker(N)=\im(M)$, then $\im(N)=\ker(M)$, but since the $k$-vector spaces are infinite dimensional, this argument does not hold. However, what we will do, is restrict to finite dimensional subspaces $V_d$ and $W_d:=V_d\cap (k^{[n]})^F $ for which $\cup_d V_d=k^{[n]}$, {\em and} for which the restricted sequence
\[ 0\lp W_d\lp V_d  \overset{N}{\lp} V_d \overset{M}{\lp} W_d \lp 0 \]
is (a) well-defined, (b) exact. Note that there DO exist linear subspaces for which (a) holds but (b) not, so we need to define 
$V_d$ carefully - we claim that the definition in \ref{defVd} works for well-chosen $d$.
For this, note that $M(x_n^{p-1})\in k*$, and that if $f\in (k^{n]})^F$ and $g\in k^{[n]}$, then $M(fg)=fM(g)$. 
This means that $M(V_d)\supseteq M(x_n^{p-1} (k^{[n]}_d)^F)=(k^{[n]}_d)^F$.
Thus, using lemma \ref{Wdeg}
\[ W_{pd}=V_{pd}\cap (k^{[n]})^F\supseteq M(V_{pd}) \supseteq (k^{[n]}_{pd})^F=W_{pd}\] 
Thus, $\im(M|_{V_{pd}})=W_{pd}=\ker(N|_{V_{pd}})$ and the sequence
\[ 0\lp W_{pd}\lp V_{pd}\overset{N}{\lp} V_{pd} \overset{M}{\lp} W_{pd} \lp 0 \]
is exact. So (3) holds (and (1),(2), (3) yield that the sequence stated in the lemma is exact). 
\end{proof}

\subsection{Conjugacy classes: maximal order case in $\BA_n(k)$}

When we're conjugating $F$ by $(x_i+g_i)$, then the $i$-th component changes by $g_i-F(g_i)=N(g_i)$, which 
means we need to understand $\im(N)$ - or, better said, we want to have representants in $k^{[n]}$ of $k^{[n]}/\im(N)$.
For this, we need the following lemma, which builds on on theorems \ref{T1} and \ref{T2}.

\begin{lemma} \label{split}
Let $F=(x_1+f_1,\ldots, x_n+f_n)$ such that $\ord(F)=p^n$. Then
\[ k^{[n]}=x_n^{p-1}\inv(F)\oplus \im(N).\]
\end{lemma}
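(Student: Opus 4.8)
The plan is to show that the two subspaces $x_n^{p-1}\inv(F)$ and $\im(N)$ are independent and together span $k^{[n]}$, working inside the finite-dimensional filtration pieces $V_d$ from Definition~\ref{defVd} and then passing to the union, exactly as in part (5) of the proof of Lemma~\ref{exact1} and in the proof of $T_{\ref{T2}}[n]$. First I would record the two easy facts. For independence: if $g\in\inv(F)$ and $x_n^{p-1}g\in\im(N)$, I would apply $M$; since $M$ kills $\im(N)$ (because $NM=0$ and more precisely $\ker M=\im N$ by Theorem~\ref{T2}) we get $M(x_n^{p-1}g)=0$, but $M(x_n^{p-1}g)=M(x_n^{p-1})g$ using that $g$ is $F$-invariant and $M$ is a module map over $\inv(F)$, and $M(x_n^{p-1})=-f_n^{p-1}\in k^*$ by the computation in part~(2) of Lemma~\ref{exact1}; hence $g=0$. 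So the sum is direct.

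For the spanning statement, the key is again that $M$ restricted to $V_{pd}$ is surjective onto $W_{pd}=(k^{[n]}_{pd})^F$ with $\im(M|_{V_{pd}})=\ker(N|_{V_{pd}})$, as established in the proof of $T_{\ref{T2}}[n]$. I would take an arbitrary $g\in k^{[n]}$, choose $d$ large (of the form needed in Lemma~\ref{Wdeg}, i.e. $\deg_F(g)\le pd$) so that $g\in V_{pd}$, and set $h:=\tfrac{-1}{f_n^{p-1}}\,M(g)\in (k^{[n]}_{pd})^F=\inv(F)$. Then $g':=g-x_n^{p-1}h$ satisfies $M(g')=M(g)-M(x_n^{p-1})h=M(g)+f_n^{p-1}h=0$ by the choice of $h$ (again using that $h$ is $F$-invariant so $M(x_n^{p-1}h)=M(x_n^{p-1})h$). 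Hence $g'\in\ker(M)=\im(N)$ by Theorem~\ref{T2}, and $g=x_n^{p-1}h+g'$ exhibits the desired decomposition. Taking the union over all suitable $d$ gives $k^{[n]}=x_n^{p-1}\inv(F)+\im(N)$, which combined with directness finishes the proof.

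The one point that needs a little care — and the place I would expect to spend the most attention — is making sure the filtration bookkeeping is airtight: that $\deg_F$ assigns $x_n$ the unique degree coprime to $p$ (requirement (2c)), so that a general $F$-invariant element genuinely lies in $k^{[n]}_{pd}$ for some $d$ (Lemma~\ref{Wdeg}), and that $V_{pd}=k[x_n]_{p-1}k^{[n]}_{pd}$ is $F$-stable and large enough for $M(V_{pd})\supseteq M(x_n^{p-1}(k^{[n]}_{pd})^F)=(k^{[n]}_{pd})^F$ to hold. All of these are already proved or noted in Definition~\ref{defVd}, Lemma~\ref{Wdeg}, and the proof of $T_{\ref{T2}}[n]$, so the remaining argument is just assembling them; no genuinely new obstacle arises, only the need to cite the correct filtration level. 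Alternatively, one can phrase the whole thing module-theoretically: $\inv(F)$ is a ring, $M:k^{[n]}\to\inv(F)$ is $\inv(F)$-linear with a section $g\mapsto x_n^{p-1}g$ after rescaling by the unit $-f_n^{p-1}$, so $k^{[n]}=\ker(M)\oplus x_n^{p-1}\inv(F)$ as $\inv(F)$-modules and $\ker(M)=\im(N)$ by Theorem~\ref{T2}; but the finite-dimensional reduction is still needed to justify $\ker M=\im N$, so this is really the same proof.
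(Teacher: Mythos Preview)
Your proposal is correct and is essentially the paper's own proof: the paper simply cites Theorem~\ref{T2} for $\im(N)=\ker(M)$ and the surjectivity of $M$ onto $\inv(F)$, then exhibits the section $s(f)=-f_n^{1-p}x_n^{p-1}f$ (exactly your map $h\mapsto x_n^{p-1}h$ up to the scalar) to split the short exact sequence $0\to\ker(M)\to k^{[n]}\to\inv(F)\to 0$. Your filtration discussion is unnecessary overhead here---the finite-dimensional reduction was already absorbed into the proof of Theorem~\ref{T2}, so once that theorem is available you can quote $\ker(M)=\im(N)$ directly, as the paper does, and the argument collapses to the three-line ``alternative'' you sketch at the end.
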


\begin{proof} Using theorem \ref{T2} we see that $\im(N)=\ker(M)$. We have the surjective map 
$k^{[n]}\overset{M}{\lp} \inv(F)$. We provide a section $s:\inv(F)\lp k^{[n]}$ by $s(f)=-f_n^{1-p}x_n^{p-1}f$. 
Indeed, since $M(x_n^{p-1})=-f_n^{p-1}$, we get that 
$M(-f_1^{1-p}x_n^{p-1}f)=f$ (see the argument at the end of the proof of \ref{SM3} for detailed reasoning). 
Thus, $Ms(f)=f$. This means that we can make a split exact sequence
\[ 0\lp \ker(M)\lp k^{[n]}\lp \im(M)\lp 0\]
and $k^{[n]}=\ker(M)\oplus s(\im(M))=\im(N)\oplus x_n^{p-1}\inv(F)$. 
\end{proof}

\begin{corollary}  (of lemma \ref{split}) \label{split2}
Let $F=(x_1+f_1,\ldots, x_n+f_n)$ such that $\ord(F)=p^n$. Then
\[ k^{[n]}=x_n^{p-1}k[x_1^p,\ldots,x_n^p]\oplus \im(N).\]
\end{corollary}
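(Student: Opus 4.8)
The plan is to deduce Corollary~\ref{split2} from Lemma~\ref{split}, which gives $k^{[n]}=x_n^{p-1}\inv(F)\oplus\im(N)$: it suffices to show that $x_n^{p-1}k[x_1^p,\ldots,x_n^p]$ is also a vector-space complement of $\im(N)$ in $k^{[n]}$. By Theorem~\ref{T2}, $\im(N)=\ker(M)$ and $M$ induces an isomorphism $k^{[n]}/\im(N)\xrightarrow{\ \sim\ }\im(M)=\inv(F)$, while the section used in the proof of Lemma~\ref{split} gives $M(x_n^{p-1}h)=-f_n^{p-1}h$ for all $h\in\inv(F)$. Hence the assertion is equivalent to: the $k$-linear map
\[
\phi\colon k[x_1^p,\ldots,x_n^p]\lp\inv(F),\qquad \phi(g)=M(x_n^{p-1}g),
\]
is an isomorphism. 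Indeed, $\ker(\phi)=0$ yields $x_n^{p-1}k[x_1^p,\ldots,x_n^p]\cap\ker(M)=0$, and $\phi$ surjective yields $M\bigl(x_n^{p-1}k[x_1^p,\ldots,x_n^p]\bigr)=\im(M)$, so $k^{[n]}=x_n^{p-1}k[x_1^p,\ldots,x_n^p]+\ker(M)$; the two statements together are exactly the desired direct sum.

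To prove $\phi$ is an isomorphism I would use a triangularity argument for the weighted degree $\deg_F$ of Definition~\ref{defVd}, refined (if a tie-break is needed) to the monomial order $\prec$ defined by ``$\deg_F$ first, then lexicographic with $x_1\succ\cdots\succ x_n$''. By Theorem~\ref{T1}, $\inv(F)=k[\tilde x_1,\ldots,\tilde x_n]$ with $\tilde x_i=x_i^p-a_i^{p-1}x_i+b_i$, $a_i,b_i\in k[x_{i+1},\ldots,x_n]$; condition~(2b) forces the $\prec$-leading monomial of $\tilde x_i$ to be $x_i^p$, so the monomials $\tilde x^e:=\tilde x_1^{e_1}\cdots\tilde x_n^{e_n}$ and $(x^p)^e:=x_1^{pe_1}\cdots x_n^{pe_n}$ have $\prec$-leading monomial $(x^p)^e$ for every $e\in\N^n$, and these are pairwise distinct. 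Thus $\{\tilde x^e\}$ and $\{(x^p)^e\}$ are bases of $\inv(F)$ and of $k[x_1^p,\ldots,x_n^p]$ related unitriangularly for $\prec$, and $(x^p)^e=\tilde x^e+r_e$ with every monomial of $r_e$ strictly $\prec$-below $(x^p)^e$. Using that $F$, hence $M=\sum_{i=0}^{p^n-1}F^i$, preserves every $k^{[n]}_d$, together with $M(x_n^{p-1}\tilde x^e)=-f_n^{p-1}\tilde x^e$, one gets $\phi\bigl((x^p)^e\bigr)=-f_n^{p-1}\tilde x^e+M\bigl(x_n^{p-1}r_e\bigr)$ with the correction term lying in $\inv(F)$ and $\prec$-below $(x^p)^e$, so that $\phi$ is, in the bases $\{(x^p)^e\},\{\tilde x^e\}$, unitriangular up to the unit $-f_n^{p-1}$. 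Injectivity and surjectivity of $\phi$ then follow by the usual leading-term arguments (for $g\neq 0$ the $\prec$-leading term of $\phi(g)$ survives; each $\tilde x^e$ is recovered from $\phi((x^p)^e)$ by $\prec$-downward induction, with base case $\phi(\mathrm{const})\in k^*$).

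The step I expect to be the actual work is the estimate that $M\bigl(x_n^{p-1}r_e\bigr)$ is $\prec$-below $(x^p)^e$: applying $M$ after multiplying by $x_n^{p-1}$ only gives, a priori, that $M(x_n^{p-1}r_e)$ is $\prec$-below $x_n^{p-1}(x^p)^e$, which is too weak. To bridge the gap one must use that every nonzero element of $\inv(F)$ has $\deg_F$ divisible by $p$ (the observation behind Lemma~\ref{Wdeg}), whereas $\deg_F\bigl(x_n^{p-1}r_e\bigr)\equiv p-1\pmod p$, so the $\deg_F$ of $M(x_n^{p-1}r_e)$ drops below the next multiple of $p$; this in turn rests on the exact description $M(x^\alpha)=\sum_{i=0}^{p^n-1}\bigl(F^i(x^\alpha)-x^\alpha\bigr)$ (using $p^n=0$ in $k$) of how $M$ moves individual monomials down. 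If this bookkeeping proves awkward, the same conclusion can be reached inside the finite-dimensional pieces $V_{pd}$, $W_{pd}=(k^{[n]}_{pd})^F$ of the exact sequence in the proof of $T_{\ref{T2}}[n]$: one checks $\dim W_{pd}=\#\{e:\deg_F((x^p)^e)\le pd\}=\dim\bigl(x_n^{p-1}k[x_1^p,\ldots,x_n^p]\cap V_{pd}\bigr)$, and together with the injectivity of $\phi|_{V_{pd}}$ this forces $V_{pd}=\bigl(x_n^{p-1}k[x_1^p,\ldots,x_n^p]\cap V_{pd}\bigr)\oplus\bigl(\im(N)\cap V_{pd}\bigr)$ for every $d$; the union over $d$ is the corollary. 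This is the $n$-variable analogue of part~(5) of the proof of Lemma~\ref{exact1}.
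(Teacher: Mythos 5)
Your proposal is correct and follows essentially the same route as the paper: starting from Lemma \ref{split} and Theorem \ref{T1}, you exchange the complement $x_n^{p-1}\inv(F)$, with basis $\{x_n^{p-1}\tilde{x}_1^{a_1}\cdots\tilde{x}_n^{a_n}\}$, for $x_n^{p-1}k[x_1^p,\ldots,x_n^p]$ by a leading-term triangularity argument with respect to a lex-type order, which is exactly the basis exchange the paper performs. The only difference is one of detail: the paper simply asserts the exchange from the fact that the lex leading term of $\tilde{x}_1^{a_1}\cdots\tilde{x}_n^{a_n}$ is $x_1^{pa_1}\cdots x_n^{pa_n}$, whereas you make the required isomorphism $\phi(g)=M(x_n^{p-1}g)$ explicit and correctly flag the control of the correction term $M(x_n^{p-1}r_e)$ (equivalently, the triviality of $x_n^{p-1}k[x_1^p,\ldots,x_n^p]\cap\im(N)$) as the place where the actual bookkeeping lives — a point the paper's own proof passes over silently.
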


\begin{proof} 
Theorem \ref{T1} tells us that $\inv(F)=k[\tilde{x}_1,\ldots,\tilde{x}_n]$. 
Lemma \ref{split} tells us that a $k$-basis of a representant system of $k^{[n]}/\im(N)$ is $\{ x_n^{p-1} \tilde{x}_1^{a_1}\cdots \tilde{x}_n^{a_n} 
~|~ a_i\in \N\}$. 
Now let us use the standard lexicographic ordering $\dlex$ on $k^{[n]}$ by stating $x_1>>\ldots >> x_n$ (and beyond that the standard ordering, like  $\dlex(x_i^2)>\dlex(x_i)$). Then of any  $\tilde{x}_1^{a_1}\cdots \tilde{x}_n^{a_n}$, the leading term w.r.t. $\dlex$ is $x_1^{pa_1}\cdots x_n^{pa_n}$. 
This means that the following is also a $k$-basis of a representant system of $k^{[n]}/\im(N)$: 
$\{ x_n^{p-1} {x}_1^{pa_1}\cdots {x}_n^{pa_n} 
~|~ a_i\in \N\}$. 
In other words, we can pick $x_n^{p-1}k[x_1^p,\ldots,x_n^p]$ as a representant system, and the result follows. 
\end{proof}

\begin{corollary} \label{splitCC} (of theorem \ref{T1} and lemma \ref{split}.)
Let $F=(x_1+f_1,\ldots, x_n+f_n)$ such that $\ord(F)=p^n$.
Then $F$ is equivalent to exactly one 
$G=(x_1+f'_1,\ldots, x_{n-1}+f'_{n-1}, x_n+f_n)$ where $f_i'\in x_n^{p-1}(k^{[n]})^G$ if $1\leq i\leq n-1$. 
\end{corollary}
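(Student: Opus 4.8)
The plan is to exploit the decomposition from Corollary~\ref{split2} (or equivalently Lemma~\ref{split}) successively on each component, starting from the top index and working down, and to check at each stage that the ``already-cleaned'' components are not disturbed. Concretely, given $F=(x_1+f_1,\ldots,x_n+f_n)$ with $\ord(F)=p^n$, I first note that $f_n\in k^*$ cannot be touched by conjugation, so the last component stays $x_n+f_n$. Then I conjugate by suitable maps $(x_i+g_i)$, for $i=n-1,n-2,\ldots,1$ in that order; by Lemma~\ref{conj} such a conjugation changes the $i$-th component by $N(g_i)$ (and alters components $F_1,\ldots,F_{i-1}$ by substitution $x_i\mapsto x_i-g_i$, but leaves components with index $>i$ untouched). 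So processing indices from large to small is the right order.

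The key step is the reduction of a single component modulo $\im(N)$. Fix $i$ and suppose components $i+1,\ldots,n$ are already in final form. Corollary~\ref{split2} gives $k^{[n]}=x_n^{p-1}k[x_1^p,\ldots,x_n^p]\oplus\im(N)$, so I can write $f_i = r_i + N(g_i)$ with $r_i\in x_n^{p-1}k[x_1^p,\ldots,x_n^p]$ uniquely determined; conjugating by $(x_i+g_i)$ replaces $f_i$ by $r_i$. However, the statement wants $f_i'\in x_n^{p-1}(k^{[n]})^G$, not $x_n^{p-1}k[x_1^p,\ldots,x_n^p]$. The cleanest route is: at the very end, once all components are cleaned to lie in $x_n^{p-1}k[x_1^p,\ldots,x_n^p]$, the resulting map $G$ still has order $p^n$ (order is a conjugacy invariant), so Theorem~\ref{T1} applies to $G$ and Lemma~\ref{split} (applied to $G$) gives $k^{[n]}=x_n^{p-1}\inv(G)\oplus\im(N_G)$, where $N_G=G-I$; I then use this decomposition to re-adjust, again top-down, so that each $f_i'$ lands in $x_n^{p-1}\inv(G)$. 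Alternatively, and more efficiently, one proves directly that the two representant systems $x_n^{p-1}k[x_1^p,\ldots,x_n^p]$ and $x_n^{p-1}\inv(G)$ both work by the leading-term argument already used in Corollary~\ref{split2}: the leading term of $\tilde{x}_1^{a_1}\cdots\tilde{x}_n^{a_n}$ w.r.t.\ $\dlex$ is $x_1^{pa_1}\cdots x_n^{pa_n}$, so a triangular change of basis converts one representant system into the other; hence one can carry out the whole cleaning process directly with respect to the $x_n^{p-1}\inv(G)$ system once $G$ is known, or argue that the map obtained via the $k[x_1^p,\ldots,x_n^p]$ system automatically has its components in $x_n^{p-1}\inv(G)$.

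For uniqueness: suppose $G=(x_1+f_1',\ldots,x_{n-1}+f_{n-1}',x_n+f_n)$ and $G'=(x_1+h_1',\ldots,x_{n-1}+h_{n-1}',x_n+f_n')$ are both in the stated form and conjugate. Since the $n$-th components are conjugate in $\BA_1$, and $f_n,f_n'\in k^*$, we get $f_n=f_n'$ (conjugation in $\BA_n(k)$ by a strictly triangular map fixes the last component exactly). Now any conjugating element can be written, via Lemma~\ref{conj}, as a product of elementary $(x_i+g_i)$; pushing it past the top index down, one shows inductively that the $n-1$ component of $G$ and $G'$ differ by an element of $\im(N)$, hence are equal because both lie in the complement $x_n^{p-1}\inv(G)=x_n^{p-1}\inv(G')$ (note $\inv$ is a conjugacy invariant up to the conjugating isomorphism, and since the top components already agree the relevant invariant rings coincide on the nose for the tail). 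Iterating down gives $f_i'=h_i'$ for all $i$, and the conjugating map is forced to be the identity on the relevant quotients.

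The main obstacle I expect is bookkeeping the interaction between the two representant systems $x_n^{p-1}k[x_1^p,\ldots,x_n^p]$ and $x_n^{p-1}\inv(G)$: the decomposition in Corollary~\ref{split2} is stated with the former, but the theorem is phrased with the latter, and $\inv(G)$ depends on $G$, which is only determined at the end of the process — so there is a mild circularity to untangle. I would resolve it by first running the whole reduction with the concrete system $x_n^{p-1}k[x_1^p,\ldots,x_n^p]$ to produce some normal form $G$, then invoking Lemma~\ref{split} for that specific $G$ and the $\dlex$ leading-term comparison to re-express the components in the $x_n^{p-1}\inv(G)$ system without leaving the conjugacy class, and finally checking that this re-expression is forced (uniqueness). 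The disturbance-of-lower-components issue is genuinely harmless here precisely because we go top-down and each elementary conjugation only affects strictly lower indices, exactly as in the proofs of Propositions~\ref{CC1} and Theorem~\ref{ThCC2}.
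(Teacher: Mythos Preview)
Your approach is essentially the paper's: process components from high index to low, reducing each $f_i$ modulo $\im(N)$ via Lemma~\ref{split}; the paper simply packages this as an induction on $n$. However, you overcomplicate matters and leave one small gap. The ``circularity'' you worry about is illusory: since $f_i'\in k[x_{i+1},\ldots,x_n]$, the condition $f_i'\in x_n^{p-1}\inv(G)$ is the same as $f_i'\in x_n^{p-1}\inv(\vec{G}_{i+1})$, and the tail $\vec{G}_{i+1}$ is already determined \emph{before} you touch component $i$. So one should apply Lemma~\ref{split} directly to the tail map $\vec{F}_{i+1}$ on $k[x_{i+1},\ldots,x_n]$ (which has order $p^{n-i}$), with no detour through Corollary~\ref{split2} and no second pass. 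This also closes the one genuine gap in your write-up: when you decompose $f_i=r_i+N(g_i)$ using the splitting of all of $k^{[n]}$, you only obtain $g_i\in k^{[n]}$, whereas $(x_i+g_i)\in\BA_n(k)$ requires $g_i\in k[x_{i+1},\ldots,x_n]$; using the tail decomposition instead gives this for free.
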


\begin{proof} 
By induction. The theorem is true for $n=1$. Assume the theorem is true for $n-1$.
This means we can assume $F=(x_1+f_1,x_2+f_2',\ldots, x_n+f_n')$, and that conjugation by $(x_i+g_i)$ where $2\leq i\leq n$ 
cannot be used anymore as it disturbes the form. Now, conjugation by $(x_1+g_1)$ makes it possible to change $f_1$ by elements of $\im(N)$. Using lemma \ref{split} we see that we can change to exactly one $f_1'\in x_n^{p-1}\inv(F)$. 
\end{proof}

The below is actually a corollary of corollary \ref{splitCC} and lemma \ref{split2}, but since the result is the most elegant one, we call it ``theorem'':

\begin{theorem} \label{splitCC2} 
Let $F=(x_1+f_1,\ldots, x_n+f_n)$ such that $\ord(F)=p^n$.
Then $F$ is equivalent to exactly one 
$G=(x_1+f'_1,\ldots, x_{n-1}+f'_{n-1}, x_n+f_n)$ where $f_i'\in x_n^{p-1}k[x_1^p,\ldots,x_n^p]$ if $1\leq i\leq n-1$. 
\end{theorem}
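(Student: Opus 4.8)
The plan is to deduce Theorem \ref{splitCC2} by combining Corollary \ref{splitCC} with the change-of-representant-system argument already used in the proof of Corollary \ref{split2}. First I would invoke Corollary \ref{splitCC}: after conjugation we may assume $F=G=(x_1+f_1',\ldots,x_{n-1}+f_{n-1}',x_n+f_n)$ with each $f_i'\in x_n^{p-1}(k^{[n]})^G$ for $1\le i\le n-1$, and this $G$ is the unique such representative. So the only thing left is to replace the representant system $x_n^{p-1}(k^{[n]})^G = x_n^{p-1}k[\tilde x_1,\ldots,\tilde x_n]$ of $k^{[n]}/\im(N)$ by the "standardized" one $x_n^{p-1}k[x_1^p,\ldots,x_n^p]$, exactly as in Corollary \ref{split2}, and track that this substitution interacts well with the inductive normalization.

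The key steps, in order: (1) State that by Corollary \ref{splitCC} we are reduced to a $G$ with $f_i'\in x_n^{p-1}(k^{[n]})^G$; by Theorem \ref{T1}, $(k^{[n]})^G=k[\tilde x_1,\ldots,\tilde x_n]$ where $\tilde x_j=x_j^p - a_j^{p-1}x_j+b_j$ with $a_j,b_j\in k[x_{j+1},\ldots,x_n]$. (2) Use the lexicographic ordering $\dlex$ with $x_1\gg\cdots\gg x_n$: the leading term of $\tilde x_1^{a_1}\cdots\tilde x_n^{a_n}$ is $x_1^{pa_1}\cdots x_n^{pa_n}$, so by the usual triangular-leading-term change of basis (Gaussian elimination on the monomial basis indexed by exponent vectors, ordered by $\dlex$), $\{x_n^{p-1}x_1^{pa_1}\cdots x_n^{pa_n}\}$ and $\{x_n^{p-1}\tilde x_1^{a_1}\cdots\tilde x_n^{a_n}\}$ span the same complement of $\im(N)$; hence $x_n^{p-1}k[x_1^p,\ldots,x_n^p]$ is also a representant system for $k^{[n]}/\im(N)$. (3) Conclude: given the $G$ from step (1), each $f_i'$ lies in $x_n^{p-1}(k^{[n]})^G$, which by step (2) decomposes uniquely as (an element of $x_n^{p-1}k[x_1^p,\ldots,x_n^p]$) plus (an element of $\im(N)$); conjugating by the appropriate $(x_i+g_i)$ absorbs the $\im(N)$-part into the $i$-th component without touching components $i+1,\ldots,n$ (Lemma \ref{conj}), and since $\im(N)\cap x_n^{p-1}k[x_1^p,\ldots,x_n^p]=0$ the resulting $f_i'\in x_n^{p-1}k[x_1^p,\ldots,x_n^p]$ is uniquely determined. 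For uniqueness of the whole $G$, note two such normal forms differing would force $f_i'-f_i''\in \im(N)$ for all $i$ while both lie in the complement $x_n^{p-1}k[x_1^p,\ldots,x_n^p]$, so they coincide.

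One subtlety I would be careful about — and which I expect is the main obstacle — is that the normalization is genuinely sequential: after normalizing the $n$-th, $(n-1)$-th, \ldots components one is only allowed to conjugate by maps $(x_1+g_1)$ for the very last component, since conjugating by $(x_i+g_i)$ with $i\ge2$ would re-disturb the already-normalized lower components (this is exactly the mechanism in the proof of Corollary \ref{splitCC}, via Lemma \ref{conj}). So I must make sure the change of representant system in step (2) is applied one index at a time, consistently with the induction in Corollary \ref{splitCC}, rather than all at once. Concretely: at the stage where only $(x_1+g_1)$-conjugations remain, $f_1$ is being reduced modulo $\im(N)=\ker(M)$ (Theorem \ref{T2}), and the two complements from step (2) give the same quotient $k^{[n]}/\im(N)$, so the unique representative in $x_n^{p-1}k[x_1^p,\ldots,x_n^p]$ is well-defined — this is where step (2) is actually used. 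The remaining verifications (that $\dlex$-leading terms behave as claimed, that the triangular base change is invertible over $k$) are routine and handled verbatim as in Corollary \ref{split2}.

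\begin{proof}
By Corollary \ref{splitCC}, $F$ is equivalent to exactly one map $G=(x_1+f_1',\ldots,x_{n-1}+f_{n-1}',x_n+f_n)$ with $f_i'\in x_n^{p-1}(k^{[n]})^G$ for $1\le i\le n-1$; moreover this normalization is carried out sequentially, the $i$-th component being reduced modulo $\im(N)=\ker(M)$ (Theorem \ref{T2}) by a conjugation $(x_i+g_i)$ that leaves components $i+1,\ldots,n$ untouched (Lemma \ref{conj}). By Theorem \ref{T1}, $(k^{[n]})^G=k[\tilde x_1,\ldots,\tilde x_n]$ with $\tilde x_j=x_j^p-a_j^{p-1}x_j+b_j$, $a_j,b_j\in k[x_{j+1},\ldots,x_n]$. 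As in the proof of Corollary \ref{split2}, with respect to $\dlex$ (for which $x_1\gg\cdots\gg x_n$) the leading term of $\tilde x_1^{a_1}\cdots\tilde x_n^{a_n}$ is $x_1^{pa_1}\cdots x_n^{pa_n}$; hence the $k$-linear change of basis from $\{x_n^{p-1}\tilde x_1^{a_1}\cdots\tilde x_n^{a_n}\mid a_i\in\N\}$ to $\{x_n^{p-1}x_1^{pa_1}\cdots x_n^{pa_n}\mid a_i\in\N\}$ is triangular and invertible, so $x_n^{p-1}k[x_1^p,\ldots,x_n^p]$ is again a complement of $\im(N)$ in $k^{[n]}$, i.e. a representant system for $k^{[n]}/\im(N)$. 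Therefore, at each stage of the sequential normalization, reducing the relevant component modulo $\im(N)$ into this new representant system is well-defined and yields a unique outcome; absorbing the $\im(N)$-part by the corresponding $(x_i+g_i)$ (Lemma \ref{conj}) does not affect the lower components. We conclude that $F$ is equivalent to a map $G=(x_1+f_1',\ldots,x_{n-1}+f_{n-1}',x_n+f_n)$ with $f_i'\in x_n^{p-1}k[x_1^p,\ldots,x_n^p]$ for $1\le i\le n-1$. Uniqueness: if two such normal forms were equivalent, running the sequential argument would force $f_i'-f_i''\in\im(N)$ for every $i$ while both $f_i',f_i''$ lie in the complement $x_n^{p-1}k[x_1^p,\ldots,x_n^p]$, whence $f_i'=f_i''$ for all $i$.
\end{proof}
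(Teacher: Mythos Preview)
Your proof is correct and follows essentially the same approach as the paper. The paper's one-line argument (``follows directly from Corollary \ref{split2}, with the same proof as \ref{splitCC} required'') means: run the inductive conjugation scheme of Corollary \ref{splitCC} verbatim, but using the representant system $x_n^{p-1}k[x_1^p,\ldots,x_n^p]$ supplied by Corollary \ref{split2} in place of $x_n^{p-1}\inv(F)$; you arrive at the same conclusion, just passing through the intermediate normal form of Corollary \ref{splitCC} first rather than going there directly.
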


\begin{proof} The result follows directly from  corollary \ref{split2}, with the same proof as \ref{splitCC} required.\end{proof}

\noindent
{\bf Warning:} The above theorem does NOT state that any sequence $f'_1,\ldots, f'_{n-1},f_n$ 
where 
$f_i'\in x_n^{p-1}k[x_1^p,\ldots,x_n^p]$ if $1\leq i\leq n-1$, $f_n\in k^*$ 
gives a map of order $p^n$.

\begin{remark} In \cite{Mau12} a similar case was done for $k=\F_p$ a finite field with $p$ elements, and then 
not considering the triangular automorphisms, but the permutations $\F_p^n\lp \F_p^n$ induced by them.
It was shown that if $F=(x_1+f_1,\ldots,x_n+f_n)$ and additionally, $\deg_{x_i}(f_j)\leq p-1$ for each $i,j$ (which you may assume as you're only interested in the map), then $F$ is of order $p^n$ if and only if for all $1\leq i\leq n$, the coefficient of 
$(x_{i+1}\cdots x_n)^{p-1}$ is nonzero. 
This statement is {\em not} in contradiction with corollary \ref{splitCC2} - an example is $(x+y^pz^{p-1},y+z^{p-1},z+1)$ which 
has order $p^3$, but restricted to $\F_p^3$ has order $p^2$. 
\end{remark}

\begin{question} The representation of \ref{splitCC2} is pleasing to the eye, but perhaps not the best if your goal is to actually iterate such an element. In characteristic zero, we can write $F=\exp(D)$ and then $F^n=\exp(nD)$. It would be nice if there's a way to 
have such an elegant description of iterates of $F$ in this characteristic $p$ case too.
\end{question}

\section{Characteristic $p$: the generic 2-variable case}
\label{vier}
\label{Sec4}

In this whole section \ref{vier}, $k$ is still of characteristic $p$.

\subsection{Conjugacy classes of $\BA_2(k)$}

The previous section now makes it easy to determine the conjugacy classes of elements in $\BA_2(k)$. 
(We shun away from the generic ring case, as it would require a deeper understanding of the map $N$ over rings. 
In particular, we need to understand  how lemma \ref{split} behaves in that case.)
There are only three types of maps: order 1 (the identity), order $p$, and order $p^2$. 
Order $p^2$ is taken care of by theorem \ref{splitCC2}, while order $p$ is taken care of by lemma \ref{lemmaP}. Gathering up these results we get with very little extra effort the following corollary:

\begin{corollary} \label{CCx} The conjugacy classes in $\BA_2(k)$ are
\begin{itemize}
\item  $(x,y+\lambda)$ where $\lambda\in k$,
\item   $(x+f(y), y)$ where $f\not =0$, and a unique representant in $k[y]/k$ under the action $k\times k[y]\lp k[y]$ given by $\lambda\cdot g(y)\lp g(y+\lambda)$,
\item  $(x+y^{p-1}f(y^p), y+\lambda)$ where $\lambda\in k^*$ and $f\not =0$. 
\end{itemize}
\end{corollary}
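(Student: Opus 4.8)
\textbf{Proof plan for Corollary \ref{CCx}.}

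The plan is to split according to the order of $F\in\BA_2(k)$, which must be $1$, $p$, or $p^2$ by part (2) of the lemma describing $F^{p^n}=I$ (with $n=2$). The order $1$ case is trivially the identity $(x,y)$, which sits inside the first family with $\lambda=0$. For the remaining two cases I would write $F=(x+f_1,y+f_2)$ and treat them separately.

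First, suppose $\ord(F)=p$. Then $F^p=I$, so by the lemma on $F^p$ fixing $x_n$ we have $f_2\in k$, and by Lemma \ref{lemmaP} (applied over $R=k$, using $f_2\in k^*$ when $f_2\neq 0$) $F$ is conjugate in $\BA_1(k[y])$ to $(x, y+f_2)$; when $f_2=0$ the map is $(x+f_1,y)$ with $f_1\in k[y]$. The point is then to reduce these two normal forms further by conjugating with elements $(y+\lambda)$ (and noting conjugation by $(x+g_1)$ can no longer help once $f_1=0$, resp.\ does nothing once we are in the first family). Conjugation by $(y+\lambda)$ sends $(x,y+f_2)$ to itself (since $f_2\in k$), giving the first bullet $(x,y+\lambda)$; and it sends $(x+f_1(y),y)$ to $(x+f_1(y+\lambda),y)$, so we must pick $f_1\neq0$ uniquely from its orbit under the translation action $\lambda\cdot g(y)=g(y+\lambda)$, which is the second bullet. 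I should double-check that no other conjugation (e.g. by a genuine degree-$\geq 1$ element in the $x$-slot, or by a non-unipotent map — but we are inside $\BA_2$, so all conjugators are strictly triangular) can identify two distinct such normal forms; this is the kind of ``disturbs the standard form'' argument used in Theorems \ref{ThCC2}--\ref{ThCC3}.

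Second, suppose $\ord(F)=p^2$. Then $f_2=y+f_2$ with $f_2\in k^*$ (otherwise the order drops to $\leq p$), and Theorem \ref{splitCC2} in dimension $2$ says $F$ is equivalent to exactly one $(x+f_1', y+f_2)$ with $f_1'\in y^{p-1}k[x^p,y^p]$; since $f_1'$ must lie in $k[x_2,\dots,x_n]=k[y]$ in the $2$-variable strictly-upper-triangular setting, in fact $f_1'\in y^{p-1}k[y^p]$, so $f_1'=y^{p-1}f(y^p)$ for some $f\in k[y]$. I would then observe $f\neq 0$: if $f=0$ the map is $(x,y+f_2)$, which has order $p$, contradicting $\ord(F)=p^2$. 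That gives the third bullet $(x+y^{p-1}f(y^p), y+\lambda)$ with $\lambda=f_2\in k^*$, $f\neq 0$, and the uniqueness is exactly the ``exactly one'' clause of Theorem \ref{splitCC2}. Finally I would check the three families are genuinely disjoint (distinct orders for the first/third vs.\ the others; within each family, the uniqueness already established), so these are precisely the conjugacy classes.

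The main obstacle I anticipate is not any deep computation — everything heavy is already packaged in Theorem \ref{splitCC2} and Lemma \ref{lemmaP} — but rather the bookkeeping of \emph{residual} conjugations in the order-$p$ case: after normalizing via Lemma \ref{lemmaP} one must carefully track which further conjugators (the $(y+\lambda)$'s, the $(x+g_1)$'s) still act and verify the translation-orbit description of the second bullet is both well-defined and complete, i.e.\ that two normal forms $(x+f_1(y),y)$ and $(x+\tilde f_1(y),y)$ are conjugate in $\BA_2(k)$ if and only if $\tilde f_1(y)=f_1(y+\lambda)$ for some $\lambda\in k$. This is routine given Lemma \ref{conj} and the domain argument, but it is where a little care is needed.
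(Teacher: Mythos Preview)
Your proposal is correct and follows essentially the same approach as the paper: split by order ($1$, $p$, or $p^2$), apply Lemma \ref{lemmaP} for the order-$p$ case with $f_2\neq 0$ (and handle $f_2=0$ directly via the residual $(y+\lambda)$-conjugation), and apply Theorem \ref{splitCC2} for the order-$p^2$ case. One harmless quibble: the fact that $f_2\in k$ is automatic for any $F\in\BA_2(k)$ (since $f_n\in k[x_{n+1},\dots,x_n]=k$), not a consequence of $F^p=I$.
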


\subsection{Conjugacy classes of $\BA_2(k)$ in $\BAA_2(k)$}

Now the result of corollary \ref{CCx} can now be slightly improved:

\begin{corollary} \label{CCxx} The conjugacy classes in $\BA_2(k)$ are
\begin{itemize}
\item $(x,y)$,
\item  $(x,y+1)$,
\item   $(x+f(y), y)$ where $f\not =0$ and monic, and a unique representant in $k[y]$ under the action $\GA_1(k) \times k[y]\lp k[y]$ given by $(\mu y+\lambda)\cdot g(y)\lp g(\mu y+\lambda)$,
\item  $(x+y^{p-1}f(y^p), y+1)$ where $\lambda\in k^*$ and $f\not =0$ monic.
\end{itemize}
\end{corollary}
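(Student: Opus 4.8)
The plan is to upgrade Corollary \ref{CCx} by taking into account the extra conjugations available in $\BAA_2(k)$, namely the diagonal linear maps $(\mu x, \nu y)$ with $\mu,\nu\in k^*$, combined with the already-used strictly triangular conjugations. I would run through the three families of Corollary \ref{CCx} one at a time. For the family $(x, y+\lambda)$ with $\lambda\in k$: if $\lambda=0$ we get the identity $(x,y)$; if $\lambda\neq 0$, conjugation by $(x,\nu y)$ replaces $\lambda$ by $\nu\lambda$, and since $\lambda\in k^*$ and $k$ is a field, we can normalize $\nu\lambda=1$, giving $(x,y+1)$. These are the first two items. (I should also check these are genuinely distinct, i.e. the identity is not conjugate to $(x,y+1)$ — clear since conjugation preserves order.)

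Next, the family $(x+f(y), y)$ with $f\neq 0$: conjugation by $(\mu x, y)$ scales $f$ by $\mu$, so we may take $f$ monic. On top of that, conjugation by $(x, \mu y + \lambda)$ acts on the second argument of $f$, replacing $f(y)$ by $f(\mu y+\lambda)$ up to the scalar coming from the first-coordinate adjustment; tracking the interplay of the $x$-scaling and the $y$-affine substitution shows the residual freedom is exactly the full affine group $\GA_1(k)$ acting by $g(y)\mapsto g(\mu y+\lambda)$ (after rescaling back to monic). So a representative is $(x+f(y),y)$ with $f$ monic, picked uniquely from its $\GA_1(k)$-orbit. This is the third item. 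For the last family $(x+y^{p-1}f(y^p), y+\lambda)$ with $\lambda\in k^*$, $f\neq 0$: first conjugate by $(x,\nu y)$ to normalize $\lambda$ to $1$ (as in the first case); this simultaneously rescales $y^{p-1}f(y^p)$, and then conjugating by $(\mu x, y)$ we can make $f$ monic. One must check that no further conjugation both preserves the normal form $(x + y^{p-1}f(y^p), y+1)$ and changes $f$ — conjugation by $(x,\nu y)$ would move $\lambda$ away from $1$, and conjugation by $(\mu x,y)$ would break monicity, so the form is rigid. This gives the fourth item.

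The main obstacle, and the step deserving the most care, is the bookkeeping in the third family: I need to verify that the combination ``scale $x$ to make $f$ monic'' and ``apply an affine substitution in $y$'' really does realize the whole $\GA_1(k)$-action on $k[y]$ and nothing more — in particular that applying an affine substitution $y\mapsto \mu y+\lambda$ and then re-monicizing is a well-defined action on monic polynomials, and that no two distinct orbits collapse. I would also double-check that the affine substitution in $y$ does not reintroduce a nonzero component in the second coordinate or otherwise leave $\BA_2$; since $(x,\mu y+\lambda)$ conjugating $(x+f(y),y)$ yields $(x+f(\mu y+\lambda)\cdot(\text{unit}), y)$ — the second coordinate $y$ is fixed by any such conjugation — this is fine. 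The rest is routine: it is a direct refinement of Corollary \ref{CCx} using only the observation that $k^*$ acts transitively on $k^*$ (so every nonzero translation/scaling can be normalized to $1$) and that an extra $k^*$ worth of $x$-scalings lets us demand monicity.
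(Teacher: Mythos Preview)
Your approach is essentially the same as the paper's: both start from Corollary~\ref{CCx} and refine each family using the additional diagonal conjugations $(\mu x, \nu y)$ available in $\BAA_2(k)$, obtaining monicity from $(\lambda x)$ and, in the third item, the residual $\GA_1(k)$-action from $(\mu y+\lambda)$. Your write-up is in fact more detailed than the paper's sketch; the one small point to tighten is the rigidity check in the fourth item, where you should note explicitly that any $\BAA_2$-conjugation factors as a diagonal times a $\BA_2$-element, and since the latter cannot alter the second component $y+1$, the diagonal part must have $\nu=1$, after which your argument goes through.
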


\begin{proof}
The proof is very similar to the proof of \ref{ThCC2}, and in general easier since we're sticking with fields.
We just give a brief sketch of the two main points: \\
(1) ``monic'' in the fourth and third bullet point we get by conjugating with $(\lambda x)$ for suitable $\lambda$.\\
(2) The third bullet point is the only one where conjugation by $(\mu y+\lambda)$ does not change the second component, but can change the first component. Here, the ``characteristic $p$'' shows its head: we cannot use $y\lp y+\lambda$ to make sure that the $d-1$-th coefficent of $f$ is zero (where $d=\deg(f)$). So, we're essentially stuck in stating that $f$ should be picked unique under its equivalent forms under $y\lp \mu y+\lambda$. 
\end{proof}

\subsection{Conjugacy classes of $\BAA_2(k)$}

\begin{theorem} Let $k$ be a field of characteristic $p$. Then the conjugacy classes of $F=(F_1,F_2)$ in $\BAA_2(k)$ are
\begin{itemize}
\item Affine, 
\item   $(ax+f(y), y)$ where $a\in k^*$, $f\not =0$ and monic,  and a unique representant in $k[y]$ under the action $\GA_1(k) \times k[y]\lp k[y]$ given by $(\mu y+\lambda)\cdot g(y)\lp g(\mu y+\lambda)$,
\item  $(x+y^{p-1}f(y^p), y+1)$ where $\lambda\in k^*$ and $f\not =0$ monic.
\item $(a^dx+y^df(y^m), ay)$ where $m=\ord(a)$ ($m=0$ if $a$ no root of unity). $f\not = 0$ is monic. 
\end{itemize}
\end{theorem}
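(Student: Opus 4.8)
The plan is to repeat the strategy used for the characteristic-zero $\BAA_2$ classification and for Corollary \ref{CCxx}: write a general element as $F=(bx+f(y),\,ay+\lambda)$ with $a,b\in k^*$, $f\in k[y]$ and $\lambda\in k$, normalise the second component first, and then, holding that normal form fixed, determine which further conjugations in $\BAA_2(k)$ are still available.

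First I would conjugate by a map of the form $(x,\,\psi(y))$ with $\psi\in\GA_1(k)$; this replaces $F_2$ by $\psi^{-1}\circ F_2\circ\psi$ (and $f$ by $f\circ\psi$), so by Lemma \ref{G1} we may assume $F_2=ay$ for some $a\in k^*$ (possibly $a=1$, i.e.\ $F_2=y$) or $F_2=y+1$. A direct computation, parallel to Lemma \ref{conj}, shows that conjugating $F$ by $(x+g(y),\,y)$ leaves $F_2$ unchanged and replaces $f$ by $f+L_b(g)$, where $L_b\colon k[y]\to k[y]$ is the $k$-linear map $L_b(g)=b\,g(y)-g(F_2(y))$. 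So the problem reduces to (i) computing $\operatorname{coker}(L_b)$ for the three shapes of $F_2$, and (ii) tracking the residual conjugations by diagonal and $y$-substitution maps that preserve the chosen $F_2$.

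The case analysis then runs as follows. If $a=b=1$, i.e.\ $F\in\BA_2(k)$, the statement is precisely Corollary \ref{CCxx} (its classes are bullets 1, 2 and 3 of the present theorem). If $F_2=y$ or $F_2=y+1$ but $b\ne1$, then $L_b$ is surjective: for $F_2=y$ it is multiplication by $b-1\in k^*$, and for $F_2=y+1$ it is degree-preserving (leading term $(b-1)y^{\deg g}$), hence bijective; so $f$ can be cleared and $F$ is affine. If $F_2=ay$ with $a\ne1$, then $L_b$ acts diagonally on monomials, $y^m\mapsto(b-a^m)y^m$, so $\operatorname{coker}(L_b)$ is spanned by the $y^m$ with $a^m=b$; if $b$ is not a power of $a$ this is $0$ and $F$ is affine, and otherwise, writing $d$ for the least exponent with $a^d=b$ and $m=\ord(a)$ (with $m=0$ if $a$ is not a root of unity), the cokernel is $y^d k[y^m]$, so $f$ reduces to $y^d\tilde f(y^m)$ and $F\sim(a^dx+y^d\tilde f(y^m),\,ay)$. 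In this last family the only remaining freedom is conjugation by $(\mu x,\,\nu y)$ with $\mu,\nu\in k^*$, which sends $y^d\tilde f(y^m)$ to $\mu^{-1}\nu^d\,y^d\tilde f(\nu^m y^m)$; one spends $\mu$ to make $\tilde f$ monic and then picks the representative uniquely inside the orbit of the $\nu$-action, giving the fourth bullet (the degenerate sub-cases $\tilde f=0$, and $d=0$ with $\tilde f$ constant, land in the affine bullet). The affine conjugacy classes themselves are handled componentwise by Lemma \ref{G1}, exactly as in the characteristic-zero statement.

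The only genuinely new ingredient beyond the characteristic-zero and $\BA_2$-in-$\BAA_2$ arguments is Lemma \ref{exact1} (equivalently Corollary \ref{Ring}), which identifies $k[y]/\im(N)\cong y^{p-1}k[y^p]$ for the order-$p$ map $y\mapsto y+1$ and is what produces the form $x+y^{p-1}f(y^p)$ in the $b=1$, $F_2=y+1$ case; everything else is assembled from results already in the paper. I expect the main obstacle to be not any single computation but the bookkeeping of residual symmetries: checking that $(x,\,\nu y+c)$-conjugation really preserves each normal form of $F_2$ (forcing $c=0$ unless $a=1$, and $\nu=1$ when $F_2=y+1$), and then tracking the induced action on the normal form of $f$ precisely enough to state non-redundant representatives---in particular the twisted scaling $y^d\tilde f(y^m)\mapsto\mu^{-1}\nu^d y^d\tilde f(\nu^m y^m)$ in the $ay$-family, and the fact that in characteristic $p$ one can no longer spend a $y$-translation to kill the subleading coefficient of $f$ in the $y+1$ family.
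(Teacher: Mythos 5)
Your proposal is correct and follows essentially the same route as the paper: normalise $F_2$ via Lemma \ref{G1}, analyse the twisted difference map $g\mapsto bg-g(F_2(y))$ case by case (surjective unless $b$ is a power of the multiplier of $F_2$), and invoke Corollary \ref{CCxx} (hence Lemma \ref{exact1}) to produce the $x+y^{p-1}f(y^p)$ form in the unipotent $y+1$ case. If anything you are slightly more careful than the paper: in the $F_2=y$, $b\neq 1$ case your clearing of $f$ shows the second bullet is only genuinely needed for $a=1$ (for $a\neq 1$ it collapses into the affine class), and you also track the residual $(\mu x,\nu y)$-scaling on the family $(a^dx+y^d\tilde f(y^m),ay)$, which the paper's proof leaves implicit.
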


\begin{proof}
There are overlaps with the characteristic zero case, but we still give a complete proof.
If $F=(F_1,F_2)$ then we split up the cases we get from lemma \ref{G1}: $F_2=y, F_2=y+1$ and $F_2=ay$ where $a\not =1$. \\
(1)  $y$: Now $F_1=ax+f(y)$. Then we can make $f$ monic by conjugating with $(\mu x)$. Then, we can conjugate by $(\mu y+\lambda)$, and the result follows. 
which forces us  where $f(y)$ is monic. 
Again we can now conjugate by $y\lp \mu y+\lambda$ and we have no simpler way than just stating this.\\
(2) $y+1$: now $F=(ax+f(y), y+1)$. If $a=1$, then we're in the case of corollary \ref{CCxx}, and 
$F=(x+y^{p-1}f(y^p),y+1)$ where $f$ is monic or zero.
If $a\not =1$ then we need to consider what happens by conjugation with $(x+g(y))$. Then we can change $f$ by $ag(y)-g(y+1)$ and thus
we need to understand the map $g(y)\lp ag(y)-g(y+1)$. This map sends $y^d$ to $(a-1)y^d+$lower order terms. Since $a-1$ is invertible (as $a\not=1$), it is clear that this map is surjective (and actually bijective). Thus, we can conjugate to $(ax,y+1)$ which is affine. \\
(3) $ay$: If we conjugate $(bx+f(y),ay)$ by $(x+g(y))$ then we are modifying $f$ by $bg(y)-g(ay)$. 
We thus need to understand the map $g(y)\lp bg(y)-g(ay)$, which preserves monomials. The image of $y^d$ is $(b-a^d)y$, so 
we need to know when $b=a^d$; if this never happens, then we can conjugate to $(bx,ay)$ which is affine. So, write $b=a^d$ where $d\in N^*$ as minimal as possible. If $\ord(a)=m$ (automatically $d<m$) then we get $(bx+y^df(y^m), ay)$. If $a$ is no root of unity we get $(bx+\lambda y^d, ay)$. In both cases, we can conjugate by $(\mu x)$, and make sure that we have monic polynomials.
\end{proof}

\section{Finite order automorphisms in $\BA_2(k)$}

\label{Sec5}

In this section, we classify the finite order automorphisms for all fields $k$. We give a stand-alone proof (except for reference to \ref{exact1}) even though we could use the previous sections, and a slightly different classification, as we expect this to be of high interest.

\begin{lemma}
Let $F\in \GA_2(k)$ be of finite order $\ord(F)=s$, i.e. $F^s=I$.
Then $F$ can be conjugated to the following standard forms (unique up to as stated):
\begin{itemize}
\item[\textbf{A}] (affine)\\ An affine map (up to conjugation within the affine group)
\item[\textbf{U}] (unipotent) If characteristic $k$ is $p$, \\
 $(x+y^{p-1}f(y^p),y+1)$ where $f(y^p)\in k[y^p]$ monic nonzero.\\
$\ord(F)=p^2$.
\item[\textbf{M}] (mixed)\\ $(x+f(y^m), ay)$ where $\ord(a)=m$.
$\ord(F)=\lcm(p,m)$.
\item[\textbf{S}] (sequential)\\ $(a^mx+y^mf(y^{lm}),ay)$ where $a\in k^*$ satisfy $a^{ml}=1$ for some $m,l\in \N^*$ ($m$ chosen as small as possible, $l=\ord(a^m)>0$) and $f(y^{ml})\in k[y^{ml}]$ nonzero, and unique up to substitution  $y\lp \lambda y$ where $\lambda \in k^*$.
$\ord(F)=ml$.
\end{itemize}\end{lemma}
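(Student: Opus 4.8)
The plan is to start from the classification provided by Lemma \ref{G1}, which tells us that after conjugation the second component $F_2$ is one of $y$, $y+1$, or $ay$ with $a\neq 0,1$. I would then handle these three cases in turn, and within each case impose the finite-order hypothesis $F^s=I$ to cut the possibilities down to the four standard forms. The reduction of the \emph{first} component in each case is exactly the computation already carried out in the proof of the characteristic-$p$ classification of $\BAA_2(k)$ (and of Lemma \ref{G1}): conjugating $(bx+f(y), F_2)$ by $(x+g(y))$ modifies $f$ by $bg(y)-g(F_2(y))$, and one analyzes the induced $k$-linear operator $g\mapsto bg-g\circ F_2$ on $k[y]$ to determine which $f$ survive.

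First I would dispose of the case $F_2=y$. Here $F=(bx+f(y),y)$, and $F^s=(b^sx+(\sum_{i=0}^{s-1}b^i)f(y),y)$; finite order forces either $b\neq 1$ (then the map is affine after clearing $f$, since $g\mapsto bg-g$ is bijective) or $b=1$ with $\sum f$-type conditions. If $b=1$ and $f\neq 0$, then $F^s=(x+sf(y),y)$, so $F$ has finite order iff $sf=0$ in $k[y]$, i.e. $\kar k=p$ and $p\mid s$; and then $F^p=I$, so $F$ lands in the affine-or-order-$p$ world — in fact $(x+f(y),y)$ itself already has order $p$, so this is subsumed under case \textbf{A} unless... actually one must be careful: $(x+f(y),y)$ with $f\neq 0$ is \emph{not} affine, but it has order $p$, and it is conjugate (via the diagonal $(\lambda x)$) to one with $f$ monic. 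I expect this to slot in as a degenerate instance that I should either list explicitly or fold into \textbf{M} with $m=1$, $a=1$; I'd check which presentation the authors intend and match it. The case $F_2=y+1$: $F=(bx+f(y),y+1)$. If $b\neq 1$ the operator $g\mapsto bg-g(y+1)$ is bijective on $k[y]$ (it raises nothing and is injective since $b-1\in k^*$ dominates the top coefficient), so $f$ clears and $F$ is affine. If $b=1$ we are in $\BA_2(k)$; by Corollary \ref{CCxx} (itself resting on Theorem \ref{splitCC2}) the map is conjugate to $(x+y^{p-1}f(y^p),y+1)$ with $f(y^p)\in k[y^p]$ monic nonzero (or $f=0$, affine), giving form \textbf{U}; and $\ord F=p^2$ here because $F^p=(x+M(y^{p-1}f(y^p)),y)$ with $M(y^{p-1})=-1\neq 0$ so $F^p\neq I$ while $F^{p^2}=I$ by the order-$p^n$ structure in dimension $2$.

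The remaining and most substantive case is $F_2=ay$ with $a\neq 0,1$. Write $F=(bx+f(y),ay)$. Conjugating by $(x+g(y))$ changes $f$ by $bg(y)-g(ay)$, an operator diagonal on monomials: $y^d\mapsto (b-a^d)y^d$. So the $y^d$-component of $f$ is removable unless $b=a^d$. Finite order of $F$ forces $b$ to be a root of unity (project to the $x$-axis scaling), so exactly the powers $a^d$ can equal $b$; let $m\in\N^*$ be minimal with $b=a^m$ (if no such $m$, $f$ clears entirely and $F=(bx,ay)$ is affine). If $a$ is not a root of unity, only $d=m$ has $a^d=b$, so $f$ reduces to a single monomial $\lambda y^m$ and after a diagonal conjugation $F=(a^mx+y^m,ay)$; but then $F^s=(\,\cdot\,,a^sy)$ cannot be $I$ for any $s$ unless $a$ is a root of unity — contradiction — so in the finite-order setting $a$ \emph{must} be a root of unity, say $\ord(a)=$ some value. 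With $a$ a root of unity and $b=a^m$, the surviving exponents $d$ with $a^d=b=a^m$ are exactly $d\equiv m$ modulo $l:=\ord(a^m)$... here I need to be careful to match the parametrization $a^{ml}=1$, $l=\ord(a^m)$: the reduced $f$ is then $y^m f(y^{lm})$, i.e. only monomials $y^{m+lm\cdot j}$ can appear (this is where the condition $a^{d}=a^m$ combined with $d\ge m$ minimal forces $d-m$ to be a multiple of $l$, and then further multiples track $\ord(a^m)$). A residual diagonal conjugation $y\mapsto\lambda y$ remains, fixing $f$ up to that substitution, and scaling makes the result canonical. I would then compute $\ord(F)$ directly: the second component contributes $\ord(a)=ml$ (since $a^{ml}=1$ minimally, as $m$ minimal and $l=\ord(a^m)$), and one checks $F^{ml}$ acts trivially on the first component too, e.g. via $F^{ml}=(\,x+M'(y^mf(y^{lm}))\,,\,y)$ with the relevant sum over an orbit vanishing by the same $M$-vs-$N$ exactness used in Lemma \ref{exact1}/Theorem \ref{T2} — this yields $\ord(F)=ml$, form \textbf{S}. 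The \textbf{M} form $(x+f(y^m),ay)$ with $\ord(a)=m$ is the sub-case $b=1$ (so $a^m=1$, i.e. $l$ absorbed) where the operator $g\mapsto g-g(ay)$ kills exactly the $y^{m\cdot j}$ monomials, leaving $f(y^m)$, and $\ord(F)=\lcm(p,m)$ because the $x$-direction now behaves like a $+f$ translation of order $p$ composed with the order-$m$ rotation.

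\textbf{Main obstacle.} The routine parts are bijectivity/surjectivity of the operators $g\mapsto bg - g\circ F_2$, which are straightforward degree/leading-coefficient arguments. The genuinely delicate point is the $F_2=ay$ analysis: correctly identifying which monomials $y^d$ survive (the arithmetic of $a^d=a^m$ with $a$ a root of unity, disentangling the minimal $m$ from $l=\ord(a^m)$ so that the surviving set is precisely $\{m + lmj : j\ge 0\}$ rather than some larger or smaller progression), and then nailing the exact order $\ord(F)=ml$ versus $\lcm(p,m)$ in the \textbf{M} versus \textbf{S} dichotomy — in particular verifying that $F^{ml}$ (resp. $F^{\lcm(p,m)}$) really is the identity on the first coordinate, which is where the order-$p^n$ exactness machinery of Section \ref{Sec3} does the work. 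Keeping the diagonal-conjugation normalization ($f$ monic, unique up to $y\mapsto\lambda y$) consistent across the cases is the bookkeeping hazard.
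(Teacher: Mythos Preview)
Your proposal has one genuine gap at the very start: the lemma is stated for $F\in\GA_2(k)$, not for $F\in\BAA_2(k)$, and you never explain why $F$ may be assumed triangular. Lemma~\ref{G1} only normalizes the second component of a map that is \emph{already} triangular; for an arbitrary $F\in\GA_2(k)$ of finite order the second component need not lie in $k[y]$ at all. The paper's proof opens with precisely this missing step: by the Jung--van der Kulk theorem, $\GA_2(k)$ is the amalgamated product of $\Aff_2(k)$ and $\BAA_2(k)$ along their intersection, and a finite-order element of an amalgamated product is conjugate into one of the factors. This is what produces the dichotomy ``affine or triangular'' and justifies restricting to $F\in\BAA_2(k)\setminus\Aff_2(k)$. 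The paper also invokes the same structure to argue that if a non-affine triangular $F$ is conjugated by some $G\in\GA_2(k)$ back into $\BAA_2(k)$, then $G$ itself must be triangular --- this is what legitimizes doing all the remaining conjugacy analysis \emph{inside} $\BAA_2(k)$ and underpins the uniqueness assertions. You should add this reduction explicitly.

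Once that reduction is in place, your case analysis is essentially the paper's: normalize the second component (the paper writes $by+c$ and reduces to $by$ or $y+1$, matching your three cases $y$, $y+1$, $ay$), then conjugate by $(x+g(y))$ and study the operator $g\mapsto bg-g\circ F_2$ monomial-by-monomial. The paper handles the unipotent case $b=1$, $F_2=y+1$ by appealing directly to Lemma~\ref{exact1} (the one-variable $\ker M=\im N$ computation) rather than going through Corollary~\ref{CCxx}/Theorem~\ref{splitCC2} as you do, but the content is the same. Your hesitation about where $(x+f(y),y)$ with $f\neq 0$ belongs is resolved in the paper by placing it under \textbf{M} with $a=1$, $m=1$ (so $\ord F=\lcm(p,1)=p$), which is exactly the option you suggest. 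The arithmetic you flag in the \textbf{S} case (minimal $m$ with $a^m=b$, $l=\ord(a^m)$, surviving exponents forming $m+ml\Z$) is indeed the delicate bookkeeping, and the paper treats it at about the same level of detail you propose.
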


\begin{proof}
If $F \in \GA_2(k)$ of finite order, then using the Jung-v/d Kulk theorem it is easy to prove that $F$ must be up to conjugation either in $\Aff_2(k)$ or $\B_2(k)$.
We thus assume $F\in \B_2(k)\backslash \Aff_2(k)$. If one conjugates $F$ by some $G\in \GA_2(k)$ and $G^{-1}FG\in \B_2(k)$, then
$G\in \B_2(k)$ because of the same reason. We thus can consider the conjugacy classes within $\B_2(k)$.

Let us write $F=(ax+f(y),by+c)$. Since we are conjugating within $\B_2(k)$, we can first choose a unique form for $by+c$ within $\B_1(k)$. If $b\not =1$ then one can conjugate by $(y-\lambda) (by+c)(y+\lambda)=(by+c+(b-1)\lambda$ so choosing $\lambda=-c(b-1)^{-1}$ we get the standard form $by$. In case $b=1$, $c\not =0$ then we can conjugate  $(c^{-1} y)(y+c)(c y)=(y+1)$. Thus, we can assume $c=0$ or $b=1$.

\underline{\bf Case $c=0$}

$F=(ax+f(y),by)$.
Conjugate by $(x-g(y), y)(ax+f(y),by)(x+g(y),y)=(ax+f(y)+(ag(y)-g(by)),by)$. If $b^m\not =a$ for any $m\in \N$, then we can choose $g(y)$ such that $f(y)+(ag(y)-g(by))=0$ and achieve $(ax,by)$, an affine map. So:
{\bf We may assume $b=a^m$} for some $m\in \N$. In that case, the above conjugation can standardize $f(x)$ to a polynomial which is a linear combination of monomials $x^n$ such that $(bx)^n=ax^n$.

Assume $b\not =1$, i.e. $l:=\ord(b)>0$. Then  $n\in m+ml\Z$ where $l$ is such that $(a^m)^l=1$, i.e. we get $f(x)\in x^mk[x^{ml}]$. We can change this form a bit by conjugation with $(x,\lambda y)$, but we claim that beyond this, the form is unique:  if we conjugate by $(dx+g(y), ey+\lambda)$ then we see that $\lambda=0$ otherwise the form changed.
We can write
 $(dx+g(y), ey)=(x,ey)(dx,y)(x+d^{-1}g(y),y)$. We may ignore the conjugation by $(x,ey)$. The conjugation by $(dx,y)$ does not change the form. Then, the conjugation by $(x+d^{-1}g(y),y)$ either changes the form, or leaves it invariant (in case $g(y)\in y^mk[x^{lm}]$). Thus, this gives form {\bf S}.

Subcase $b=1$: we can get the form as in case {\bf M}. It is easy to check that this form cannot be improved by a conjugation within $\B_2(k)$.

\underline{\bf Case $b=1$, $c\not =0$:} We thus can assume $F=(ax+f(y), y+1)$.
We can conjugate by $(x+g(y),y)(ax+f(y), y+1)(x-g(y),y)=(ax+f(y)-ag(y)+g(y+1), y+1)$.
In case $a\not =1$, then the map $E:k[y]\lp k[y]$ given by $g(y)\lp -ag(y)+g(y+1)$ is surjective (as $\deg(E(x^m))=m$).
However, in case
 $a=1$, then we are considering the map $N:f(x)\lp f(x+1)-f(x)$ from lemma \ref{exact1}. We can thus change $f(x)$ by elements of $\im(N)$.  That same lemma shows that a representant system of $k[x]/\im(N)$ is $x^{p-1}k[x^p]$, so we may assume $f(x)$ is in here. We can conjugate by $(dx,y)$ to make sure that $f$ is monic.  We have obtained the form {\bf U}. We claim that this form $(x+y^{p-1}f(y^p),y+1)$ is unique. The argument is similar as before: if one conjugated by
$(dx+g(y), ey+\lambda)$ then $\lambda=0, e=1$ otherwise the form $y+1$ is destroyed, and then
$(dx+g(y),y)$. Conjugating with this either destroys the standard form or leaves it invariant.
\end{proof}

\section{Further research}
\label{Sec6}

We gather up a list of future research questions.

\begin{itemize}
\item For rings $R$ containing $\Q$, determine the conjugacy classes of $\BA_3(R)$ and $\BAA_3(R)$.
Determine the conjugacy classes of $\BA_4(k)$ and $\BAA_4(k)$ where $k$ is a field of characteristic zero.
\item For rings $R$ not containing $\Q$ (i.e. having prime ring $\Z$ or $\Z_n$), determine the conjugacy classes of 
$\BA_2(R)$ and $\BAA_2(R)$. 
\item Find representants of the conjugacy classes which are relatively easy to iterate. In characteristic zero, we can do this by noticing that
$\exp(mD)=F^m$, but in characteristic $p$ we have an open question here for the elements of $\BA_2(k)$, and for 
maximal order maps in $\BA_n(k)$. See also \cite{Mau12}.
\item If $\kar(k)=p$, $F\in \GA_n(k)$ satisfies $F^{p^n}=I$ (i.e. $F$ is unipotent), are there similar theorems as \ref{T1} and \ref{T2} true for this general case? 
\end{itemize}

{\bf Acknowledgement} The author would like to thank  Immanuel Stampfli and J\'er\'emy Blanc for some interesting discussions which originated this paper, 
and in particular Blanc for the first proof of an early version of corollary \ref{exact1}.

\end{document}